\DeclareMathAlphabet{\mathpzc}{OT1}{pzc}{m}{it}
\newcommand{\cab}{\underline{{\mathcal C}}}
\newcommand{\otb}{{\overline{\otimes}}}
\newcommand{\otk}{{\otimes_{\ku}}}
\newcommand{\ku}{{\Bbbk}}
\renewcommand{\_}[1]{\mbox{$_{\left( #1 \right)}$}}
\newcommand{\op}{\rm{op}}
\newcommand{\rev}{\rm{rev}}
\newcommand{\cop}{\rm{cop}}
\newcommand{\no}{{\mathcal N}}
\newcommand{\mo}{{\mathcal M}}
\newcommand{\Kc}{{\mathcal K}}
\newcommand{\nic}{{\mathfrak B}}
\newcommand\Rep{\operatorname{Rep}}
\newcommand{\ele}{{\mathcal L}}
\newcommand{\gr}{\mbox{\rm gr\,}}
\newcommand{\brp}{\mbox{\rm BrPic\,}}
\newcommand{\biga}{\mbox{\rm BiGal\,}}
\newcommand{\diag}{\,\text{diag}}
\newcommand{\Ss}{{\mathcal S}}
\newcommand{\Fc}{{\mathcal F}}
\newcommand{\co}{\rm{co}}
\newcommand{\ca}{{\mathcal C}}
\newcommand{\Do}{{\mathcal D}}
\newcommand\Comod{\operatorname{Comod}}
\newcommand{\uno}{{\mathbf 1}}
\definecolor{verde}{rgb}{0.,0.7,0.}
\definecolor{indigo}{rgb}{.18, .34, .78}
\definecolor{indigo1}{rgb}{.18, .24, .78}
\definecolor{indigo2}{rgb}{.18, .14, .78}
\definecolor{indigo3}{rgb}{.18, 0., .78}
\definecolor{rojo}{rgb}{1,0,0}
\definecolor{negro}{rgb}{0,0,0}
\definecolor{lila}{rgb}{.46, .16, .78}
\definecolor{lila1}{rgb}{.46, .16, .86}
\definecolor{lila2}{rgb}{.56, .16, .86}
	\definecolor{lila3}{rgb}{.63, .16, .78}
\definecolor{lila4}{rgb}{.7, .16, .78}
\definecolor{lila5}{rgb}{.78, .26, .78}
\definecolor{lila6}{rgb}{.6, 0., .78}
\theoremstyle{plain}
\newtheorem{thm}{Theorem}[section]
\newtheorem{claim}[thm]{Claim}
\newtheorem{lma}[thm]{Lemma}
\newtheorem{cor}[thm]{Corollary}
\newtheorem{defn}[thm]{Definition}
\newtheorem{rem}[thm]{Remark}
\newtheorem{prop}[thm]{Proposition}
\newcommand{\qed}{\hfill\quad\fbox{\rule[0mm]{0,0cm}{0,0mm}}  \par\bigskip}
\newcommand{\x}{\mbox{-}}
\newcommand{\iso}{\cong}
\newcommand{\ot}{\otimes}
\newcommand{\ots}{\small{\otimes}}
\newcommand{\C}{{\mathcal C}}
\newcommand{\M}{{\mathcal M}}
\newcommand{\D}{{\mathcal D}}
\newcommand{\F}{{\mathcal F}}
\newcommand{\G}{{\mathcal G}}
\newcommand{\HH}{{\mathcal H}}
\newcommand{\B}{{\mathcal B}}
\def\ul{\underline}
\newcommand{\Ll}{{\mathcal L}}
\newcommand{\Pp}{{\mathcal P}}
\newcommand{\ch}{{\mathpzc h}}
\newcommand{\crta}{\overline}
\newcommand{\id}{\operatorname {id}}
\newcommand{\Hom}{\operatorname {Hom}}
\newcommand{\End}{\operatorname {End}}
\newcommand{\Fun}{\operatorname {Fun}}
\newcommand{\Aut}{\operatorname {Aut}}
\def\N{{\mathbb N}}
\def\Zz{{\mathbb Z}}  
\def\Z{{\mathcal Z}}  
\newcommand{\bt}{\boxtimes}
\newcommand{\inbi}{\operatorname{InnbiGal}}
\newcommand{\Mod}{\operatorname{Mod}}
\newcommand{\cref}[1]{C.~\ref{c:#1}}
\newcommand{\lelabel}[1]{\label{le:#1}}
\newcommand{\leref}[1]{Lemma~\ref{le:#1}}
\newcommand{\eqlabel}[1]{\label{eq:#1}}
\newcommand{\equref}[1]{(\ref{eq:#1})}
\newcommand{\thlabel}[1]{\label{th:#1}}
\newcommand{\thref}[1]{Theorem~\ref{th:#1}}
\newcommand{\delabel}[1]{\label{de:#1}}
\newcommand{\deref}[1]{Definition~\ref{de:#1}}
\newcommand{\prlabel}[1]{\label{pr:#1}}
\newcommand{\prref}[1]{Proposition~\ref{pr:#1}}
\newcommand{\colabel}[1]{\label{co:#1}}
\newcommand{\coref}[1]{Corollary~\ref{co:#1}}
\newcommand{\rmlabel}[1]{\label{rm:#1}}
\newcommand{\rmref}[1]{Remark~\ref{rm:#1}}
\newcommand{\selabel}[1]{\label{se:#1}}
\begin{document}

\title{Invertible bimodule categories over the representation category of a Hopf  algebra}
\author{Bojana Femi\'c $^{a}$, Adriana Mej\'ia Casta\~no $^{b}$ and
Mart\'\i n Mombelli  $^{b}$
 \vspace{6pt} \\
$^{a}${\small Facultad de Ingenier\'ia, \vspace{-2pt}}\\
{\small  Universidad de la Rep\'ublica} \vspace{-2pt}\\
{\small  Julio Herrera y Reissig 565, 11 300 Montevideo, Uruguay}\\
 \vspace{6pt} \\
$^{b}${\small  CIEM-FAMAF, }\\
 {\small  Universidad Nacional de C\'ordoba}\\
 {\small  Medina Allende s/n, 5000, C\'ordoba, Argentina}}

\maketitle
\begin{abstract} For any finite-dimensional Hopf algebra $H$ we construct a 
group homomorphism $\biga(H)\to \text{BrPic}(\Rep(H))$, from the group
of equivalence classes of $H$-biGalois objects to the group of equivalence classes
of invertible exact $\Rep(H)$-bimodule categories. We discuss the injectivity of this map. 
We exemplify in the case $H=T_q$ is a Taft Hopf algebra and for this we classify all exact
indecomposable $\Rep(T_q)$-bimodule categories.
 \bigbreak
\bigbreak
\bigbreak
\bigbreak
{\em Mathematics Subject Classification (2010): 18D10, 16W30, 19D23.}

{\em Keywords: Brauer-Picard group, tensor category, biGalois object.}
\end{abstract}

\section{Introduction}

The Brauer-Picard group $\brp(\C)$ of a finite tensor category $\C$ is the group
 of equivalence classes of invertible exact $\C$-bimodule categories. 
This group, and its higher versions, were introduced in [10] to classify 
extensions of a given tensor category $\C$ by a finite group. Also, it has a 
close relation to certain structures appearing in mathematical physics, 
like rational Conformal Field Theory or 3-dimensional Topological Field Theory,
 see for example [12], [9], [16]. 

In order to classify extensions of a tensor category 
$\C$ by a finite group $G$ one needs a group map $G\to\brp(\C)$ and  certain 
cohomological data, \cite{ENO}. Henceforth, determining any subgroup 
of $\brp(\C)$ presents a significant step in the mentioned classification.

The Brauer-Picard group is a complicated group to compute even in the simplest examples. 
One interesting problem is the computation of $\brp(\Rep(H))$, where $H$ is a finite-dimensional 
Hopf algebra. It is known that any exact $\Rep(H)$-bimodule category is equivalent to the category
 of finite-dimensional representations of a left $H\ot H^{\cop}$-comodule algebra. Given a 
left $H\ot H^{\cop}$-comodule algebra the problem of deciding when its category of representations 
is invertible is not solved. The main difficulty is that the Deligne's tensor product of bimodule 
categories is not easy to compute explicitly.
\medbreak

One of the principle goals of this paper is the description of a certain family of invertible 
exact $\Rep(H)$-bimodule categories coming from $H$-biGalois objects. They give rise to a subgroup
 of $\brp(\Rep(H))$. This result is expressed in \coref{bigal seq}. 
Here we construct a short exact sequence involving a map from the group of
 isomorphism classes of $H$-biGalois objects $\biga(H)$ to $\brp(\Rep(H))$ and its kernel. As a 
consequence we get that for any co-quasitriangular Hopf algebra $H$ the group 
$\biga(H)$ embedds into the Brauer-Picard  group $\brp(\Rep(H))$.\medbreak

The subsequent part of the paper (the Section 5) is dedicated to the study of the
 case when $H$ is the Taft Hopf algebra 
$T_q=\ku\langle g, x\vert g^n=1, x^n=0, gx=qxg\rangle$, where $q$ is a primitive $n$-th 
 root of unity. Our second main goal is to classify all exact indecomposable
 $\Rep(T_q)$-bimodule categories; we obtain five families of them. 
As announced, some families of invertible bimodule
 categories  arise from biGalois objects. 

The approach we use to classify the exact indecomposable $\Rep(T_q)$-bimodule 
categories is the following. Let $H$ be a finite-dimensional Hopf algebra. Any exact 
indecomposable $\Rep(H)$-bimodule category is equivalent to the category of 
finite-dimensional representations of a left $H\ot H^{cop}$-comodule algebra which 
is $H\ot H^{cop}$-simple (it has no non-trivial ideals which are simultaneously left 
$H\ot H^{cop}$-comodules) and with trivial coinvariants. So it is enough to find all 
comodule algebras over $H\ot H^{cop}$ with these properties. 
By \cite{Sk} any coideal subalgebra $A$ of $H$ is $H$-simple,
 and due to \cite[Remark 3.2]{M1}
the representation category of $A$ twisted by a {\em compatible} 2-cocycle over 
$H$ is an exact indecomposable $\Rep(H)$-bimodule category. Morevover, the \emph{liftings} 
of cocycle twisted coideal subalgebras are as well $H$-simple comodule algebras 
with trivial coinvariants. Here, a \emph{lifting} of a coideal subalgebra $K$ is a comodule algebra
 $A$ such that $\gr A\simeq K$. Then setting $H=T_q\ot T_q^{cop}$, we determine all
 homogeneous coideal subalgebras of $H$ and their 2-cocycle twists. 
We determine the liftings of them and get five families of $H$-comodule algebras.
 By the above, the representation categories of these five families are exact 
indecomposable $\Rep(T_q)$-bimodule categories. In \thref{ex-modcat} 
we prove that every exact indecomposable $\Rep(T_q)$-bimodule category is of this form.
 This classification result is interesting in itself. The biGalois objects over $T_q$ 
arise from one of the five families of the above comodule algebras and from their
 form it is straightforward that the kernel of the map $\biga(T_q)\to\brp(\Rep(T_q))$
 is trivial. Hence, the  group $\biga(T_q)$ embedds into $\brp(\Rep(T_q))$, 
although $T_q$ is not co-quasitriangular.\medbreak

The contents of the paper are the following. In Section 2 we give 
the necessary preliminaries on tensor categories and their representations. 
We also prove that for Galois objects over Hopf algebras, the tensor product
 of bimodule categories can be given in an explicit form. In Section 3 we recall 
some basic notions on bicategories that we use later. We also give a proof of a
known result; that the bicategory of representations of a given tensor category
 determines the tensor category up to Morita equivalence. In Section 4 we present a 
group homomorphism $\biga(H) \to \brp(\Rep(H))$. The major part of this section is dedicated 
to describe its kernel as well as possible. For co-quasitriangular Hopf algebras this
 map is always injective. In Subsection 5.1 we compute the 2-cocycle twists of 
$H= T_q\ot T_q^{cop}$. In the next subsection we find all homogeneous coideal 
subalgebras of $H$. In Subsection 5.3 we introduce five families of $H$-comodule
 algebras that are $H$-simple and with trivial coinvariants which turn out to be
 liftings of the coideal subalgebras. In Subsection 5.4 we classify all exact 
indecomposable $\Rep(T_q)$-bimodule categories and prove that they all 
come from the above five families of comodule algebras. We also determine 
the biGalois objects over $T_q$. The last subsection is dedicated to the explicit 
embedding of $\ku^{\times} \ltimes\ku^{+}\simeq\biga(T_q)$ into $\brp(\Rep(T_q))$.

\section{Preliminaries and Notation}\selabel{prelimi}

We shall work over an algebraically closed field $\ku$ of characteristic zero. If $G$ is a finite
group and $\psi\in Z^2(G,\ku^{\times})$ is a 2-cocycle there is another 2-cocycle $\psi'$
in the same cohomology class as $\psi$ such that
\begin{equation}\eqlabel{2-cocyclo-g}
 \psi'(g,1)=\psi'(1,g)=1, \quad \psi'(g,g^{-1})=1, \quad \psi'(g,h)^{-1}=\psi'(h^{-1}, g^{-1}),
\end{equation}
for all $g,h\in G$. 
\medbreak

All vector spaces and algebras are assumed to be over $\ku$.  We denote by $vect_\ku$ the category of
finite-dimensional $\ku$-vector spaces. If $A$ is an algebra
we shall denote by ${}_A\mo$ the category of finite-dimensional left
$A$-modules.

If $C$ is a coalgebra and $V$ is a right $C$-comodule with comodule structure
$\rho_V:V\to V\otk C$
and $W$ is a left $C$-comodule with comodule structure $\lambda_W:W\to C\otk W$, we denote 
 by 
$W^{\co C}=\{w\in W: \lambda_W(w)=1\ot w\}$ the set of (left) coinvariants of $W$  
and $V\Box_C W$ is the equalizer 
of the arrows $\rho_V\ot\id, \id\ot  \lambda_W: V\otk W\to  V\otk C\otk W$.

Let $H$ be a finite-dimensional Hopf algebra. We denote by
$G(H)$ the group of group-like elements in $H$. We shall denote by $\Rep(H)$ the tensor category
of finite dimensional left $H$-modules and $\Comod(H)$ the tensor category
of finite dimensional left $H$-comodules. For the basic notions and definitions of 
Hopf Galois extensions over a Hopf algebra the reader is referred to \cite{S2}.

\subsection{ Hopf algebras and comodule algebras}

Given a coradically graded Hopf algebra $H=\oplus_{i=0}^m H(i)$  we say that
a left coideal subalgebra $K\subseteq H$ is \emph{homogeneous}
if it is a graded algebra  $K=\oplus_{i=0}^m K(i)$ such that
$K(i)\subseteq H(i)$.

Let  $H$ be a finite-dimensional Hopf algebra. If $(K, \lambda)$ is a left $H$-comodule algebra
we denote by $\overline{K}$ the right $H$-comodule algebra with opposite underlying
algebra $K^{\op}$ and coaction $\overline{\lambda}: K\to   K\otk H$ given by
$$\overline{\lambda}(k)=k\_{0}\ot \Ss^{-1}(k\_1), \quad \text{ for all } k\in K.$$

If $L, K$ are right $H$-comodule algebras, we denote by ${}_L\mo^H_{K}$ the
category of $(L,K)$-bimodules with a right $H$-comodule structure such that 
it is a morphism of $(L,K)$-bimodules. If
$L, K$ are left $H$-comodule algebras the category ${}_L^H\mo_{K}$ can be defined
similarly. 

 An $H$-comodule algebra is said to be {\em $H$-simple } 
if it has no non-trivial $H$-costable ideals. 

\subsection{Twisting Hopf algebras}

Let $H$ be a Hopf algebra. Let us recall that a Hopf 2-cocycle for $H$ is a  map $\sigma: H\otk H\to
\ku$, invertible with respect to convolution,  such that
\begin{align}\label{2-cocycle}
\sigma(x\_1, y\_1)\sigma(x\_2y\_2, z) &= \sigma(y\_1,
z\_1)\sigma(x, y\_2z\_2),
\\
\label{2-cocycle-unitario} \sigma(x, 1) &= \varepsilon(x) =
\sigma(1, x),
\end{align}
for all $x,y, z\in H$. Using this cocycle there is a new Hopf
algebra structure constructed over the same coalgebra $H$ with the
product described by
\begin{equation} \eqlabel{sigma-mult}
x\cdot_{\sigma}y=\sigma(x_{(1)}, y_{(1)})\sigma^{-1}(x_{(3)}, y_{(3)})x_{(2)} y_{(2)}, \quad x,y\in H
\end{equation}
This new Hopf algebra is denoted by  $H^{[\sigma]}$.  If $\sigma: H\otimes H\to \ku$ is a Hopf 2-cocycle and $A$ is a left
$H$-comodule algebra, then there is a new product in $A$ given by
\begin{align}\label{sigma-product} a\cdot_{\sigma}b = \sigma(a_{(-1)},
b_{(-1)})\, a_{(0)}\cdot b_{(0)},
\end{align}
$a,b\in A$. We shall denote by $A_{\sigma}$ this new algebra.
The algebra $A_{\sigma}$ is a left $H^{[\sigma]}$-comodule
algebra.

\medbreak

Let $H$ be a coradically graded pointed Hopf algebra with coradical
$H_0=\ku G$. Let $\psi\in Z^2(G,\ku^{\times})$. For the proof of the next result see \cite[Lemma 4.1]{GM}.
\begin{lma}\lelabel{lift-twist} There exists a Hopf 2-cocycle
$\sigma_\psi:H\otk H\to \ku$ such that for any homogeneous elements $x, y\in H$
\begin{align}\label{lift-twist-def}
\sigma_\psi(x,y)=\begin{cases}
              \psi(x,y), & \text{if } x,y\in H(0);\\
0, &\text{otherwise.}
               \end{cases}\end{align}\qed
\end{lma}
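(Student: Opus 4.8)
The plan is to realize $\sigma_\psi$ as the pull-back of $\psi$ along the canonical projection of $H$ onto its degree-zero component, exploiting that the Hopf $2$-cocycle conditions \eqref{2-cocycle}--\eqref{2-cocycle-unitario}, and convolution-invertibility, are entirely bialgebra-level requirements (the antipode never enters).

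First I would consider the graded projection
\[
\pi\colon H\longrightarrow \ku G,\qquad \pi|_{H(0)}=\id_{\ku G},\quad \pi|_{H(i)}=0\ \ (i\ge 1),
\]
and check that it is a morphism of bialgebras. That $\pi$ is an algebra map is immediate from $H(i)H(j)\subseteq H(i+j)$ and $1\in H(0)$; that it is a coalgebra map follows from $\Delta(H(n))\subseteq\bigoplus_{i+j=n}H(i)\ot H(j)$ together with $\varepsilon(H(i))=0$ for $i\ge 1$. (Equivalently: $H^{+}=\bigoplus_{i\ge 1}H(i)$ is a Hopf ideal of the coradically graded Hopf algebra $H$, and $\pi$ is the composite $H\twoheadrightarrow H/H^{+}\xrightarrow{\ \sim\ }H(0)=\ku G$.) I stress that $\pi$ need not be required to commute with the antipode.

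Next I would produce a Hopf $2$-cocycle on $\ku G$ out of $\psi$. Replacing $\psi$ by a cohomologous normalized representative (cf.\ \equref{2-cocyclo-g}; this does not affect the existence assertion) and extending it bilinearly to $\psi\colon\ku G\otk\ku G\to\ku$, one checks that $\psi$ is a Hopf $2$-cocycle on $\ku G$: both sides of \eqref{2-cocycle} are trilinear, so it suffices to evaluate on group-like elements, where \eqref{2-cocycle} and \eqref{2-cocycle-unitario} collapse verbatim to the group $2$-cocycle identity and the normalization of $\psi$; moreover the bilinear extension of $(g,h)\mapsto\psi(g,h)^{-1}$ is a convolution inverse.

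Finally I would set $\sigma_\psi=\psi\circ(\pi\ot\pi)\colon H\otk H\to\ku$ and verify the statement. Since $\pi$ is a coalgebra map, $\psi^{-1}\circ(\pi\ot\pi)$ is a two-sided convolution inverse of $\sigma_\psi$. For \eqref{2-cocycle-unitario}: $\sigma_\psi(x,1)=\psi(\pi(x),1)=\varepsilon(\pi(x))=\varepsilon(x)$, and symmetrically on the other side. For \eqref{2-cocycle}: writing $a=\pi(x)$, $b=\pi(y)$, $c=\pi(z)$ and using that $\pi$ is multiplicative and comultiplicative, each side of \eqref{2-cocycle} for $\sigma_\psi$ at $(x,y,z)$ turns into the corresponding side of the $2$-cocycle identity for $\psi$ at $(a,b,c)$, and these agree by the previous step. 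Finally, for homogeneous $x,y$: if $x,y\in H(0)$ then $\sigma_\psi(x,y)=\psi(x,y)$, while if $x$ or $y$ has positive degree then $\pi(x)=0$ or $\pi(y)=0$, so $\sigma_\psi(x,y)=0$; this is precisely \eqref{lift-twist-def}. I do not expect a genuine obstacle: the only delicate point is that $\pi$ be simultaneously an algebra and a coalgebra homomorphism, since the term $x\_2 y\_2$ in \eqref{2-cocycle} forces the use of multiplicativity while convolution-invertibility and the reduction of \eqref{2-cocycle} to the group identity use comultiplicativity.
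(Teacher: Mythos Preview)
Your argument is correct. The paper itself does not supply a proof of this lemma: it simply refers the reader to \cite[Lemma~4.1]{GM} and places a \qed\ after the statement. Your approach---pulling back the (normalized) group $2$-cocycle $\psi$ along the graded bialgebra projection $\pi\colon H\twoheadrightarrow H(0)=\ku G$---is the standard one, and the verification you sketch goes through without difficulty once one knows that a coradically graded Hopf algebra is in particular a \emph{graded} Hopf algebra, so that $\pi$ is indeed both an algebra and a coalgebra map. One small remark: the formula \eqref{lift-twist-def} forces $\sigma_\psi(1,1)=\psi(1,1)$, while \eqref{2-cocycle-unitario} forces $\sigma_\psi(1,1)=1$; hence the lemma as stated tacitly assumes $\psi$ is normalized, which is exactly the adjustment you make via \equref{2-cocyclo-g}. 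This is harmless for all the later applications in the paper, where only the cohomology class of $\psi$ matters.
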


\subsection{Relative Hopf modules}

Let $H$ be a finite-dimensional Hopf algebra. Let $K$  be
a  left  $H$-comodule algebra and $L$ a  right  $H$-comodule algebra. Define the functors
$$F:{}_{L\Box_H K}\mo \to {}_L\mo^H_{\overline{K}}, \quad G:{}_L\mo^H_{\overline{K}}
\to {}_{L\Box_H K}\mo$$
as
$$F(M)= (L\otk K)\ot_{L\Box_H K} M, \quad G(N)=N^{\co H},$$
for all $M\in {}_{L\Box_H K}\mo, N\in  {}_L\mo^H_{\overline{K}}$.
\begin{thm}\thlabel{relative-hopf}\cite[Thm. 4.2]{CCMT}  If $L$ is a Hopf-Galois  extension then
the pair of functors $(F,G)$ gives an equivalence of categories.\qed
\end{thm}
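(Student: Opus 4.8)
To prove this the plan is to produce the unit and counit of an adjunction between $F$ and $G$ and to use the Hopf--Galois hypothesis on $L$ to show both are isomorphisms, reducing the problem to the classical structure theorem for relative Hopf modules over the Hopf--Galois extension $B:=L^{\co H}\subseteq L$.

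The first step is to check that $F$ and $G$ are well defined. On $F(M)=(L\otk K)\ot_{L\Box_H K}M$ one declares the left $L$-action to be left multiplication on the tensorand $L$, the right $\overline K$-action to be multiplication on the tensorand $K$, and the right $H$-coaction to be $\id_L\ot\overline\lambda$ on the tensorand $K$; the defining equalizer condition of $L\Box_H K$ is exactly what makes the coaction descend to the balanced tensor product and be compatible with the other two structures, so that $F(M)\in{}_L\mo^H_{\overline K}$. Dually, $G(N)=N^{\co H}$ is a left $L\Box_H K$-module for the action $(l\ot k)\cdot n:=l\cdot n\cdot k$, and here the equalizer condition together with the antipode identities for $H$ are what guarantee that this action preserves $N^{\co H}$; functoriality of $F$ and $G$ is then immediate.

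Next I would write down the unit $\eta_M\colon M\to GF(M)=F(M)^{\co H}$, $m\mapsto(1_L\ot1_K)\ot m$, and the counit $\varepsilon_N\colon FG(N)=(L\otk K)\ot_{L\Box_H K}N^{\co H}\to N$, $(l\ot k)\ot n\mapsto l\cdot n\cdot k$, and verify that these are morphisms in ${}_{L\Box_H K}\mo$ and in ${}_L\mo^H_{\overline K}$ respectively --- another direct computation with the equalizer condition and the comodule-algebra axioms. To see they are isomorphisms I would forget the $\overline K$-structure and invoke the two standard consequences of $L$ being a faithfully flat $H$-Galois extension of $B$: bijectivity of the canonical map $L\ot_BL\to L\otk H$, which produces a natural isomorphism between the underlying relative Hopf module of $F(M)$ and $L\ot_BM$ (with coaction on the factor $L$); and the structure theorem, which says that $L\ot_B(-)$ and $(-)^{\co H}$ are mutually inverse equivalences between ${}_B\mo$ and ${}_L\mo^H$. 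Together these give natural isomorphisms $GF(M)=F(M)^{\co H}\cong(L\ot_BM)^{\co H}\cong M$ and $FG(N)\cong L\ot_BN^{\co H}\cong N$, and one checks that they agree with $\eta_M$ and $\varepsilon_N$; hence $(F,G)$ is an equivalence.

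I expect the main obstacle to be the bookkeeping of the extra right $\overline K$-module and right $H$-comodule structures through the one-sided Hopf--Galois equivalence: in particular one must track where the antipode $S^{\pm1}$ enters when translating between the coaction of $L$ that is built into $L\Box_H K$ and the coaction $\overline\lambda$ of $\overline K$ used on $F(M)$, and one must confirm that the isomorphisms produced abstractly from the structure theorem coincide with the explicit maps $\eta$ and $\varepsilon$. The Hopf--Galois hypothesis on $L$ enters only to secure bijectivity of the canonical map and faithful flatness of $L$ over $B$, hence the one-sided structure theorem; everything else is a diagram chase driven by the equalizer condition defining $L\Box_H K$.
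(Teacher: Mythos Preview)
The paper does not supply its own proof of this statement: the theorem is stated with a citation to \cite[Thm.~4.2]{CCMT} and closed immediately with \qed. So there is nothing in the paper to compare your argument against; your outline is simply a sketch of the proof that the paper chose to import wholesale from the literature.

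That said, your strategy is the expected one and matches what Caenepeel--Crivei--Marcus--Takeuchi do: exhibit $(F,G)$ as an adjoint pair via the obvious unit and counit, then use the Hopf--Galois hypothesis on $L$ to identify $F(M)$ with $L\ot_B M$ (for $B=L^{\co H}$) and invoke the one-sided structure theorem for relative Hopf modules. One small slip worth flagging: the right $H$-coaction you put on $F(M)$, namely $\id_L\ot\overline\lambda$ acting on the $K$-tensorand only, is not compatible with the left $L$-module structure required in ${}_L\mo^H_{\overline K}$, since for that one needs $\rho(l'\cdot n)=l'_{(0)}\cdot n_{(0)}\ot l'_{(1)}n_{(1)}$. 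The correct coaction on $L\otk K$ is the diagonal one, $l\ot k\mapsto l_{(0)}\ot k_{(0)}\ot l_{(1)}\,\Ss^{-1}(k_{(-1)})$, or equivalently the one coming from $L$ alone; the cotensor condition defining $L\Box_H K$ is then exactly what makes this descend to the balanced tensor product and commute with the right $L\Box_H K$-action. With that correction your sketch goes through, and the bookkeeping you anticipate (tracking the antipode and matching the abstract structure-theorem isomorphisms with your explicit $\eta$ and $\varepsilon$) is indeed where the work lies.
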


\begin{lma}\lelabel{equi-rel} There is an equivalence of categories
${}_L\mo^H_{\overline{K}}\simeq  {}_K^H\mo_{\overline{L}}.$
\end{lma}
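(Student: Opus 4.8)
The plan is to construct the equivalence "by hand" as an identity-on-underlying-spaces functor that reshuffles the module and comodule structures, and then check it is well-defined and invertible. Given an object $M \in {}_L\mo^H_{\overline{K}}$, that is, a $(L,\overline{K})$-bimodule with a right $H$-coaction $\rho_M(m) = m\_0 \ot m\_1$ that is a bimodule morphism, I would define a new object $\widetilde{M}$ with the same underlying vector space $M$, equipped with: the left $K$-action $k \cdot m := m \cdot k$ (using the opposite-algebra identity $\overline{K} = K^{\op}$, so a right $\overline{K}$-action is exactly a left $K$-action), the right $\overline{L}$-action $m \cdot l := l \cdot m$ (same reasoning), and the left $H$-coaction $\lambda_{\widetilde M}(m) := \Ss^{-1}(m\_1) \ot m\_0$. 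The antipode $\Ss$ of a finite-dimensional Hopf algebra is bijective, so $\Ss^{-1}$ makes sense; this is exactly the move that appears in the definition of $\overline{K}$ in the preliminaries, where the right coaction $\overline{\lambda}(k) = k\_0 \ot \Ss^{-1}(k\_1)$ converts a left comodule algebra into a right one.

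The steps, in order, are: (1) verify $\lambda_{\widetilde M}$ is a coassociative, counital left $H$-coaction — coassociativity follows from coassociativity of $\rho_M$ together with the anti-comultiplicativity of $\Ss$ (hence of $\Ss^{-1}$), $\Delta \circ \Ss^{-1} = (\Ss^{-1} \ot \Ss^{-1}) \circ \tau \circ \Delta$, and counitality is immediate from $\varepsilon \circ \Ss^{-1} = \varepsilon$; (2) verify that $\lambda_{\widetilde M}$ is a morphism of $(K, \overline{L})$-bimodules, i.e. compatible with the new actions where $K$ and $\overline{L}$ carry their own $H$-coactions — here the coaction on $K$ as a left $H$-comodule algebra must be matched against the right coaction on $\overline{K}$ via $\Ss^{-1}$, and symmetrically for $\overline{L}$ versus $L$, so the compatibility of $\rho_M$ with the $\overline{K}$- and $L$-structures of $M$ translates precisely into the required compatibility after applying $\Ss^{-1}$ and flipping; (3) check that this assignment is functorial (a morphism in ${}_L\mo^H_{\overline{K}}$ is simultaneously left $L$-linear, right $\overline{K}$-linear and $H$-colinear, and these are literally the same conditions as being right $\overline{L}$-linear, left $K$-linear and colinear for the new structures), so we get a functor $\Phi : {}_L\mo^H_{\overline{K}} \to {}_K^H\mo_{\overline{L}}$; (4) construct the inverse $\Psi$ by the analogous recipe using $\Ss$ in place of $\Ss^{-1}$ (i.e. $\rho_{\Psi(N)}(n) = n\_0 \ot \Ss(n\_{-1})$), and check $\Psi \Phi = \Id$, $\Phi \Psi = \Id$ on the nose, which reduces to $\Ss^{-1}\Ss = \Ss\Ss^{-1} = \id$.

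I expect the only real bookkeeping obstacle to be step (2): carefully writing out the two bimodule-comodule compatibility axioms for $M$ (one involving $\overline{K}$, one involving $L$), recalling the precise comodule-algebra coactions on $\overline{K}$ and on $L$ that enter the definition of ${}_L\mo^H_{\overline{K}}$, and confirming that conjugating everything by $\Ss^{\pm 1}$ and transposing tensor factors turns these into the compatibility axioms for ${}_K^H\mo_{\overline{L}}$ — this is a routine but slightly fiddly diagram chase using only the Hopf-algebra axioms for $\Ss$ and its inverse. No analytic or structural input beyond finite-dimensionality (to guarantee $\Ss$ is invertible) is needed, and in particular \thref{relative-hopf} is not required here; this lemma is purely formal and will be used in tandem with it later.
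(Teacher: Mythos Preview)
Your proposal is correct and coincides with the paper's own argument: the paper defines the equivalence $I$ on underlying spaces by $I(M)=M$, turns the right $H$-coaction $\delta(m)=m\_0\ot m\_1$ into the left coaction $\widehat{\delta}(m)=\Ss^{-1}(m\_1)\ot m\_0$, and leaves the (routine) verifications that $I$ is well-defined and an equivalence to the reader. Your steps (1)--(4) are exactly the checks the paper omits, and your inverse via $\Ss$ is the expected one.
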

\begin{proof} Define the functor $I: {}_L\mo^H_{\overline{K}}\to {}_K^H\mo_{\overline{L}}$
 by $I(M)=M$. If $\delta:M\to M\otk H$, $\delta(m)=m\_0\ot m\_1$, $m\in M$, is the comodule structure then the left $H$-comodule
structure on $I(M)$ is given by
$$\widehat{\delta}: M\to H\otk M, \quad \widehat{\delta}(m)= \Ss^{-1}(m\_1)\ot m\_0,$$
for all $m\in M$. It is not difficult to prove that this functor is well-defined and
gives an equivalence of categories. \qed
\end{proof}

\subsection{Tensor categories, their representations  and the Brauer-Picard group }

 A \emph{tensor category} over $\ku$ is a $\ku$-linear Abelian rigid monoidal category
with $\ku$-bilinear tensor product.
Hereafter all tensor categories will be assumed to be over a field $\ku$. A
\emph{finite  category}  is an Abelian $\ku$-linear category such that it is equivalent
to the category of finite-dimensional representations of a finite-dimensional $\ku$-algebra.
A  \emph{finite tensor category} \cite{eo} is a tensor category with finite underlying Abelian
category such that the unit object is simple. All  functors will be assumed to be
$\ku$-linear and all categories will be finite.
\medbreak

If $\ca$ is a tensor category, we shall denote by $\ca^{\rev}$ the tensor category whose
underlying Abelian  category is $\ca$ and  the reversed  tensor product:
$X\ot^{\rev} Y= Y\ot X, \quad X, Y\in \ca.$
The associativity of $\ca^{\rev}$ is given by
$a^{\rev}_{X,Y,Z}=a^{-1}_{Z,Y,X}$ for $X, Y, Z\in \ca.$

\medbreak

For the definition of left and right module categories over a tensor category we refer
to \cite{eo}.
Let $\ca, \Do$ be finite tensor categories. For the definition of a $(\ca, \Do)$-\emph{bimodule category}
we refer to \cite{Gr}, \cite{ENO}. In few words a $(\ca, \Do)$-bimodule category
is the same as left $\ca\boxtimes \Do^{\rev}$-module category.  Here $\boxtimes$ 
denotes the Deligne tensor product of two finite abelian categories. 

A $(\ca, \Do)$-bimodule category is \emph{decomposable} if it is the direct sum of two non-trivial
$(\ca, \Do)$-bimodule categories. A $(\ca, \Do)$-bimodule category is  \emph{indecomposable}
if it is not decomposable. A $(\ca, \Do)$-bimodule category is \emph{exact}
if it is exact as a left $\ca\boxtimes \Do^{\rev}$-module category,  \cite{ENO}, \cite{Gr}.
\medbreak

 If $\ca_1, \ca_2, \ca_3$ are tensor categories and $\mo$ is a $(\ca_1,\ca_2)$-bimodule
 category and $\no$ is a $(\ca_2,\ca_3)$-bimodule
category,  the tensor product over $\ca_2$ is denoted by $\mo\boxtimes_{\ca_2}\no$. 
This category is a $(\ca_1,\ca_3)$-bimodule category. For more details on the tensor product of module categories 
the reader is referred to \cite{ENO}, \cite{Gr}.

\medbreak

If $\mo$ is a  right $\ca$-module category then $\mo^{\op}$ denotes the
 opposite  Abelian category with left $\ca$ action $\ca\times\mo^{\op} \to \mo^{\op}$, 
$ (X,M)\mapsto  M\otb X^*$ and associativity
 isomorphisms $m^{\op}_{X,Y,M}= m_{M,Y^*, X^*}$ for all $X, Y\in \ca, M\in \mo$. Similarly, if  $\mo$ is a
   left $\ca$-module category. If  $\mo$ is a $(\ca,\Do)$-bimodule category then $\mo^{\op}$ is a
$(\Do,\ca)$-bimodule category. See \cite[Prop. 2.15]{Gr}.

\medbreak

A $(\ca, \Do)$-bimodule category $\mo$ is called \emph{invertible} \cite{ENO} if
there are  equivalences of bimodule categories
$$\mo^{\op}\boxtimes_{\ca} \mo\simeq \Do, \quad \mo\boxtimes_{\Do} \mo^{\op}\simeq \ca.$$

Tensor categories $\ca$ and $\Do$ are said to be {\em Morita equivalent} if  there exists an indecomposable exact left
$\ca$-module category $\mo$ and a tensor equivalence $\Do^{\rev} \simeq  \End_\ca(\mo)$.
\medbreak
The following result seems to be well-known.
\begin{lma}\label{m-inv} Let $\ca, \Do$  be tensor categories. The following statements are
equivalent.
\begin{itemize}
 \item[1.] The categories $\ca$ and $\Do$ are Morita equivalent;

 \item[2.] there exists an invertible $(\ca, \Do)$-bimodule category.

\end{itemize}

\end{lma}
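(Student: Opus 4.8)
The plan is to establish the two implications separately, relying on the standard dictionary between module categories and the internal structure of tensor categories.

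\medbreak

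\textbf{From Morita equivalence to an invertible bimodule category.} Suppose $\ca$ and $\Do$ are Morita equivalent, witnessed by an indecomposable exact left $\ca$-module category $\mo$ together with a tensor equivalence $\Do^{\rev}\simeq\End_\ca(\mo)$. Since $\End_\ca(\mo)$ acts on $\mo$ on the right, transporting along this equivalence makes $\mo$ a right $\Do^{\rev}$-module category, equivalently a left $\Do$-module category, and these two actions commute up to coherent isomorphism so that $\mo$ becomes a $(\ca,\Do)$-bimodule category. The claim is then that $\mo$ is invertible. For this I would invoke the general fact (from \cite{ENO}) that for an exact indecomposable left $\ca$-module category $\mo$ one always has $\mo^{\op}\boxtimes_{\ca}\mo\simeq\End_\ca(\mo)^{\rev}$ and $\mo\boxtimes_{\End_\ca(\mo)}\mo^{\op}\simeq\ca$ as bimodule categories; rewriting $\End_\ca(\mo)^{\rev}\simeq\Do$ and $\End_\ca(\mo)\simeq\Do^{\rev}$ gives exactly the two equivalences $\mo^{\op}\boxtimes_\ca\mo\simeq\Do$ and $\mo\boxtimes_\Do\mo^{\op}\simeq\ca$ required by the definition of invertibility.

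\medbreak

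\textbf{From an invertible bimodule category to Morita equivalence.} Conversely, let $\mo$ be an invertible $(\ca,\Do)$-bimodule category, so $\mo^{\op}\boxtimes_\ca\mo\simeq\Do$ and $\mo\boxtimes_\Do\mo^{\op}\simeq\ca$. Viewing $\mo$ as a left $\ca$-module category, I first need that it is exact and indecomposable: exactness is part of the definition of a bimodule category here (it is exact as a left $\ca\boxtimes\Do^{\rev}$-module category, which forces exactness over $\ca$), and indecomposability follows because a decomposition of $\mo$ as a $\ca$-module category would survive the tensor products and contradict that $\mo^{\op}\boxtimes_\ca\mo\simeq\Do$ has simple unit / is indecomposable as a $\Do$-bimodule category. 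It then remains to produce a tensor equivalence $\Do^{\rev}\simeq\End_\ca(\mo)$. The right $\Do$-action on $\mo$ supplies a tensor functor $\Do^{\rev}\to\End_\ca(\mo)$ (each object of $\Do$ gives a $\ca$-module endofunctor of $\mo$ by acting on the right, and the monoidal structure is reversed because composition of right actions reverses order). Invertibility of $\mo$ is exactly what makes this functor an equivalence: the relation $\mo^{\op}\boxtimes_\ca\mo\simeq\Do$ together with $\mo\boxtimes_\Do\mo^{\op}\simeq\ca$ lets one identify $\End_\ca(\mo)\simeq\mo^{\op}\boxtimes_\ca\mo\simeq\Do$ as tensor categories up to the appropriate reversal, which is the statement that $\Do^{\rev}\simeq\End_\ca(\mo)$.

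\medbreak

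\textbf{Main obstacle.} The routine parts are the manipulations of $\boxtimes_\ca$ and the bookkeeping of which action is ``reversed''. The genuine content, and the step I expect to require the most care, is the identification $\End_\ca(\mo)\simeq\mo^{\op}\boxtimes_\ca\mo$ (as tensor categories) for $\mo$ exact indecomposable, and the compatibility of this identification with the invertibility equivalences — i.e. checking that the tensor functor $\Do^{\rev}\to\End_\ca(\mo)$ induced by the right action really is the one that the equivalence $\mo^{\op}\boxtimes_\ca\mo\simeq\Do$ inverts. This is where one uses the full strength of exactness (so that internal Homs exist and $\boxtimes_\ca$ behaves well) and the results on tensor products of module categories from \cite{ENO}, \cite{Gr}. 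Once that identification is in place, both implications close up immediately.
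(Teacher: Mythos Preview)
Your proposal is correct and follows the same circle of ideas as the paper, with the difference being mainly one of packaging. The paper dispatches $(2)\Rightarrow(1)$ in one line by citing \cite[Proposition 4.2]{ENO}, whereas you unpack that argument (exactness and indecomposability of $\mo$ over $\ca$, then showing the right-action functor $\Do^{\rev}\to\End_\ca(\mo)$ is an equivalence). For $(1)\Rightarrow(2)$ the paper endows $\mo$ with a right $\Do$-action via $\Phi$ exactly as you do, but then concludes invertibility by invoking the characterization (again implicit in \cite[Proposition 4.2]{ENO}) that $\mo$ is invertible as soon as the induced functor $\Do^{\rev}\to\End_\ca(\mo)$ is an equivalence of $(\Do,\Do)$-bimodule categories; since this functor is $\Phi$ by construction, the conclusion is immediate. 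You instead verify the two defining equivalences $\mo^{\op}\boxtimes_\ca\mo\simeq\Do$ and $\mo\boxtimes_\Do\mo^{\op}\simeq\ca$ directly via the identification $\mo^{\op}\boxtimes_\ca\mo\simeq\End_\ca(\mo)$ from \cite{ENO}, \cite{Gr}. These two routes are equivalent --- indeed the characterization the paper uses is \emph{proved} by exactly the identification you invoke --- so your ``main obstacle'' is well placed: it is precisely the content the paper hides behind the citation.
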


\begin{proof} (2) implies (1) is part of \cite[Proposition 4.2]{ENO}.
 Now, let us assume that the tensor categories $\ca,$ $\Do$ are Morita equivalent. Let
$\Phi:\Do^{\rev} \xrightarrow{\; \simeq  \;} \End_\ca(\mo)$ be a tensor
equivalence. Since $\mo$ is an
 indecomposable exact left $\End_\ca(\mo)$-module category then it is
an indecomposable exact right $\Do$-module category. The right $\Do$-action is
given as follows:
$$ \mo \times\Do\to \mo, \quad  M\otb Y= \Phi(Y)(M),$$
for all $M\in \mo, Y\in \Do$.
 It is easy to prove that
$\mo$ is an exact $(\ca,\Do)$-bimodule category. The functor $\Phi$ is an equivalence
of $(\Do,\Do)$-bimodule categories. Thus $\mo$ is an invertible $(\ca, \Do)$-bimodule category.
\qed\end{proof}

Given a finite tensor category $\ca$, the \emph{Brauer-Picard group} $\text{BrPic}(\ca)$
 of $\ca$ \cite{ENO} is the group of equivalence classes
of invertible exact $\ca$-bimodule categories. This group does not
depend on the Morita class of the tensor category; if $\Do$ is another tensor
category Morita equivalent to $\ca$ there is an isomorphism
$\text{BrPic}(\ca)\simeq \text{BrPic}(\Do)$. Let us explain this isomorphism. Let
$\mo$ be an invertible $(\ca, \Do)$-bimodule category. Define
\begin{equation}\eqlabel{iso-brpic-dual} \Phi: \text{BrPic}(\ca)\to \text{BrPic}(\Do), \Phi([\no])= [\mo^{\op}
\boxtimes_{\ca}\no\boxtimes_{\ca}\mo],
\end{equation}
for all $[\no]\in  \text{BrPic}(\ca)$. Here $[\no]$ denotes the equivalence
class of the module category $\no$.


\subsection{ Generating some invertible bimodule categories } \selabel{autoeq}

There is a natural way to construct invertible $\ca$-bimodule categories
using tensor autoequivalences. See for example \cite{ENO}, \cite{Ga}.
Let $\ca, \Do$ be finite tensor categories and   $(F,\xi):\ca\to \ca$, $(G,\zeta):\Do\to \Do$
be tensor autoequivalences. If $\mo$ is a $(\ca, \Do)$-bimodule category
we shall denote by ${}^F\mo^G$ the following $(\ca, \Do)$-bimodule category.
The category ${}^F\mo^G$ has underlying Abelian category equal to $\mo$.
The left and right actions are given by
$$X\otb M= F(X)\otb M, \quad M \otb Y = M\otb G(Y),  $$
for all $X\in\ca, Y\in \Do$, $M\in\mo$. The left and right associativity
$$m^F_{X,Y,M}= m^l_{X,Y,M} (\xi_{X,Y}\ot\id_M),\quad m^G_{M,X,Y}=m^r_{M,X,Y} (\id_M\ot \zeta_{X,Y}). $$
Here $m^l$ (resp.  $m^r$) is the left (resp. the right)  associativity constraint of $\mo$.
If $G$ is the identity functor we shall denote ${}^F\mo^G$ simply by
${}^F\mo$ and if $F$ is the identity  we shall denote ${}^F\mo^G$  by
$\mo^G$. Let $\Aut(\ca)$ denote the group of tensor autoequivalences of $\ca$.

\begin{lma}\lelabel{bimod-ex} Let $F,G\in \Aut(\ca)$ and  let  
$\mo$ be a $\ca$-bimodule category. The following statements hold.
\begin{itemize}
 \item[1.] There are equivalences of bimodule categories
$\ca^F\boxtimes_{\ca} \ca^G\simeq \ca^{FG}$. In particular $\ca^F$ is invertible.
\item[2.] There are equivalences of bimodule categories
$$\mo
\boxtimes_{\ca} \ca^ F\simeq \mo^F, \quad \big(\mo^{\op} \big)^F\simeq \big({}^F\mo \big)^{\op}. $$

\end{itemize}

\end{lma}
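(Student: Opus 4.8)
The plan is to prove both statements by directly unwinding the definitions of the tensor product of bimodule categories and of the twisted bimodule categories ${}^F\mo^G$, reducing everything to the first equivalence $\ca^F\boxtimes_\ca\ca^G\simeq\ca^{FG}$.

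First I would establish part (1). Recall that for any tensor category $\ca$ and right $\ca$-module category $\mo$ one has $\mo\boxtimes_\ca\ca\simeq\mo$ canonically (the regular bimodule $\ca$ is a unit for the relative Deligne product). So the natural candidate for the equivalence $\ca^F\boxtimes_\ca\ca^G\to\ca^{FG}$ is obtained by first identifying $\ca^G$ as a left $\ca$-module category: as a plain category $\ca^G=\ca$, with left action $X\otb Z=X\ot Z$ (untwisted on the left, since $G$ acts on the right factor) and right action twisted by $G$. Then $\ca^F\boxtimes_\ca\ca^G$, using that $\ca^F$ as a right $\ca$-module category has $M\otb X=M\ot F(X)$, should be computed via the balancing relation $M\otb X\boxtimes Z\sim M\boxtimes X\otb Z$; one checks that the assignment $M\boxtimes Z\mapsto M\ot Z$ descends to a well-defined functor to $\ca$, and that under this identification the left $\ca$-action becomes $X\ot(-)$ composed with $F$ (from $\ca^F$) and the right $\ca$-action becomes $(-)\ot G(Y)$ (from $\ca^G$), which is exactly the bimodule structure of $\ca^{FG}$ once one tracks the associativity constraints $m^F$ and $m^G$: the left associativity picks up $\xi_{X,Y}$, the right picks up $\zeta_{X,Y}$, and there are no cross-terms, so the composite carries $(F\circ G, \xi\ast\zeta)$, i.e. the canonical tensor structure on $FG$. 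That $\ca^F$ is invertible is then immediate: taking $G=F^{-1}$ gives $\ca^F\boxtimes_\ca\ca^{F^{-1}}\simeq\ca^{\Id}=\ca$, and symmetrically on the other side, which is the definition of invertibility in \leref{m-inv}'s setting.

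Next, part (2). For the first equivalence $\mo\boxtimes_\ca\ca^F\simeq\mo^F$: again using $\mo\boxtimes_\ca\ca\simeq\mo$ as categories, the functor $M\boxtimes X\mapsto M\otb X$ descends to an equivalence $\mo\boxtimes_\ca\ca^F\to\mo$; I would check that the residual left $\ca$-action coming through $\ca^F$ on the right factor is, via the balancing, $X\otb(M\otb Z)\mapsto M\otb F(X)\otb Z$ — wait, more carefully: the right $\ca$-action on $\ca^F$ is the $F$-twisted one, so $\mo\boxtimes_\ca\ca^F$ acquires right $\ca$-action through $G\mapsto M\boxtimes X\otb F(G)$, i.e. after identification $M\otb X\mapsto (M\otb X)\otb F(G)$, which is precisely the right action of $\mo^F$; the left action is unchanged, matching $\mo^F$, and the associativity constraint on the right picks up $\xi$ from the tensor structure of $F$, giving $m^F$. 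For the second equivalence $(\mo^{\op})^F\simeq({}^F\mo)^{\op}$: here I would argue directly from the definition of $\mo^{\op}$ recalled in the preliminaries. Both sides have underlying category $\mo^{\op}$; on $(\mo^{\op})^F$ the left $\Do$-action (if $\mo$ is a $(\ca,\Do)$-bimodule, $\mo^{\op}$ is a $(\Do,\ca)$-bimodule, but since here $\mo$ is a $\ca$-bimodule everything is over $\ca$) is $X\mapsto$ the $\mo^{\op}$-action of $F(X)$, i.e. $M\otb F(X)^*$; on $({}^F\mo)^{\op}$ the right action is $(M,X)\mapsto M\otb_{{}^F\mo}X^*=M\otb F(X^*)$, and since $F$ is a tensor functor $F(X^*)\cong F(X)^*$ canonically, the two match up, and the associativity constraints agree after the same computation $m^{\op}_{X,Y,M}=m_{M,Y^*,X^*}$ combined with the twist. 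For both equivalences in (2) I should also note the symmetric statement (right-twist version $\mo\boxtimes_\ca{}^F\!\ca$, etc.) follows by the same argument or by applying $(-)^{\op}$.

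The main obstacle, and the only part requiring genuine care rather than bookkeeping, is the coherence verification: showing that the evident underlying functors are not merely equivalences of abelian categories but of bimodule categories, i.e. that the module-associativity natural isomorphisms are respected. This amounts to chasing the constraints $m^F$, $m^G$, $m^{\op}$ through the universal property of $\boxtimes_\ca$ and checking that the tensor structures $\xi$ of $F$ and $\zeta$ of $G$ compose correctly and that the canonical isomorphism $F(X^*)\cong F(X)^*$ for a tensor functor intertwines the two opposite-module structures; this is a standard but somewhat lengthy diagram chase, and I would either spell out the key pentagon/hexagon or cite \cite{ENO}, \cite{Ga}, \cite{Gr} for the analogous computations.
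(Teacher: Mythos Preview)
Your overall strategy---reduce everything to the unit property of the relative Deligne product and track the twisted actions---is the right one, and it is essentially what the references the paper cites (\cite[Lemma 6.1]{Ga} for (1), \cite[Prop.~3.15]{Gr} for the first half of (2)) carry out. The paper itself does not give an argument beyond those citations and the word ``straightforward'' for the second half of (2), so in that sense you are doing more than the paper does.

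However, there is a concrete error in your treatment of part (1). The assignment $M\boxtimes Z\mapsto M\ot Z$ is \emph{not} $\ca$-balanced for the pair $(\ca^F,\ca^G)$: the balancing relation reads $(M\ot F(X))\boxtimes Z\sim M\boxtimes(X\ot Z)$, and under your functor the two sides become $M\ot F(X)\ot Z$ and $M\ot X\ot Z$, which are not naturally isomorphic unless $F\simeq\Id$. The correct functor is $M\boxtimes Z\mapsto M\ot F(Z)$; the balancing isomorphism is then supplied by $\xi_{X,Z}$. With this correction the left $\ca$-action on the target stays \emph{untwisted} (contrary to what you wrote---the left action on $\ca^F$ is already the standard one), while the right action becomes $(M\ot F(Z))\ot F(G(Y))$, i.e.\ the $F\circ G$-twist, yielding $\ca^{FG}$ as claimed. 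A similar slip appears in your discussion of $({}^F\mo)^{\op}$: the right action of $({}^F\mo)^{\op}$ comes from the \emph{left} action of ${}^F\mo$, so it reads $M\otb X = F({}^*X)\otb_\mo M$, not $M\otb F(X^*)$; the comparison with $(\mo^{\op})^F$ then still goes through via $F({}^*X)\cong{}^*F(X)$, as you intended. These are localized and easily repaired, but as written the key functor in (1) fails to descend.
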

\begin{proof} (1) This is statement \cite[Lemma 6.1]{Ga}. (2)
The proof of the first equivalence goes \emph{mutatis mutandis} as
the one of \cite[Prop. 3.15]{Gr}. The second equivalence is straightforward.\qed
\end{proof}

\subsection{Tensor product of bimodule categories over Hopf algebras}

Let $A, B$ be  finite-dimensional Hopf algebras. A $(\Rep(B), \Rep(A))$-bimo\-dule
 category is the same as a left $\Rep( B\otk A^{\cop})$-module category.  By \cite[Theorem 3.3]{AM} 
we know that  any exact indecomposable $(\Rep(B), \Rep(A))$-bimodule category is equivalent
to the category ${}_S\mo$ of finite-dimensional left $S$-modules,
where $S$ is a finite-dimensional  right $B\otk A^{\cop}$-simple left
 $B\otk A^{\cop}$-comodule algebra.

\begin{rem}\rmlabel{diag-coaction} The identity object in $\brp(\Rep(A)) $ is the class of the
$\Rep(A)$-bimodule category ${}_{\diag(A)}\mo$, where
 $\diag(A)=A$ as algebras, and the left
$A\otk A^{\cop}$-comodule
structure is given by:
$$\lambda:\diag(A)\to A\otk A^{\cop}\otk \diag(A), \quad \lambda(a)=a\_1\ot a\_3\ot a\_2,\; a\in A.$$
\end{rem}

We proceed to determine the tensor product over $\Rep(B)$ of a $(\Rep(A), \Rep(B))$-bimodule category 
and a $(\Rep(B), \Rep(A))$-bimodule category, both exact indecomposable. Throughout, for such a product 
we shall shortly say {\em tensor product of bimodule categories over a Hopf algebra}.

Define $\pi_A:A\ot B\to A$, $\pi_B:A\ot B\to B$  the algebra maps
$$\pi_A(x\ot y)=\epsilon(y) x, \quad \pi_B(x\ot y)=\epsilon(x) y,$$
for all $x\in A, y\in B.$

Let $K$ be a right $B\ot A^{\cop}$-simple left $B\ot A^{\cop}$-comodule algebra and
$L$ a right $A\ot B^{\cop}$-simple left $A\ot B^{\cop}$-comodule algebra. Thus
the category ${}_K\mo$  is a $(\Rep(B), \Rep(A))$-bimodule category and
  ${}_L\mo$ is a  $(\Rep(A), \Rep(B))$-bimodule category.

Recall that $\overline{L}$ is the  left $B\ot A^{\cop}$-comodule algebra
with opposite algebra structure  $L^{\op}$ and  left $B\ot A^{\cop}$-comodule
structure:
\begin{equation}\label{comod-op} \overline{\lambda}:L\to B\otk  A^{\cop}\otk L,\quad
 l\mapsto  (\Ss^{-1}_B\ot \Ss_A)(l\_{-1})\ot l\_0,
\end{equation}
for all $l\in L$.

We denote by ${}_{K}^B\mo_ {\overline{L}}$ the category
of $(K,\overline{L})$-bimodules and left $B$-comodules such that the comodule
structure is a bimodule morphism. See
 \cite[Section 3]{M3}. It has a structure of $\Rep(A)$-bimodule category.

Also $L$ is a right $B$-comodule  and $K$ is a left $B$-comodule with comodule maps given by
\begin{equation}\eqlabel{comod-op2} l\mapsto l\_0\ot \pi_B(l\_{-1}),\quad k\mapsto  \pi_B(k\_{-1})\ot k\_0,
\end{equation}
 for all $l\in L$, $k\in K$. Using this structure we can form the cotensor product $L
\Box_B K$. Define
\begin{equation}\eqlabel{coact-coprodu-t} \lambda(l\ot k)=\pi_A(l\_{-1})\ot \pi_A(k\_{-1}) \ot l\_0\ot k\_0,
\end{equation}
for all $l\ot k\in L \Box_B K$. Then
$L \Box_B K$ is a left $A\otk A^{\cop}$-comodule algebra. See \cite[Lemma 3.6]{M3}.

\medbreak

Recall that in \cite{M3} we have defined a structure of $\Rep(A)$-bimodule category
on  ${}_{K}^B\mo_{\overline{L}}$. Similarly, we can define a
of $\Rep(A)$-bimodule category structure on ${}_L\mo^B_{\overline{K}}$.

\begin{thm}\thlabel{tensorpb}
 \begin{itemize}
  \item[(a)] There is a  $\Rep(A)$-bimodule equivalence:
$$ {}_L\mo \boxtimes_{\Rep(B)}
 {}_K\mo \simeq  {}_{K}^B\mo_{\overline{L}}.$$
\item[(b)]  The functor $I$ from \leref{equi-rel} is an equivalence of $\Rep(A)$-bimodule
categories.
\item[(c)] If $L$ is a Hopf-Galois extension, as a  right  $B$-comodule algebra, then there is an
equivalence of $\Rep(A)$-bimodule categories
$${}_{L \Box_B K}\mo \simeq  {}_L\mo \boxtimes_{\Rep(B)}
 {}_K\mo.$$
\end{itemize}
\end{thm}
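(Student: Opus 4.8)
The plan is to prove the three parts in the order (a), (b), (c), since each builds on the previous.

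For part (a): the key is to identify the Deligne tensor product ${}_L\mo\boxtimes_{\Rep(B)}{}_K\mo$ using the universal property of the relative tensor product of module categories. Recall that $\Rep(B)\simeq\Comod(B^*)$ and that a $(\Rep(A),\Rep(B))$-bimodule category is a left $\Rep(A\otk B^{\cop})$-module category, while a $(\Rep(B),\Rep(A))$-bimodule category is a left $\Rep(B\otk A^{\cop})$-module category. First I would recall that the relative tensor product $\mo\boxtimes_{\Rep(B)}\no$ can be realized as the category of $\Rep(B)$-balanced functors out of $\mo^{\op}\times\no$, equivalently (by the results in \cite{ENO}, \cite{Gr}) as modules over an appropriate algebra in $\Rep(A)\boxtimes\Rep(A)^{\rev}=\Rep(A\otk A^{\cop})$. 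The candidate algebra is exactly $L\Box_B K$ with its $A\otk A^{\cop}$-comodule algebra structure \equref{coact-coprodu-t}; so modules over it form a $\Rep(A)$-bimodule category. I would then exhibit a canonical $\Rep(B)$-balancing functor ${}_L\mo\times{}_K\mo\to{}_{K}^B\mo_{\overline{L}}$ sending a pair $(M,N)$ to $M\otb_? N$ equipped with the natural $K$-action, $\overline L$-action and $B$-comodule structure (the relevant structure is essentially $M\otk N$ with diagonal actions, cotensored appropriately), and check this functor is universal among $\Rep(B)$-balanced functors. The verification of the universal property is the technical heart: I would reduce it to the fact that ${}_L\mo^B_{\overline K}$ (equivalently ${}_{K}^B\mo_{\overline L}$ via \leref{equi-rel}) is exact and that its internal-Hom algebra computation matches, using that these bimodule categories are governed by the comodule algebras $L$ and $K$ and that $\Rep(B)$ is the regular bimodule. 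This is where I expect the main obstacle to lie — matching the abstract Deligne product with the concrete relative Hopf module category requires either a direct balancing/universal-property argument or an identification of internal Homs, and the bookkeeping of the two coactions (the $B$-part that gets "contracted" versus the $A$-parts that survive) is delicate.

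For part (b): this should be comparatively routine. The functor $I$ of \leref{equi-rel} is the identity on objects, changing a right $\overline K$-module / right $B$-comodule structure into a left $K$-module / left $B$-comodule structure via $\Ss^{-1}$. Since it is already known to be an equivalence of abelian categories, I only need to check it intertwines the two $\Rep(A)$-bimodule structures — i.e. that the left and right $\Rep(A)$-actions defined on ${}_L\mo^B_{\overline K}$ correspond under $I$ to those on ${}_{K}^B\mo_{\overline L}$. I would write out the $A\otk A^{\cop}$-action on each side (tensoring an object $M$ with an $A$-module on the left via the $A$-part of the $K$-coaction, and on the right via the $A$-part of the $L$-coaction) and verify compatibility with the antipode twist in the definition of $\overline\lambda$ in \equref{comod-op}. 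This is a direct diagram chase; the associativity constraints $m^{\op}$ recorded in the preliminaries handle themselves.

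For part (c): this is essentially an application of \thref{relative-hopf} combined with (a) and (b). Under the hypothesis that $L$ is a Hopf–Galois extension as a right $B$-comodule algebra, \thref{relative-hopf} (with the roles set so that the "Hopf–Galois extension" is $L$, the left $B$-comodule algebra is $K$, and one works relative to $B$) gives an equivalence ${}_{L\Box_B K}\mo\simeq{}_L\mo^B_{\overline K}$ via the adjoint pair $(F,G)$ with $F(M)=(L\otk K)\ot_{L\Box_B K}M$ and $G(N)=N^{\co B}$. I would check that this equivalence is one of $\Rep(A)$-bimodule categories: the $\Rep(A)$-action on ${}_{L\Box_B K}\mo$ comes from the $A\otk A^{\cop}$-comodule algebra structure \equref{coact-coprodu-t} on $L\Box_B K$, and one verifies $F$ (or $G$) is $A\otk A^{\cop}$-linear — again a coaction bookkeeping check, made easy because the $A\otk A^{\cop}$-coaction on $L\Box_B K$ is literally built from the $\pi_A$-components of the coactions on $L$ and $K$, which are exactly the data defining the $\Rep(A)$-bimodule structures on the other categories. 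Composing this with (a) (and, if needed, (b) to pass between the two presentations ${}_L\mo^B_{\overline K}$ and ${}_{K}^B\mo_{\overline L}$) yields the claimed equivalence ${}_{L\Box_B K}\mo\simeq{}_L\mo\boxtimes_{\Rep(B)}{}_K\mo$. The only subtlety is confirming the hypotheses of \thref{relative-hopf} are met with $L$ playing the role of the Hopf–Galois extension over $B$, which is exactly the standing assumption of (c).
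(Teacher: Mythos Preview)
Your proposal is correct and follows the same overall architecture as the paper: (a) is the substantive input, (b) is a routine compatibility check, and (c) is assembled from \thref{relative-hopf} together with (a) and (b). The paper's own proof is in fact even terser than yours --- for (a) it simply cites \cite{M3} rather than reproving anything, declares (b) ``straightforward'', and deduces (c) in one line from \thref{relative-hopf} plus (a) and (b). Your sketch of a universal-property/balancing argument for (a) is a reasonable outline of what a proof along the lines of \cite{M3} looks like, and your explicit mention (for (c)) that one must check the equivalence of \thref{relative-hopf} is compatible with the $\Rep(A)$-bimodule structures is a detail the paper leaves implicit.
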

\begin{proof} Item (a) was proven in  \cite{M3}. Item (b)
is straightforward and (c) follows from \thref{relative-hopf} and items (a) and (b).

\end{proof}

\section{Bicategories and tensor categories}\selabel{bicat-tensc}

  For a review on basic notions on bicategories
we refer to \cite{Be, Bo}.  When the associativity constraint of a bicategory is the identity, 
the bicategory is said to be a 2-category.  
 For completeness we add that a 2-equivalence between 2-categories $\B$
and $\B'$ is a pseudo-functor $(\Theta, \theta): \B \to\B'$ such that there
is another pseudo-functor $(\Pi, \pi): \B'\to\B$ and two pseudo-natural isomorphisms
$\sigma: (\Pi, \pi) \circ (\Theta, \theta) \to Id_{\B}$ and $\tau: (\Theta, \theta) \circ (\Pi, \pi) \to Id_{B'}$.

\medbreak

It is well-known that any monoidal category $\ca$ gives rise to a bicategory
with only one object. We shall denote by $\cab$ this bicategory. If $\ca, \Do$
are strict monoidal categories, a pseudo-functor $(F,\xi):\cab\to \underline{\Do}$
is nothing but a monoidal functor between $\ca$ and $\Do$. If
$(F,\xi), (G,\zeta): \ca\to \Do$ are monoidal functors between two strict monoidal categories, a pseudo-natural transformation
between them is a pair $(\eta, \eta_0):(F,\xi) \to (G,\zeta)$ where $\eta_0\in \Do$ is an object
and for any $X\in \ca$ natural transformations
$$\eta_X: F(X)\ot \eta_0\to \eta_0 \ot G(X),$$
such that
\begin{equation}\eqlabel{pseudo-nat-m} 
(\id_{\eta_0}\ot \zeta_{X,Y})\eta_{X\ot Y}=(\eta_X\ot \id_{G(Y)})
(\id_{F(X)}\ot \eta_{ Y})  (\xi_{X,Y}\ot \id_{\eta_0})
\end{equation}
 Set $\tilde\xi_X$ for the natural isomorphism $\xi_{1,X}: F(X)\to F(X)$ (and similarly for $\zeta_{1,X}$). 
Then  $\eta_{1_{\ca}}$ is a morphism $\eta_1:\eta_0\to\eta_0$ in $\Do$ satisfying 
$$(\id_{\eta_0}\ot\tilde\zeta_X)\eta_X=(\eta_1\ot\id_{G(X)})\eta_X(\tilde\xi_X\ot\id_{\eta_0}).$$
for every $X\in\ca$. 
Given two pseudo-natural transformations $(\eta, \eta_0):(F,\xi) \to (G,\zeta)$ and $(\sigma, \sigma_0):(G,\zeta) \to (H,\chi)$ 
their composition is given by $((\id_{\eta_0}\ot\sigma)(\eta\ot\id_{ \sigma_0}), \eta_0\ot\sigma_0):(F,\xi) \to (H,\chi)$. 
A pair $(\eta, \eta_0)$ is a pseudo-natural isomorphism if there exists a pseudo-natural transformation $(\sigma, \sigma_0)$ 
such that $(\eta, \eta_0)(\sigma, \sigma_0)=(\id_{F}, 1_{\Do})$ and $(\sigma, \sigma_0)(\eta, \eta_0)=(\id_{G},1_{\Do})$. 
Consequently, 
the object $\eta_0$ is invertible in $\Do$, that is, there exists an object 
$\overline{\eta_0}\in\Do$ such that $\eta_0\ot\overline{\eta_0}= 1_{\Do}=\overline{\eta_0}\ot\eta_0$. 
Any natural monoidal transformation $\mu:(F,\xi) \to (G,\zeta)$ gives rise to a
pseudo-natural transformation $(\mu, \uno)$. 

\begin{lma}\lelabel{pseudo-vs-nat} 
Let $(\Do, c)$ be a strict braided  monoidal category. Then any pseudo-natural isomorphism
$(\eta, \eta_0):(F,\xi) \to (G,\zeta)$ between two  monoidal  functors as above produces a natural
monoidal isomorphism.
\end{lma}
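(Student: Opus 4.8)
The plan is to exploit the braiding $c$ of $\Do$ to "straighten" the pseudo-natural isomorphism $(\eta,\eta_0)$ into an ordinary natural transformation by composing with the braiding isomorphisms $c_{\eta_0,G(X)}$, thereby absorbing the twist $\eta_X\colon F(X)\ot\eta_0\to\eta_0\ot G(X)$ into something of the form $F(X)\to G(X)$ after cancelling $\eta_0$. Concretely, I would first use invertibility of $\eta_0$: there is $\overline{\eta_0}\in\Do$ with $\eta_0\ot\overline{\eta_0}=1_{\Do}=\overline{\eta_0}\ot\eta_0$. Define, for each $X\in\ca$, the composite
\begin{equation*}
\mu_X\colon F(X)\cong F(X)\ot\eta_0\ot\overline{\eta_0}\xrightarrow{\;\eta_X\ot\id\;}\eta_0\ot G(X)\ot\overline{\eta_0}\xrightarrow{\;\id\ot c_{G(X),\overline{\eta_0}}^{\pm1}\;}\eta_0\ot\overline{\eta_0}\ot G(X)\cong G(X),
\end{equation*}
where the outer isomorphisms are the unit constraints (trivial in the strict setting). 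Since each arrow in the composite is an isomorphism and natural in $X$, each $\mu_X$ is an isomorphism natural in $X$; this uses nothing beyond naturality of $\eta$ and of the braiding.

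The substance of the proof is then to check that $\mu=(\mu_X)_X$ is \emph{monoidal}, i.e.\ compatible with $\xi$ and $\zeta$: $\mu_{X\ot Y}\,\xi_{X,Y}=\zeta_{X,Y}\,(\mu_X\ot\mu_Y)$ (and the unit condition $\mu_{1_\ca}=\id$). Here I would feed in the hexagon-type relation \equref{pseudo-nat-m} satisfied by $\eta$, namely $(\id_{\eta_0}\ot\zeta_{X,Y})\,\eta_{X\ot Y}=(\eta_X\ot\id_{G(Y)})(\id_{F(X)}\ot\eta_Y)(\xi_{X,Y}\ot\id_{\eta_0})$, together with the two braiding hexagon axioms for $c$ (expressing $c_{-,G(X)\ot G(Y)}$ and $c_{G(X)\ot G(Y),-}$ in terms of $c_{-,G(X)}$ and $c_{-,G(Y)}$) and the naturality of $c$ with respect to the morphisms $\eta_X$, $\eta_Y$. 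Unwinding the definition of $\mu_{X\ot Y}$ and repeatedly sliding the $\overline{\eta_0}$-strand past $G(X)$ and $G(Y)$ using the braiding, the relation \equref{pseudo-nat-m} is exactly what is needed to match the two sides; the extra $\eta_1\colon\eta_0\to\eta_0$ appearing via $\eta_{1_\ca}$ is killed because in $\mu_X$ the factor $\eta_0$ is ultimately cancelled against $\overline{\eta_0}$ and the displayed identity $(\id_{\eta_0}\ot\tilde\zeta_X)\eta_X=(\eta_1\ot\id_{G(X)})\eta_X(\tilde\xi_X\ot\id_{\eta_0})$ forces $\eta_1=\id_{\eta_0}$ after cancellation, giving the unit condition. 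I would also note that the construction does not depend (up to the canonical identifications) on the choice of inverse $\overline{\eta_0}$, nor on which of $c$ or $c^{-1}$ is used, since any two such choices differ by a canonical isomorphism that commutes with everything in sight.

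I expect the main obstacle to be purely bookkeeping: organizing the string-diagram manipulation so that the single cocycle-type identity \equref{pseudo-nat-m} lines up with the two braiding hexagons without sign or direction errors, and in particular verifying that the $\overline{\eta_0}$-strand can be transported coherently — this is where braidedness (as opposed to mere monoidality of $\Do$) is essential, since without $c$ one cannot move $G(X)$ past $\overline{\eta_0}$ at all. A clean way to present it, which I would adopt, is to phrase $(\eta,\eta_0)$ and $\mu$ in the graphical calculus for the braided category $\Do$, so that the whole verification becomes an isotopy of a small number of strands; the algebraic identities above are then the evident reading of that isotopy. Once monoidality and the unit condition are in hand, $\mu\colon(F,\xi)\to(G,\zeta)$ is by definition a natural monoidal isomorphism, completing the proof.
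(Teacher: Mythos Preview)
Your proposal is correct and follows essentially the same approach as the paper: define $\mu_X$ by tensoring $\eta_X$ with $\overline{\eta_0}$ and using the braiding to cancel $\eta_0$ against $\overline{\eta_0}$, then verify monoidality via \equref{pseudo-nat-m} together with naturality and the hexagon axiom for $c$. The only cosmetic difference is that the paper braids $\eta_0$ past $G(X)$ on the left (using $c_{\eta_0,G(X)}\ot\id_{\overline{\eta_0}}$) rather than braiding $G(X)$ past $\overline{\eta_0}$ on the right as you do; as you yourself note, this choice is immaterial.
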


\begin{proof} Given a pseudo-natural isomorphism
$(\eta, \eta_0):(F,\xi) \to (G,\zeta)$ define $\mu:(F,\xi) \to (G,\zeta)$ as
the composition
$$F(X) \xrightarrow{= } F(X) \ot \eta_0\ot \overline{ \eta_0}   \xrightarrow{\eta_X\ot \id}  \eta_0\ot G(X)\ot
\overline{ \eta_0} \xrightarrow{c_{\eta_0, G(X)}\ot \id  } G(X)\ot \eta_0\ot \overline{ \eta_0} \xrightarrow{=} G(X) $$
for any $X\in\ca$. Here $ \overline{ \eta_0}$ is the inverse objet of $\eta_0$. 
Then $\mu$ is clearly a natural transformation. We prove that it is monoidal: 
\begin{align*}
\zeta_{X,Y}&\mu(X\ots Y)=\left( (\zeta_{X,Y}\ots\id_{\eta_0})(c_{\eta_0, G(X\ot Y)}\small{\circ}\eta_{X\ot Y}) \ots \id_{\overline{\eta_0}}\right) 
(\id_{F(X\ot Y)}\ot\eta_0\ot\overline{\eta_0})\\
&=\left((\id_{G(X)}\ot c_{\eta_0, G(Y)})(c_{\eta_0, G(X)}\ots\id_{G(Y)})(\id_{\eta_0}\ot\zeta_{X,Y})\eta_{X\ot Y}\ots\id_{\overline{\eta_0}}\right) 
(\id_{F(X\ot Y)}\ot\eta_0\ot\overline{\eta_0}) 
\end{align*}  \vspace{-0,6cm}
\begin{equation*}
\begin{split}  
=\Big(\hspace{-0,1cm} (\id_{G(X)}\ot\id_{\eta_0\ot\overline{\eta_0}} \ot  & c_{\eta_0, G(Y)})
 (c_{\eta_0, G(X)}\ots\id_{\overline{\eta_0}\ot\eta_0} \ot\id_{G(Y)})  
(\eta_X \ots \id_{\overline{\eta_0}\ot\eta_0} \ots \id_{G(Y)})  \\  
&\quad
(\id_{F(X)}\ot \eta_0\ot\overline{\eta_0}\ot\eta_Y) (\xi_{X,Y}\ots\id_{\eta_0})
\ot\id_{\overline{\eta_0}} \hspace{-0,1cm}\Big) 
(\id_{F(X\ot Y)}\ot\eta_0\ot\overline{\eta_0})
\end{split}
\end{equation*}\vspace{-0,6cm}
\begin{align*}
= \Big(\hspace{-0,1cm} \left((c_{\eta_0, G(X)}\ots\eta_X)\ot\id_{\overline{\eta_0}}\right)&(\id_{F(X)}\ot\eta_0\ot\overline{\eta_0})\ot 
 ((c_{\eta_0, G(Y)}\small{\circ}\eta_Y)\ot\id_{\overline{\eta_0}})(\id_{F(Y)}\ot\eta_0\ot\overline{\eta_0})    \hspace{-0,1cm}\Big) \xi_{X,Y}\\
&=(\mu(X)\ots\mu(Y))\xi_{X,Y}.
\end{align*}
The second equality holds by naturality
 of the braiding and the third one is due to \equref{pseudo-nat-m} and 
because $\eta_0\ots\overline{\eta_0}=\uno$. 
\qed\end{proof}

\begin{rem}
In \leref{pseudo-vs-nat} the category $\D$ need not necessarily be braided. 
Observe that it suffices that the object $\eta_0$ is equipped 
with a lift to the Drinfel'd center $\Z(\D)$ of $\D$. 
That is, that there exists a family of natural isomorphisms 
$c_{\eta_0, X}: \eta_0\ot X\to X\ot\eta_0$ satisfying
 $c_{\eta_0, X\ot Y}=(X\ot c_{\eta_0, Y})(c_{\eta_0, X}\ot Y)$, for all $X,Y\in\D$. 
\end{rem}

Let $\C$ be a finite tensor category. We denote by $\C$-\ul{Mod} the 2-category (of $\C$-module categories) whose 0-cells
are $\C$-module categories, 1-cells are $\C$-module functors between them and 2-cells are natural transformations between
such functors (i.e. for two 0-cells $\M, \no$ there is a category $\Fun_{\C}(\M, \no)$ whose objects and morphisms present
the 1- and 2-cells of $\C$-\ul{Mod}). A $\C$-module functor $\F:\M\to\no$ is equipped with a natural isomorphism
$c_{X,M}: \F(X\crta\ot M)\to X\crta\ot\F(M)$ for $X\in\C, M\in\M$.
Then a natural transformation between two such functors $(\F, c)$ and $(\G, d)$ is $\alpha:\F\to\G$ such that
$(X\crta\ot\alpha(M))c_{X,M}=d_{X,M}\alpha(X\crta\ot M)$.

\begin{rem} \rmlabel{asoc action}
For a left $\C$-module category $\M$ the action functor $\crta\ot$ is   biexact.  
Therefore, for a $\C\x\D$-bimodule category $\M$ and $\no\in\D\x\Mod$ we have canonical isomorphisms
$X\crta\ot(M\bt_{\D}N)\iso (X\crta\ot M)\bt_{\D}N$ for all $X\in\C, M\in\M, N\in\no$.
\end{rem}

 The following result seems to be well-known. We write the proof for the reader's sake. 

\begin{thm}
Two finite tensor categories $\C$ and $\D$ are Morita equivalent if and only if there is a 2-equivalence
$(\HH, \ch): \D\x\ul\Mod \to \C\x\ul\Mod$.
\end{thm}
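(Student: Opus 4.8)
The plan is to establish the two implications separately, using \lref{m-inv} to reduce the statement about Morita equivalence to the existence of an invertible bimodule category, and then transporting that data through the $2$-categories of module categories.

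First, suppose $\C$ and $\D$ are Morita equivalent. By \lref{m-inv} there is an invertible $(\C,\D)$-bimodule category $\mo$. The idea is to define the pseudo-functor $(\HH,\ch)\colon \D\x\ul\Mod\to\C\x\ul\Mod$ on $0$-cells by $\HH(\no)=\mo\bt_{\D}\no$, which is a left $\C$-module category since $\mo$ is a $(\C,\D)$-bimodule category; on $1$-cells $F\colon\no\to\no'$ by $\HH(F)=\id_{\mo}\bt_{\D}F$, and on $2$-cells analogously. The structure isomorphism $\ch$ for composition of $1$-cells comes from the interchange/functoriality of $\bt_{\D}$. The candidate quasi-inverse $(\Pi,\pi)\colon \C\x\ul\Mod\to\D\x\ul\Mod$ is given by $\Pi(\mathcal E)=\mo^{\op}\bt_{\C}\mathcal E$. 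Then $\Pi\circ\HH(\no)=\mo^{\op}\bt_{\C}\mo\bt_{\D}\no\simeq\D\bt_{\D}\no\simeq\no$, using the invertibility equivalence $\mo^{\op}\bt_{\C}\mo\simeq\D$ of bimodule categories together with associativity of $\bt$; this gives the pseudo-natural isomorphism $\sigma\colon(\Pi,\pi)\circ(\HH,\ch)\to Id_{\D\x\ul\Mod}$, and symmetrically $\mo\bt_{\D}\mo^{\op}\simeq\C$ gives $\tau$. One checks that $\sigma$ and $\tau$ are pseudo-natural with respect to $1$-cells, which is routine from naturality of the associativity constraints of $\bt$ and from \rmref{asoc action}.

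Conversely, suppose there is a $2$-equivalence $(\HH,\ch)\colon\D\x\ul\Mod\to\C\x\ul\Mod$. The plan is to set $\mo:=\HH(\D)$, where $\D$ is regarded as the regular left $\D$-module category, a distinguished $0$-cell of $\D\x\ul\Mod$. Since $\HH$ is a $2$-equivalence, $\mo$ is an indecomposable exact left $\C$-module category (indecomposability and exactness are characterized internally to the $2$-category $\ul\Mod$, e.g. via $\Hom$-categories and the behavior of module functors, so they are preserved by any $2$-equivalence; the regular module category $\D$ over $\D$ is indecomposable and exact). It remains to produce a tensor equivalence $\D^{\rev}\simeq\End_{\C}(\mo)$. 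For this, observe that $\End_{\D}(\D)\simeq\D^{\rev}$ as tensor categories (the endofunctors of the regular module category are given by right multiplication), and that a $2$-equivalence induces, for each $0$-cell $\no$, an equivalence of categories $\End_{\D}(\no)\simeq\End_{\C}(\HH(\no))$ which is moreover monoidal, since the monoidal structure on $\End$ is composition of $1$-cells and $(\HH,\ch)$ respects composition up to the coherent isomorphism $\ch$. Applying this with $\no=\D$ yields the desired tensor equivalence $\D^{\rev}\simeq\End_{\C}(\mo)$, hence $\C$ and $\D$ are Morita equivalent.

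The main obstacle I anticipate is the converse direction: carefully checking that a $2$-equivalence sends the regular module category to an \emph{exact} module category, and that the induced equivalence on endomorphism categories is genuinely \emph{monoidal} rather than merely an equivalence of abelian categories. Exactness should follow from an intrinsic characterization (a module category $\mathcal E$ is exact iff every module functor out of it into any module category is exact, or equivalently in terms of projectivity of certain objects), which is manifestly preserved by $2$-equivalences; but pinning down such a characterization and invoking it cleanly requires care. The monoidality of the endofunctor equivalence is a diagram-chase with the coherence data $\ch$ of the pseudo-functor, and while conceptually straightforward, it is the step where one must be attentive to the associativity constraints of the bicategory $\ul\Mod$. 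The forward direction is comparatively safe, resting entirely on the definition of invertibility and the standard manipulations of $\bt_{\C}$ recalled in the preliminaries.
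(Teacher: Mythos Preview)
Your proposal is correct and follows essentially the same route as the paper: in one direction you tensor with an invertible bimodule category $\mo$ to build the $2$-equivalence $\mo\bt_{\D}-$ (with quasi-inverse $\mo^{\op}\bt_{\C}-$), and in the other you evaluate the $2$-equivalence at the regular module $\D$ and use $\End_{\D}(\D)\simeq\D^{\rev}$ together with the monoidal equivalence on endomorphism categories induced by $(\HH,\ch)$. The concern you flag about verifying that $\HH(\D)$ is exact and indecomposable is legitimate, and in fact the paper's own proof glosses over this point entirely; your suggested intrinsic characterization is a reasonable way to close that gap.
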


\begin{proof}
Let $(\HH, \ch): \D\x\ul\Mod \to \C\x\ul\Mod$ be a 2-equivalence.
For two 0-cells $\no, \Ll$ we have an equivalence functor $\HH_{\no, \Ll}: \Fun_{\D}(\no, \Ll)\to \Fun_{\C}(\HH(\no), \HH(\Ll))$
equipped with a monoidal structure $\ch$ for the composition of 1-cells. Let $\no, \Ll, \Pp\in\D\x\Mod$ and let
$\F: \no\to\Ll$ and $\G:\Ll\to\Pp$ be two 1-cells. There is a natural isomorphism
\begin{equation} \eqlabel{h monoidal}
\ch^{\no, \Ll, \Pp}: \HH_{\no, \Pp}(\F\crta\circ\G)\to\HH_{\no, \Ll}(\F)\crta\circ\HH_{\Ll, \Pp}(\G)
\end{equation}
(we usually write these in the reverted order of $\F$ and $\G$). Then $\End_{\C}(\HH(\D))= \Fun_{\C}(\HH(\D),\\ \HH(\D)) 
\simeq \Fun_{\D}(\D,  \D)\simeq\D$ as monoidal categories, thus $\C$ and $\D$ are Morita equivalent.

If $\C$ and $\D$ are Morita equivalent, by Lemma \ref{m-inv}, there is an invertible $(\C, \D)$-bimodule category $\M$ which gives rise to the desired
2-equivalence functor $(\HH, \ch): \D\x\ul\Mod \to \C\x\ul\Mod$.
On 0-cells define $\HH=\M\bt_{\D}-: \D\x\Mod\to\C\x\Mod$, that is $\HH(\no)=\M\bt_{\D}\no$ for $\no\in\D\x\Mod$.
For two 0-cells $\no, \Ll$ define the functor $\HH_{\no, \Ll}: \Fun_{\D}(\no, \Ll)\to \Fun_{\C}(\HH(\no), \HH(\Ll))$ by
$\HH_{\no, \Ll}=\M\bt_{\D}-$, i.e. for a $\D$-module functor $\F:\no\to\Ll$ we have $\HH_{\no, \Ll}(\F)=\M\bt_{\D}\F:
\M\bt_{\D}\no \to \M\bt_{\D}\Ll$. For objects $M\in\M, N\in\no$ it is $(\M\bt_{\D}\F)(M\bt_{\D} N)=M\bt_{\D}\F(N)$.
If $\F$ is a left $\D$-module functor then $\M\bt_{\D}\F$ is a left $\C$-module functor with the canonical isomorphism:
$\tilde c_{Y,M\bt_{\D}N}: (\M\bt_{\D}\F)(Y\crta\ot (M\bt_{\D}N))\to Y\crta\ot(\M\bt_{\D}\F)(M\bt_{\D}N)=
Y\crta\ot(M\bt_{\D}\F(N))$ where $Y\in\C$ and $M\in\M, N\in\no$. Due to \rmref{asoc action} the source object of
$\tilde c_{Y,M\bt_{\D}N}$ is isomorphic to $(Y\crta\ot M)\bt_{\D}\F(N)$ which is clearly isomorphic to the target object.
For two 1-cells $\F, \G: \no\to\Ll$ and a 2-cell $\alpha:\F\to\G$ we define $\HH_{\no, \Ll}(\alpha)=\M\bt_{\D}\alpha:
\M\bt_{\D}\F \to \M\bt_{\D}\G$ by
$(\M\bt_{\D}\alpha)(M\bt_{\D} N)=M\bt_{\D}\alpha(N): M\bt_{\D}\F(N)\to M\bt_{\D}\G(N)$. The natural transformation
$\M\bt_{\D}\alpha$ fulfills the compatibility condition $(Y\crta\ot(\M\bt_{\D}\alpha)(M\bt_{\D} N))\tilde c_{Y,M\bt_{\D} N}=
\tilde d_{Y,M\bt_{\D} N}(\M\bt_{\D}\alpha)(Y\crta\ot (M\bt_{\D} N))$ by \rmref{asoc action} and since $\tilde c_{Y,M\bt_{\D} N}$
and $\tilde d_{Y,M\bt_{\D} N}$ are canonical isomorphisms.

Observe that a monoidal structure $\ch$ for the composition of 1-cells \equref{h monoidal} in this case is
an isomorphism from $\M\bt_{\D}(\F\crta\circ\G)$ to $(\M\bt_{\D}\F)\crta\circ(\M\bt_{\D}\G)$. Though, these two functors are equal,
so we take $\ch^{\no, \Ll, \Pp}$ to be the identity for all $\no, \Ll, \Pp\in\D\x\Mod$.

Since $\M$ is invertible, the pseudo-functor $(\HH, \ch)$ is a 2-equivalence.
\qed\end{proof}

\section{Bi-Galois objects and invertible bimodule categories}\selabel{bigal-invt-b}

Let $H, L$ be finite-dimensional Hopf algebras. An $(H,L)$-\emph{biGalois object},
introduced by Schauenburg in \cite{S2}, is an algebra $A$  that  is a
left $H$-Galois extension and a right $L$-Galois extension  of $\ku$  such that
the two comodule structures make it an $(H,L)$-bicomodule. Two biGalois
objects are isomorphic if there exists a bijective bicomodule morphism that is 
also an algebra map.
Denote by $\biga(H)$ the set of isomorphism classes of $(H,H)$-biGalois extensions. It is
a group with product given by $\Box_H$.

\medbreak

If $A$ is an $(H,L)$-biGalois object then the functor
$\Fc_A: \Comod(L)\to  \Comod(H)$, $\Fc_A(X)= A\Box_L X$, $X\in  \Comod(L)$, 
is a tensor  equivalence  functor \cite{U}. The tensor structure on $\Fc_A$ is as follows.
If $X, Y\in \Comod(L)$ then
\begin{equation}\eqlabel{iso-gal}
 \xi^A_{X,Y}:(A\Box_L X)\otk (A\Box_L Y)\to A\Box_L ( X\otk Y),\;  \xi^A_{X,Y}(a_i \ot x_i \ot b_j\ot y_j)
= a_i b_j \ot x_i\ot y_j
\end{equation}
for any $a_i \ot x_i \in  A\Box_L X, $ $b_j\ot y_j \in A\Box_L Y$. If $A,B$ are $(H,L)$-biGalois
objects then there is a natural monoidal isomorphism between the
tensor functors $\Fc_A, \Fc_B$ if and only if $A\simeq B$ as biGalois objects,  
 \cite[Corollary 5.7]{S2}.

\begin{lma}\lelabel{inv-examples} Let $A$ be an $(H,H)$-biGalois object.
\begin{itemize}
 \item[1.] The category
${}_A\mo$  is an invertible $\Rep(H)$-bimodule category.
\item[2.] $\Comod(H)^{\Fc_A}$ is an invertible
$\Comod(H)$-bimodule category.
\end{itemize}

\end{lma}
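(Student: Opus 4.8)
The plan is to prove the two statements in parallel, deriving (1) from the general description of the tensor product of bimodule categories over a Hopf algebra (\thref{tensorpb}) and deriving (2) from (1) by transporting the structure along the Morita equivalence $\Rep(H)\simeq\Comod(H)^{\rev}$-type identification, or more directly by mimicking the argument on the comodule side. For (1), recall that $A$ being an $(H,H)$-biGalois object means that $A$ is simultaneously a left $H$-Galois extension and a right $H$-Galois extension of $\ku$; in particular $A$ is a right $H\ot H^{\cop}$-simple left $H\ot H^{\cop}$-comodule algebra (via the two coactions assembled together), so ${}_A\mo$ is indeed an exact indecomposable $\Rep(H)$-bimodule category in the sense of the excerpt. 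To see it is invertible I must exhibit equivalences of $\Rep(H)$-bimodule categories ${}_A\mo^{\op}\boxtimes_{\Rep(H)}{}_A\mo\simeq\Rep(H)$ and ${}_A\mo\boxtimes_{\Rep(H)}{}_A\mo^{\op}\simeq\Rep(H)$.

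The key computation is the first one. Using \thref{tensorpb}(c), since $A$ is a Hopf–Galois extension as a right $H$-comodule algebra, the tensor product ${}_A\mo\boxtimes_{\Rep(H)}{}_B\mo$ is equivalent to ${}_{A\Box_H B}\mo$ for an appropriate $B$. Taking $B=\overline{A}$ (the opposite algebra with the twisted coaction, which represents ${}_A\mo^{\op}$ as a $(\Rep(H),\Rep(H))$-bimodule category — cf.\ \cite[Prop.\ 2.15]{Gr} and \leref{equi-rel}), I would identify $\overline{A}\,\Box_H A$, with the $H\ot H^{\cop}$-coaction from \equref{coact-coprodu-t}, with $\diag(H)$ as a left $H\ot H^{\cop}$-comodule algebra, where $\diag(H)$ is the algebra carrying the identity class of $\brp(\Rep(H))$ as recalled in \rmref{diag-coaction}. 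The point is that the canonical map of a Galois extension, $A\ot_\ku A\to H\ot_\ku A$, $a\ot b\mapsto a\_{-1}\ot a\_0 b$ (for the left $H$-structure), restricts on the cotensor product over $H$ (taken with respect to the \emph{other}, right $H$-structure) to an isomorphism of comodule algebras onto $\diag(H)$; this is essentially the statement that a biGalois object trivializes on one side after cotensoring with its inverse. The second equivalence ${}_A\mo\boxtimes_{\Rep(H)}{}_A\mo^{\op}\simeq\Rep(H)$ follows symmetrically, using that $A$ is also a left $H$-Galois extension (so $\overline{A}$ is a right Hopf–Galois extension and \thref{tensorpb}(c) applies on the other side), together with the fact that in the group $\biga(H)$ the class of $\overline{A}$ is inverse to that of $A$ under $\Box_H$.

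For (2), one route is purely formal: the tensor equivalence $\Fc_A:\Comod(H)\to\Comod(H)$ is a tensor autoequivalence, so by \leref{bimod-ex}(1) the bimodule category $\Comod(H)^{\Fc_A}$ is invertible, with inverse $\Comod(H)^{\Fc_A^{-1}}=\Comod(H)^{\Fc_{\overline{A}}}$, and $\Comod(H)^{\Fc_A}\boxtimes_{\Comod(H)}\Comod(H)^{\Fc_A^{-1}}\simeq\Comod(H)^{\Id}\simeq\Comod(H)$. Here I should note that $\Fc_A$ is genuinely a tensor autoequivalence — it is an equivalence of categories by \cite{U} with the tensor structure $\xi^A$ of \equref{iso-gal}, and its quasi-inverse is $\Fc_{\overline A}$ since $A\Box_H\overline A\simeq H$ as biGalois objects (this is the group law in $\biga(H)$). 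So (2) is an immediate instance of the general principle ``a tensor autoequivalence yields an invertible bimodule category'' recorded in \seref{autoeq}.

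The main obstacle is the bookkeeping in (1): one must keep straight which of the two $H$-coactions on $A$ is used to form the cotensor product and which survives to give the $H\ot H^{\cop}$-coaction on $\overline A\,\Box_H A$, and then verify that the resulting comodule algebra is exactly $\diag(H)$ with the coaction $a\mapsto a\_1\ot a\_3\ot a\_2$ of \rmref{diag-coaction}, rather than some cocycle twist or autoequivalence-twist of it. Concretely, the bicomodule compatibility of the biGalois object $A$ is precisely what makes the two structures commute and makes the identification land on the \emph{untwisted} diagonal; writing this out carefully — checking that the Galois canonical map is an algebra map for the $\cdot_\sigma$-type products involved and intertwines the coactions — is the only genuinely technical part. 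Everything else is an application of \thref{tensorpb} and \leref{bimod-ex}.
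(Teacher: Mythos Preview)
Your proposal is correct and follows essentially the same route as the paper: part (2) is, as you say, an immediate instance of \leref{bimod-ex}(1) since $\Fc_A$ is a tensor autoequivalence, and part (1) is deduced from \thref{tensorpb}(c). The paper's own proof is a single sentence citing exactly these two results; your extra work in (1) --- explicitly identifying ${}_A\mo^{\op}$ with the module category over the inverse biGalois object and checking that the cotensor product yields $\diag(H)$ --- is more than the paper actually writes out (it implicitly relies on $\biga(H)$ being a group under $\Box_H$, so that ${}_A\mo$ acquires a two-sided tensor inverse ${}_{A^{-1}}\mo$ via \thref{tensorpb}(c), and invertibility in the sense of the definition then follows).
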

\begin{proof}  (1) is a consequence of \thref{tensorpb} (c) and (2)
is a particular case of \leref{bimod-ex}. \qed
\end{proof}

Bimodule categories in \leref{inv-examples} are related via the isomorphism
presented in  \equref{iso-brpic-dual}. Let us explain this assertion in detail. The category
of finite-dimensional vector spaces $vect_\ku$
is an invertible $(\Comod(H), \Rep(H^{\op}))$-bimodule category.
Let us denote
\begin{equation}\eqlabel{map-phi} \Phi: \brp(\Comod(H))\to \brp(\Rep(H^{\op}))
\end{equation}
the isomorphism described in \equref{iso-brpic-dual} using $\mo=vect_\ku$. 

\begin{prop}\prlabel{p-gal} Let $A$ be an $(H,H)$-biGalois object, then
$\Phi([\Comod(H)^{\Fc_A}])=[\mo_A]$.
\end{prop}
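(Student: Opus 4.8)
The plan is to unravel both sides of the claimed equality $\Phi([\Comod(H)^{\Fc_A}])=[\mo_A]$ using the explicit description of $\Phi$ in \equref{iso-brpic-dual} with the invertible $(\Comod(H),\Rep(H^{\op}))$-bimodule category $\mo=vect_\ku$. By definition, $\Phi([\no])=[\mo^{\op}\boxtimes_{\Comod(H)}\no\boxtimes_{\Comod(H)}\mo]$, so applied to $\no=\Comod(H)^{\Fc_A}$ we must identify
$$vect_\ku^{\,\op}\boxtimes_{\Comod(H)}\Comod(H)^{\Fc_A}\boxtimes_{\Comod(H)}vect_\ku$$
with $\mo_A$ as a $\Rep(H^{\op})$-bimodule category. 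First I would compute the right-hand factor: $\Comod(H)^{\Fc_A}\boxtimes_{\Comod(H)}vect_\ku$ is, by the definition of ${}^F\mo^G$ in \ssref{autoeq} together with \leref{bimod-ex}(2) (the equivalence $\mo\boxtimes_{\ca}\ca^F\simeq \mo^F$, applied here on the appropriate side), equivalent to $vect_\ku$ but with the $\Comod(H)$-action on the left twisted through $\Fc_A$; concretely this is the $(\Comod(H),\Rep(H^{\op}))$-bimodule category whose underlying category is $vect_\ku$ and whose left $\Comod(H)$-action is $X\otb V=\Fc_A(X)\ot V$ — call it ${}^{\Fc_A}vect_\ku$.

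Next I would feed this into the remaining Deligne product over $\Comod(H)$: we need $vect_\ku^{\,\op}\boxtimes_{\Comod(H)}{}^{\Fc_A}vect_\ku$ as a $\Rep(H^{\op})$-bimodule category. The key computational input is that the tensor product of bimodule categories over a Hopf algebra is given explicitly by \thref{tensorpb}: a Deligne product of module categories of the form ${}_L\mo\boxtimes_{\Rep(B)}{}_K\mo$ is realized as ${}_{K}^B\mo_{\overline L}$, and when $L$ is a Hopf–Galois extension it is even ${}_{L\Box_B K}\mo$. Here $vect_\ku\simeq {}_\ku\mo$ realized as a module category over $\Comod(H)$ corresponds under the comodule-algebra dictionary to the base field $\ku$, which is the trivial $H$-Galois object; the twist ${}^{\Fc_A}(-)$ on the $\Comod(H)$-side translates, via the tensor equivalence $\Fc_A=A\Box_H-$ and its monoidal structure \equref{iso-gal}, into replacing the relevant comodule algebra by $A$. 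Tracking the cotensor product $\ku\Box_H A\cong A$ (using that $A$ is an $(H,H)$-biGalois object, so in particular a left $H$-Galois extension of $\ku$) then yields that the whole iterated Deligne product is equivalent, as a left $\Rep(H^{\op})\boxtimes\Rep(H^{\op})^{\rev}$-module category, to ${}_A\mo=\mo_A$ with its standard $\Rep(H^{\op})$-bimodule structure from \remref{diag-coaction} (with the roles of $A$ and its comodule structure coming out of the right $H$-Galois structure on $A$).

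The main obstacle I expect is bookkeeping: making sure the $\op$'s, $\rev$'s and $\cop$'s line up, i.e. that the bimodule structure one reads off the composite $vect_\ku^{\,\op}\boxtimes_{\Comod(H)}\Comod(H)^{\Fc_A}\boxtimes_{\Comod(H)}vect_\ku$ really is the $\Rep(H^{\op})$-bimodule structure on $\mo_A$ and not that of some opposite or twisted variant. Concretely, one must verify that the left $H^{\op}\ot (H^{\op})^{\cop}$-comodule algebra structure induced on $A$ by the two boundary copies of $vect_\ku$ and the Galois equivalence $\Fc_A$ agrees with the diagonal-type coaction of \remref{diag-coaction}. I would handle this by writing out the actions explicitly on objects, using \eqref{comod-op}, \eqlabel{comod-op2}-style formulas and the monoidal constraint \equref{iso-gal} of $\Fc_A$, and checking compatibility on generators; once the underlying comodule algebra and its coaction are pinned down, the equivalence of bimodule categories is immediate from \thref{tensorpb}. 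The rest — exactness, indecomposability, invertibility — is automatic since $\Phi$ is an isomorphism of Brauer–Picard groups and ${}_A\mo$ is invertible by \leref{inv-examples}(1).
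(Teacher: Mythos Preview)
Your reduction to ${}^{\Fc_A}vect_\ku$ is fine and matches the paper, but the second step—computing $vect_\ku^{\,\op}\boxtimes_{\Comod(H)}{}^{\Fc_A}vect_\ku$ via \thref{tensorpb}—does not go through as written. That theorem computes Deligne products over $\Rep(B)$ of categories presented as ${}_L\mo$, ${}_K\mo$ for left comodule algebras over $A\ot B^{\cop}$ and $B\ot A^{\cop}$. Here the product is over $\Comod(H)$, not over a $\Rep(B)$; even granting $\Comod(H)\simeq\Rep(H^*)$, you would need to exhibit ${}^{\Fc_A}vect_\ku$ and $vect_\ku^{\,\op}$ as representation categories of suitable $H^*$-comodule algebras before the cotensor formula $L\Box_B K$ makes sense, and the cotensor you write, $\ku\Box_H A$, is over the wrong Hopf algebra for that formula. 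The assertion that twisting by $\Fc_A$ ``translates into replacing the relevant comodule algebra by $A$'' is exactly the content that needs proof, not an input.

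The paper takes a different route that sidesteps this. After the same first reductions (using both parts of \leref{bimod-ex}(2)) it reaches $[\,({}^{\Fc_A}vect_\ku)^{\op}\boxtimes_{\ca}vect_\ku\,]$ and then invokes \cite[Thm.~3.20]{Gr}, which identifies $\mo^{\op}\boxtimes_\ca\no$ with the functor category $\Hom_\ca(\mo,\no)$. The remaining task is then to build an explicit equivalence $\mo_A\simeq\Hom_\ca({}^{\Fc_A}vect_\ku,\,vect_\ku)$: to $(U,\mu)\in\mo_A$ one assigns the functor $M\mapsto U\otk M$ with module-functor constraint $c_{X,M}(u\ot a_i\ot x_i\ot m)=x_i\ot u\cdot a_i\ot m$, and conversely from $(G,c)$ one recovers the $A$-action on $U=G(\ku)$ via $\mu=(\epsilon\ot\id_U)\,c_{H,\ku}\,(\id_U\ot\rho)$. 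That hands-on identification is where the biGalois structure of $A$ is actually used, and it replaces the unjustified appeal to \thref{tensorpb} in your outline.
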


\begin{proof} Let us denote $\ca=\Comod(H)$. By definition we get
\begin{align*}\Phi([\ca^{\Fc_A}])
&= [vect_\ku^{op}\boxtimes_\ca\ca^{\Fc_A}\boxtimes_\ca vect_\ku]\\
&=[\big(vect_\ku^{op}\big)^{\Fc_A}\boxtimes_\ca vect_\ku]\\
&=[\big({}^{\Fc_A}vect_\ku\big)^{op}\boxtimes_\ca vect_\ku]\\
&=[\Hom_{\ca}({}^{\Fc_A}vect_\ku, vect_\ku)].
\end{align*}
The second and third equality follow from \leref{bimod-ex} (2).
The last equality is \cite[Thm. 3.20]{Gr}. It remains to prove that there is an
equivalence of bimodule categories
$$\mo_A\simeq \Hom_{\ca}({}^{\Fc_A}vect_\ku, vect_\ku).$$ We shall only sketch the proof.
Given an object $(U, \mu)\in \mo_A$, $\mu:U\otk A\to U$, define the functor $(G,c)\in
\Hom_{\ca}({}^{\Fc_A}vect_\ku, vect_\ku)$ as follows. For any $M\in vect_\ku$ 
set $G(M)=U\otk M,  $
and $c_{X,M}:U\otk  (A \Box_H X) \otk M\to X\otk U\otk M $ is
$$c_{X,M}(u\ot  a_i \ot x_i\ot  m)=x_i\ot  u\cdot a_i\ot m,$$
for all $u\in U$, $m \in M$, $\sum a_i \ot x_i\in A \Box_H X$. Conversely, given
a module functor $(G,c)\in
\Hom_{\ca}({}^{\Fc_A}vect_\ku, vect_\ku)$, since it is exact, there exists an object
$U\in vect_\ku$ such that $G(M)=U\otk M  $ for any $M\in vect_\ku$. The object
$U$ has a right $A$-module structure  $\mu:U\otk A\to U$ defined by
$$\mu= (\epsilon\ot \id_U)c_{H,\ku} (\id_U\ot \rho).$$
Here $\rho:A\to A\otk H$ is the right $H$-comodule structure. Both constructions
are well-defined and inverse of each other. \qed
\end{proof}

If $(A, \lambda)$ is a left $H$-comodule algebra and $g\in G(H)$ is a group-like
element we can define a new  comodule algebra $A^g$ on
the same underlying algebra $A$ with the coaction given by $\lambda^g: A^g \to H\otk A^g$:  
\begin{equation} \eqlabel{g-twisted}
\lambda^g(a)=g^{-1}a\_{-1}g\ot a\_0
\end{equation} 
for all $a\in A$. If $A$ is an  $(H,H)$-biGalois object let $A^g$ denote the above left comodule 
algebra whose right comodule structure remains unchanged. 

\begin{lma} $A^g$ is an $(H,H)$-biGalois object.\qed
\end{lma}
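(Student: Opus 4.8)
The plan is to verify directly that $A^g$ is a biGalois object by checking the three requirements: that $\lambda^g$ together with the unchanged right coaction $\rho$ makes $A^g$ an $(H,H)$-bicomodule, that $A^g$ is a left $H$-Galois extension of $\ku$, and that it remains a right $H$-Galois extension of $\ku$. The third point is essentially free since neither the algebra structure nor the right coaction has changed, so $A^g$ is a right $H$-Galois object exactly because $A$ is.

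First I would check that $\lambda^g$ is a left comodule algebra structure: coassociativity and counitality follow from those of $\lambda$ together with the fact that conjugation by the group-like $g$ is an algebra automorphism of $H$ and $\Delta(g^{-1}xg)=g^{-1}x\_1 g\ot g^{-1}x\_2 g$, while multiplicativity of $\lambda^g$ is inherited from that of $\lambda$. Next I would check the bicomodule compatibility $(\id_H\ot\rho)\lambda^g=(\lambda^g\ot\id_H)\rho$ on $A^g$; this reduces to the corresponding identity for $\lambda$ and $\rho$ on $A$ because the modification only inserts $g^{-1}(-)g$ in the leftmost tensor factor, which commutes with applying $\rho$ on the $A$-component.

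The key step is showing that $A^g$ is a left $H$-Galois extension of $\ku$, i.e. that the canonical map $\can^g: A^g\otk A^g\to H\otk A^g$, $a\ot b\mapsto g^{-1}a\_{-1}g\ot a\_0 b$, is bijective, and that $(A^g)^{\co H}=\ku$. For the coinvariants: $a\in(A^g)^{\co H}$ means $g^{-1}a\_{-1}g\ot a\_0=1\ot a$, equivalently $a\_{-1}\ot a\_0=1\ot a$ (multiply the first leg by $g$ on the left and $g^{-1}$ on the right), so $(A^g)^{\co H}=A^{\co H}=\ku$. For bijectivity, I would exhibit $\can^g$ as the composite of the original canonical map $\can:A\otk A\to H\otk A$ (which is bijective since $A$ is left $H$-Galois) with the linear automorphism $H\otk A\to H\otk A$ given by $h\ot a\mapsto ghg^{-1}\ot a$; concretely $\can^g=(c_g\ot\id)\circ\can$ where $c_g(h)=ghg^{-1}$, hence $\can^g$ is bijective.

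The main obstacle, if any, is bookkeeping: making sure the conjugation by $g$ is inserted in the correct leg and on the correct side at each step, and confirming that the right $H$-Galois structure and the bicomodule axiom genuinely survive untouched. Once those routine verifications are in place the lemma follows. I would close by remarking that this shows the construction $A\mapsto A^g$ lands in $\BiGal(H)$, so it is legitimate to speak of $A^g$ as a biGalois object in what follows.
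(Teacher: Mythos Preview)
Your verification is correct and complete; the paper itself gives no proof at all (the lemma is stated with an immediate \qed), so there is nothing to compare against beyond noting that the authors regard the result as routine, which your argument confirms.

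One small bookkeeping slip, of exactly the kind you flag: with $\lambda^g(a)=g^{-1}a\_{-1}g\ot a\_0$ the left canonical map is $\can^g(a\ot b)=g^{-1}a\_{-1}g\ot a\_0 b$, so the automorphism you need is $c(h)=g^{-1}hg$, not $ghg^{-1}$. This does not affect the argument, since either conjugation is a linear automorphism of $H$ and hence $\can^g$ is bijective iff $\can$ is.
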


\begin{defn}\delabel{eq-bigal} If $A, B\in \biga(H)$ we shall say that $A$ is \emph{equivalent} to $B$, and denote it by $A\sim
B$ if there exists an element $g\in G(H)$ such that $A^g\simeq B$ as biGalois objects.
\end{defn}

\begin{thm}\thlabel{bigal-pseudonat} Let $A, B\in \biga(H)$. The following statements are equivalent.
\begin{enumerate}
 \item $A\sim B$;

\item there exists an equivalence  $\Comod(H)^{\Fc_A}
\simeq \Comod(H)^{\Fc_B}$ of $\Comod(H)$-bimodule categories;

\item there exists an equivalence 
${}_A\mo \simeq  {}_B\mo$ of $\Rep(H)$-bimodule categories; 

\item there exists a pseudo-natural isomorphism $(\eta, \eta_0):\Fc_A \to \Fc_B$.

\end{enumerate}

\end{thm}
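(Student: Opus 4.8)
The plan is to establish a cycle of implications among the four statements, using the dictionary already built in the paper between biGalois objects, tensor autoequivalences, invertible bimodule categories over $\Rep(H)$, and invertible bimodule categories over $\Comod(H)$.

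First I would prove $(1)\Leftrightarrow(2)$. By \leref{inv-examples}(2) both $\Comod(H)^{\Fc_A}$ and $\Comod(H)^{\Fc_B}$ are invertible $\Comod(H)$-bimodule categories of the form $\ca^F$ for $F$ a tensor autoequivalence. Using \leref{bimod-ex}(1) one has $\ca^{\Fc_A}\boxtimes_\ca \ca^{\Fc_B^{-1}}\simeq \ca^{\Fc_A\Fc_B^{-1}}$, and $\ca^F\simeq\ca$ as bimodule categories precisely when $F$ is isomorphic, as a tensor functor, to an inner autoequivalence (conjugation by an invertible object). Since the invertible objects of $\Comod(H)$ are exactly the one-dimensional comodules $\ku_g$ for $g\in G(H)$, and conjugation by $\ku_g$ corresponds under $\Fc_{(-)}$ to the passage $A\mapsto A^g$ from \equref{g-twisted}, we get $\ca^{\Fc_A}\simeq\ca^{\Fc_B}$ iff $\Fc_A\Fc_B^{-1}\simeq\Fc_{A^g}\Fc_B^{-1}$ for some $g$ iff $\Fc_{A^g}\simeq\Fc_B$ as tensor functors. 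By the cited \cite[Corollary 5.7]{S2} the latter holds iff $A^g\simeq B$ as biGalois objects, i.e. $A\sim B$. I should double-check that $\Fc_{A^g}$ really is the $\ku_g$-conjugate of $\Fc_A$; this is a short direct computation with the definition of $\Box$.

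Next, $(2)\Leftrightarrow(3)$ follows by transporting along the isomorphism $\Phi$ of \equref{map-phi}: by \prref{p-gal}, $\Phi([\Comod(H)^{\Fc_A}])=[{}_A\mo]$ and likewise for $B$, and $\Phi$ is an isomorphism of groups (in particular a bijection on equivalence classes), so $[\Comod(H)^{\Fc_A}]=[\Comod(H)^{\Fc_B}]$ iff $[{}_A\mo]=[{}_B\mo]$. Strictly speaking $\Phi$ lands in $\brp(\Rep(H^{\op}))$ rather than $\brp(\Rep(H))$, but $H$ and $H^{\op}$ interchange under the same machinery, so this is only a bookkeeping point; alternatively one argues $(1)\Rightarrow(3)$ directly using \thref{tensorpb}(c) to identify ${}_{A^g}\mo$ with ${}_A\mo$ twisted by the bimodule-category automorphism induced by $\ku_g$, and then that such a twist is trivial in $\brp$.

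Finally I would handle $(2)\Leftrightarrow(4)$, which is where the one-object bicategory picture of Section 3 does the work. A pseudo-natural isomorphism $(\eta,\eta_0):\Fc_A\to\Fc_B$ between tensor functors $\Comod(H)\to\Comod(H)$ is, viewed in $\underline{\Comod(H)}$, exactly a $1$-cell realizing an equivalence of the associated invertible bimodule categories ${}^{\Fc_A}\ca\simeq{}^{\Fc_B}\ca$, equivalently $\ca^{\Fc_A}\simeq\ca^{\Fc_B}$ after passing to opposites via \leref{bimod-ex}(2); unwinding the definition of the bimodule structure on ${}^F\ca$ shows a bimodule equivalence is precisely the data of such an $(\eta,\eta_0)$. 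The main obstacle I anticipate is precisely this last equivalence: making the correspondence between ``pseudo-natural isomorphism of tensor functors'' and ``equivalence of the twisted bimodule categories $\ca^{F}$'' fully precise, keeping track of which $2$-categorical structure ($\ca$-\underline{Mod} versus $\underline{\ca}$) is in play and checking the coherence condition \equref{pseudo-nat-m} matches the module-functor constraint. I would lean on \leref{pseudo-vs-nat} and the surrounding discussion to reduce pseudo-naturality to ordinary monoidal naturality where convenient, and otherwise this is a diagram-chase with the definitions in \seref{autoeq} and \seref{bicat-tensc}.
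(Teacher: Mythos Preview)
Your strategy is sound and yields a correct proof, but it diverges from the paper's in an instructive way. Both you and the paper handle $(2)\Leftrightarrow(3)$ identically via \prref{p-gal}, and $(2)\Leftrightarrow(4)$ via the standard fact (you unwind it from the one-object bicategory picture; the paper simply cites \cite[Lemma 6.1]{Ga}) that an equivalence $\ca^F\simeq\ca^G$ of $\ca$-bimodule categories is precisely the data of a pseudo-natural isomorphism $F\to G$. The real difference is in linking $(1)$ to the rest. The paper proves $(1)\Leftrightarrow(4)$ directly and constructively: given $(\eta,\eta_0)$ it writes down the candidate biGalois map as the composite
$A\xrightarrow{\iota}A\otimes\eta_0\xrightarrow{\rho\otimes\id}(A\Box_H H)\otimes\eta_0\xrightarrow{\eta_H}\eta_0\otimes(B\Box_H H)\xrightarrow{\id\otimes\epsilon}\eta_0\otimes B\xrightarrow{\pi}B$
and then checks by hand, in three separate Claims, that this is an algebra map and an $(H,H)$-bicomodule map $A^g\to B$, where $g\in G(H)$ is the group-like determining $\eta_0$. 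Your route instead translates a pseudo-natural isomorphism into ``$\Fc_A$ is monoidally naturally isomorphic to $\mathrm{conj}_{\ku_g}\circ\Fc_B\simeq\Fc_{B^{g^{-1}}}$'' and then invokes \cite[Corollary 5.7]{S2} as a black box to extract the biGalois isomorphism. Your approach is cleaner and avoids the explicit verifications; the paper's is self-contained and makes the map $f$ concrete.

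Two small points to tighten. First, in your $(1)\Leftrightarrow(2)$ chain the step ``$\Fc_A\Fc_B^{-1}\simeq\Fc_{A^g}\Fc_B^{-1}$'' is garbled; what you actually need is $\Fc_A\Fc_B^{-1}\simeq\mathrm{conj}_{\ku_g}$, hence $\Fc_A\simeq\mathrm{conj}_{\ku_g}\circ\Fc_B=\Fc_{B^{g^{-1}}}$, and then Schauenburg gives $A\simeq B^{g^{-1}}$, i.e.\ $A\sim B$. Second, do not lean on \leref{pseudo-vs-nat} here: that lemma requires the target to be braided, and $\Comod(H)$ generally is not. The braiding-free statement you want (and implicitly use already) is that a pseudo-natural isomorphism $(\eta,\eta_0):F\to G$ is the same data as a monoidal natural isomorphism $F\to\mathrm{conj}_{\eta_0}\circ G$; this is an elementary unwinding and needs no extra hypothesis.
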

\begin{proof} The equivalence between (2) and (4) is given in \cite[Lemma 6.1]{Ga}.
The equivalence between (2) and (3) 
 follows from   \prref{p-gal}. Let us prove that
(1) is equivalent to (4). Assume that there is a group-like element 
 $g\in G(H)$ and a bicomodule algebra isomorphism $f:A^g\to B$. Define
$\eta_0=\ku$ with left $H$-comodule action 
$\eta_0 \to H\otk \eta_0,$ $1\to g\ot 1$, and for any $X\in \Comod(H)$
$$\eta_X: \Fc_A(X)\otk \eta_0 \to \eta_0 \otk  \Fc_B(X), \quad \eta_X(a\ot
x\ot 1)=1\ot f(a)\ot x,$$
for all $a\ot x\in \Fc_A(X)$. Since $f$ is a right $H$-comodule morphism
the map $\eta_X$ is well-defined. Let $X, Y\in \Comod(H)$, $a\ot x \in \Fc_A(X)$,
$b\ot y \in \Fc_A( Y )$, then 
\begin{align*}(\id_{\eta_0}\ot (\xi^B_{X,Y})^{-1})&(\eta_X\ot \id)(\id\ot \eta_{ Y})(a\ot x\ot b\ot y\ot 1)=
1\ot f(a)f(b)\ot x\ot y\\
&=1\ot f(ab)\ot x\ot y\\
&=\eta_{X\ot Y} ((\xi^A_{X,Y})^{-1}\ot \id_{\eta_0})(a\ot x\ot b\ot y\ot 1).
\end{align*}
Thus \equref{pseudo-nat-m} is fulfilled and $(\eta, \eta_0)$ is a pseudo-natural transformation.

Now, let us assume that there exists a 
pseudo-natural isomorphism $(\eta, \eta_0):\Fc_A \to \Fc_B$. 
Since $\eta_0 \in \Comod(H)$ is an invertible object it is one-dimensional. Hence, 
there exists a group-like element
$g\in G(H)$ such that the coaction  $\eta_0 \to H\otk \eta_0$ is given by $1\mapsto g\ot 1$.
Define $f:A\to B$ as the composition
$$A\xrightarrow{\iota }A\otk \eta_0 \xrightarrow{\rho \ot \id}A \Box_H H \otk \eta_0 \xrightarrow{\eta_H}
 \eta_0  \otk B \Box_H H \xrightarrow{\id\ot \epsilon} \eta_0  \otk B \xrightarrow{\pi} B. $$
We must show that $f$ is an algebra map and an $H$-bicomodule homomorphism. 

\begin{claim} $f:A\to B$ is an algebra map.
\end{claim}
\begin{proofc} It is enough to prove that $\eta_H$ is an algebra map. Observe that
$A\Box_H H$ is a subalgebra of $A\otk H$ and the algebra structure on $A \Box_H H \otk \eta_0$
is  that of the tensor product algebra. We shall denote 
$$m_1:\Fc_A(H) \otk \eta_0 \otk \Fc_A(H) \otk \eta_0\to \Fc_A(H) \otk \eta_0,$$
$$m_2:\eta_0\otk\Fc_B(H) \otk \eta_0\otk\Fc_B(H) \to \eta_0\otk\Fc_B(H),$$
the algebra structures. Define the isomorphisms
\begin{align*}
\gamma_0&:\eta_0\otk \Fc_B(H)\otk \Fc_B(H)\to \eta_0\otk \Fc_B(H)\otk \eta_0\otk \Fc_B(H), & 
\gamma_0(1\ot a\ot b)&=1\ot a\ot 1\ot b,\\
\gamma_1&:\Fc_A(H)\otk \Fc_A(H)\otk\eta_0\to \Fc_A(H)\otk\eta_0\otk \Fc_A(H)\otk\eta_0, 
& \gamma_1(x\ot y\ot1)&=x\ot 1\ot y\ot1,\\
\gamma_2&:\Fc_A(H)\otk\eta_0\ot \Fc_B(H)\to \Fc_A(H)\otk\eta_0\otk\eta_0\otk \Fc_B(H),
 & \gamma_2(x\ot1\ot b)&=x\ot 1\ot 1\ot b,
\end{align*}
for all $a, b\in  \Fc_B(H)$, $x, y\in  \Fc_A(H).$ It is not difficult to prove that
\begin{equation}\eqlabel{eq-dem11} 
(\id_{\Fc_A(H)}\ot \id_{\eta_0}\ot \eta_H)\gamma_1=  \gamma_2(\id_{\Fc_A(H)}\ot \eta_H),
\end{equation}
\begin{equation}\eqlabel{eq-dem12}
( \eta_H\ot \id_{\eta_0} \ot \id_{\Fc_B(H)})  \gamma_2 =  \gamma_0 ( \eta_H\ot  \id_{\Fc_B(H)}).
\end{equation}
\end{proofc}
Let us denote $m:H\otk H\to H$ the product. Since $m$ is a morphism in $\Comod(H)$,
the naturality of $\eta$ implies that
$(\id_{\eta_0}\ot \Fc_B(m))\eta_{H\ot H}=\eta_H(\Fc_A(m)\ot \id_{\eta_0}).$
The following equalities are readily verified:
\begin{equation}\eqlabel{eq-dem111}
 \Fc_A(m)\ot \id_{\eta_0} = m_1 \gamma_1 (\xi^A_{H,H}\ot \id_{\eta_0}),\quad 
 \id_{\eta_0}\ot  \Fc_B(m)= m_2\gamma_0 (\id_{\eta_0}\ot \xi^B_{H,H}).
\end{equation}
Since $\eta$ is pseudo-natural, then
\begin{align}\eqlabel{eq-dem1}(\eta_H\ot \id_{ \Fc_B(H)})(\id_{ \Fc_A(H)}\ot\eta_H)(\xi^A_{H,H}\ot \id_{\eta_0})
=(\id_{\eta_0}\ot \xi^B_{H,H})\eta_{H\ot H}.
\end{align}
Let us denote the isomorphism $\phi=\gamma_1(\xi^A_{H,H}\ot \id_{\eta_0})$. We have that
\begin{align*} (\eta_H\ot \eta_H)\phi&=(\eta_H\ot \id_{\eta_0\ot \Fc_B(H)})
(\id_{\Fc_A(H)\ot\eta_0}\ot\eta_H)\gamma_1(\xi^A_{H,H}\ot id_{\eta_0})\\
&=(\eta_H\ot \id_{\eta_0\ot \Fc_B(H)})(\gamma_2(\id_{\Fc_A(H)}\ot\eta_H))(\xi^A_{H,H}\ot \id_{\eta_0})\\
&=\gamma_0(\eta_H\ot \id_{\Fc_B(H)})(\id_{F(H)}\ot\eta_H)(\xi^A_{H,H}\ot \id_{\eta_0})\\
&=\gamma_0(id_{\eta_0}\ot\xi^B_{H,H})\eta_{H\ot H}.
\end{align*}
The second equality follows from \equref{eq-dem11}, the third equality by 
\equref{eq-dem12} and the last equality follows from \equref{eq-dem1}. Now, we have that
\begin{align*} \eta_H m_1 \phi&=  \eta_H (\Fc_A(m)\ot \id_{\eta_0})
=(\id_{\eta_0}\ot \Fc_B(m))\eta_{H\ot H}\\
&=m_2\gamma_0 (\id_{\eta_0}\ot \xi^B_{H,H})\eta_{H\ot H}=m_2(\eta_H\ot \eta_H)\phi.
\end{align*}
The first and third equalities follow from \equref{eq-dem111}. Hence
$\eta_H m_1= m_2(\eta_H\ot \eta_H)$ and $\eta_H$ is an algebra map. That 
$\eta_H(1\ot 1\ot 1)=1\ot 1\ot 1$ follows from the naturality of $\eta$ 
(since $A\Box_H \ku=\ku$, it is $\eta_\ku=\id_\ku$). \qed

For any vector space $V$ we shall denote by $V^t$ the same vector space $V$ with
trivial left $H$-coaction: $\lambda^t:V^t\to H\otk V^t$, $\lambda^t(v)=1\ot v$ for all
$v\in V$. Then $A\Box_H V^t=V^t$ and $\eta$ is additive, because for any
vector space $V$ we have that $\eta_{V^t}=\id_{V}$.

\begin{claim} $f:A\to B$ is a right $H$-comodule  map.
\end{claim}
\begin{proofc} 
The spaces  $A \Box_H H \otk \eta_0$ and 
 $\eta_0  \otk B \Box_H H$ have a right $H$-comodule structure as follows:
$$\rho_1: A \Box_H H \otk \eta_0\to  A \Box_H H \otk \eta_0\otk H, \;\; 
\rho_2: \eta_0  \otk B \Box_H H \to \eta_0  \otk B \Box_H H \otk H,$$
$$\rho_1(a\ot h\ot 1)=a\ot h\_1\ot 1\ot h\_2, \quad \rho_2(1\ot b\ot h)=1 \ot b\ot h\_1\ot h\_2,$$
for all $a\ot h\in A \Box_H H$, $b\ot h \in B \Box_H H$. With these structures, the maps 
$\iota, \rho\ot \id, \id\ot \varepsilon$ and $\pi$ are comodule morphisms. Hence,
it is enough to prove that $\eta_H$ is a right $H$-comodule map. First note that
$$\rho_1= (\id\ot \eta_{H^t}) (\xi^A_{H, H^t}\ot \id_{\eta_0}) (\id\ot \Delta\ot \id), \quad 
\rho_2=(\id_{\eta_0}\ot \xi^B_{H, H^t} ) (\id\ot \id\ot \Delta).$$
Now, we have
\begin{align*} (\eta_H \ot\id_H)\rho_1&=
 (\eta_H \ot\id_H) (\id\ot \eta_{H^t})(\xi^A_{H, H^t}\ot \id_{\eta_0}) (\id\ot \Delta\ot \id)\\
&= (\id_{\eta_0}\ot \xi^B_{H, H^t})  \eta_{H\otk H^t}  (\id\ot \Delta\ot \id)\\
&=(\id_{\eta_0}\ot \xi^B_{H, H^t} )(\id\ot \id\ot \Delta)\eta_H=  \rho_2 \eta_H.
\end{align*}
The 
second equality follows from \equref{pseudo-nat-m}, and the third one follows from the naturality of 
$\eta$ since $\Delta:H\to H\otk H^t$ is a left $H$-comodule map.
\qed\end{proofc}

\begin{claim} $f:A^g\to B$ is a left $H$-comodule  map.
\end{claim}
\begin{proofc} If $\lambda:C\to H\otk C$ is a left $H$-comodule and $g\in G(H)$, define the left
$H$-comodules $C^{(g)}$ and ${}^{(g)}C$ as follows. As vector spaces $C^{(g)}={}^{(g)}C =C$,
the comodule structures $\lambda^{(g)}:C^{(g)}\to H\otk C^{(g)}$, 
${}^{(g)}\lambda:{}^{(g)}C\to H\otk {}^{(g)}C$ are defined by
$$\lambda^{(g)}(c)=c\_{-1}g\ot c\_0, \quad {}^{(g)}\lambda(c)=gc\_{-1}\ot c\_0,$$
for all $c\in C$.  Note that $f:A^g\to B$ is a left $H$-comodule  map if and only if
$f:A^{(g)}\to {}^{(g)}B$ is a left $H$-comodule  map. It is enough to observe that
$\iota:A^{(g)}\to A\otk \eta_0$ and $\pi:\eta_0 \otk B\to {}^{(g)}B$ are comodule morphisms. 
\end{proofc}\qed
\end{proof}

Define $\inbi(H)$ as the set of isomorphism classes of $(H,H)$-biGalois
objects $A$ such that $A\sim H$.

\begin{cor} \colabel{bigal seq}
There is an exact sequence of groups
$$1\to \inbi(H) \to \biga(H) \to  \brp(\Rep(H)).$$
\end{cor}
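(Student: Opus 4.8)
The plan is to assemble the exact sequence from pieces that are already essentially proven in the excerpt. First I would define the group homomorphism $\biga(H)\to\brp(\Rep(H))$. By \leref{inv-examples}(1), for any $(H,H)$-biGalois object $A$ the category ${}_A\mo$ is an invertible $\Rep(H)$-bimodule category, so we get a well-defined map on underlying sets $A\mapsto [{}_A\mo]$. Multiplicativity is where I would use \thref{tensorpb}(c): the product in $\biga(H)$ is the cotensor product $\Box_H$, and since any biGalois object is in particular a Hopf-Galois extension as a right $H$-comodule algebra, \thref{tensorpb}(c) gives an equivalence of $\Rep(H)$-bimodule categories ${}_{A\Box_H B}\mo\simeq {}_A\mo\boxtimes_{\Rep(H)}{}_B\mo$ (taking $A=B=H$ in the notation there, with $K=B$, $L=A$ biGalois). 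Hence $[{}_{A\Box_H B}\mo]=[{}_A\mo]\cdot[{}_B\mo]$ in $\brp(\Rep(H))$, so the map is a group homomorphism. One should also check that it descends from isomorphism classes, i.e.\ that $A\simeq B$ as biGalois objects implies ${}_A\mo\simeq{}_B\mo$ as bimodule categories, which is immediate since an algebra isomorphism $A\to B$ compatible with the comodule structures induces an equivalence of module categories.

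Next I would identify the kernel. By definition $\inbi(H)$ is the set of isomorphism classes of biGalois objects $A$ with $A\sim H$, and by \thref{bigal-pseudonat} (equivalence of (1) and (3)) we have $A\sim H$ if and only if ${}_A\mo\simeq {}_H\mo$ as $\Rep(H)$-bimodule categories. But ${}_H\mo={}_{\diag(H)}\mo$ is precisely the identity object of $\brp(\Rep(H))$ by \rmref{diag-coaction} — here one must check that the comodule algebra structure on $H$ coming from viewing it as a biGalois object (via \leref{inv-examples}(1), i.e.\ the structure used in \thref{tensorpb}) coincides with the diagonal structure $\lambda(a)=a\_1\ot a\_3\ot a\_2$ of \rmref{diag-coaction}; this is a direct unwinding of the definitions in Subsection 2.6. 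Therefore $[{}_A\mo]$ is trivial in $\brp(\Rep(H))$ exactly when $A\sim H$, i.e.\ exactly when $[A]\in\inbi(H)$. This shows the kernel of $\biga(H)\to\brp(\Rep(H))$ is $\inbi(H)$.

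It then remains to observe that $\inbi(H)$ is a subgroup of $\biga(H)$: being the kernel of a group homomorphism it is automatically a normal subgroup, so the inclusion $\inbi(H)\hookrightarrow\biga(H)$ is a group map, and exactness at $\biga(H)$ is precisely the kernel computation above. Exactness at $\inbi(H)$ (injectivity of the inclusion) is trivial. This yields the exact sequence $1\to\inbi(H)\to\biga(H)\to\brp(\Rep(H))$.

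The main obstacle I expect is not any single hard estimate but rather the bookkeeping of comodule-algebra structures: one must verify carefully that the various incarnations of $H$ as an $H\ot H^{\cop}$-comodule algebra agree — the one produced when specializing \thref{tensorpb} with $L=K=H$, the one in \rmref{diag-coaction}, and the trivial biGalois object $H$ with its regular bicomodule structure twisted as in \eqref{g-twisted} — so that ``$A\sim H$'', ``$[{}_A\mo]$ trivial'', and ``$[A\Box_H B]\mapsto [{}_A\mo][{}_B\mo]$'' all refer to compatible structures. Once these identifications are pinned down, the result follows formally from \thref{tensorpb}(c), \leref{inv-examples}, \rmref{diag-coaction}, and the equivalence $(1)\Leftrightarrow(3)$ of \thref{bigal-pseudonat}.
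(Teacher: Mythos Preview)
Your proposal is correct and follows essentially the same approach as the paper's proof: define $\phi([A])=[{}_A\mo]$, use \leref{inv-examples} for well-definedness, \thref{tensorpb}(c) for multiplicativity, and the equivalence $(1)\Leftrightarrow(3)$ of \thref{bigal-pseudonat} to identify the kernel as $\inbi(H)$. The extra bookkeeping you flag (matching the diagonal comodule structure on $H$ from \rmref{diag-coaction} with the one arising from the trivial biGalois object) is a legitimate point the paper leaves implicit, but otherwise your argument is exactly the one given.
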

\begin{proof} Define the map $\phi: \biga(H) \to  \brp(\Rep(H))$,
$\phi([A])= [{}_A\mo]$, for any isomorphism class $[A]\in \biga(H).$
Here $[{}_A\mo]$ denotes the equivalence class of the bimodule category
${}_A\mo$. By \leref{inv-examples} it follows that $\phi$ is well-defined  and by \thref{tensorpb} 
it is a group map. 
If $\phi([A])$ is the trivial element in $\brp(\Rep(H))$,
by \thref{bigal-pseudonat} it follows that $A\sim H$. \qed
\end{proof}
\begin{rem} The above exact sequence can be thought of as
an analogue  of the sequence studied in \cite{BFRS}.
It would be interesting to give an interpretation of the results obtained in
\cite{BFRS} in this context.
\end{rem}

\begin{cor} If  $H^*$  is a quasi-triangular Hopf algebra, there is an injective group 
homomorphism
$\biga(H) \to  \brp(\Rep(H)).$
\end{cor}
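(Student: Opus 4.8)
The plan is to show that the group homomorphism $\phi:\biga(H)\to\brp(\Rep(H))$, $\phi([A])=[{}_A\mo]$, constructed in \coref{bigal seq} has trivial kernel; being a group map, it is then injective. By \coref{bigal seq} the kernel of $\phi$ equals $\inbi(H)$, the set of isomorphism classes of $(H,H)$-biGalois objects $A$ with $A\sim H$. So everything reduces to proving that $A\sim H$ already forces $A\simeq H$ as biGalois objects.

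First I would translate $A\sim H$ via \thref{bigal-pseudonat}: the equivalence of items (1) and (4) yields a pseudo-natural isomorphism $(\eta,\eta_0):\Fc_A\to\Fc_H$ between the two tensor functors $\Comod(H)\to\Comod(H)$, and $\Fc_H$ is naturally monoidally isomorphic to the identity functor since $H\Box_H X\iso X$. Now I bring in the hypothesis: if $H^*$ is quasi-triangular then $H$ is co-quasitriangular, so $\Comod(H)$ carries a braiding. Replacing $\Comod(H)$ by an equivalent strict braided monoidal category (which affects neither $\biga(H)$ nor the functors $\Fc_A,\Fc_H$ up to monoidal equivalence), \leref{pseudo-vs-nat} converts $(\eta,\eta_0)$ into a genuine natural monoidal isomorphism $\mu:\Fc_A\to\Fc_H$. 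One could equally invoke the Remark after \leref{pseudo-vs-nat}: the braiding of $\Comod(H)$ equips the invertible object $\eta_0$ with a lift to the Drinfel'd center, which is all that the argument requires.

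Finally I would apply Schauenburg's result recalled after \equref{iso-gal} (\cite[Corollary 5.7]{S2}): there is a natural monoidal isomorphism between $\Fc_A$ and $\Fc_H$ if and only if $A\simeq H$ as biGalois objects. Hence $[A]$ is the identity class of $\biga(H)$, so $\inbi(H)=\ker\phi$ is trivial and $\phi$ is injective. The single delicate step in this chain is the passage from a pseudo-natural isomorphism of fibre-type functors to an honest monoidal natural isomorphism; this is exactly the point where co-quasitriangularity enters, and the rest is a concatenation of results already in hand.
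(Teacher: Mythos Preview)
Your argument is correct and follows exactly the same route as the paper: reduce to showing $\inbi(H)$ is trivial via \coref{bigal seq}, use \thref{bigal-pseudonat} to obtain a pseudo-natural isomorphism $\Fc_A\to\Fc_H$, invoke the braiding on $\Comod(H)$ (from $H^*$ quasi-triangular, i.e.\ $H$ co-quasitriangular) together with \leref{pseudo-vs-nat} to upgrade it to a monoidal natural isomorphism, and conclude $A\simeq H$ by \cite[Corollary 5.7]{S2}. Your additional remarks on strictification and the Drinfel'd center alternative are sound but not needed for the argument.
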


\begin{proof} Let $A\in \inbi(H)$. It follows from  \thref{bigal-pseudonat} that there
is a pseudo-natural isomorphism between the monoidal functors $\Fc_A$ and $\Fc_H$.
From \leref{pseudo-vs-nat} it follows that there is a natural  monoidal  isomorphism between
$\Fc_A$ and $\Fc_H$. This implies that $A\simeq H$ as biGalois objects. \qed
\end{proof}

\section{Families of invertible $\Rep(T_q)$-bimodule categories}\selabel{taft-ex}

Let $n\geqslant 2$ be a natural number and $q$  a
$n$-th primitive root of unity. The Taft algebra is
$$T_q=\ku\langle g,x\vert g^n=1, x^n=0, gx=q\,  xg\rangle.$$
The structure of a Hopf algebra on $T_q$ is such that $g$ is group-like,
$x$ is $(1,g)$-primitive, that is, $\Delta(x)=x\ot 1+g\ot x$
with $S(x)=-g^{-1}x$. When $n=2$ note that we recover Sweedler's Hopf algebra $H_4$. The Taft algebra is
isomorphic to a Radford biproduct
\begin{equation} \eqlabel{Taft}
T_q\iso k[x]/(x^n)\# \ku \Zz_{n}
\end{equation}
(send $G\mapsto 1\ot g$ and $X\mapsto  x\ot 1 $), 
where $g\cdot x=q\,  x$. The
following technical result will be needed later.
\begin{lma}\lelabel{iso-cop} There is a Hopf algebra isomorphism
$\phi:T_{q^{-1}} \xrightarrow{\;\; \simeq\;\;}  T_q^{\cop}$.
\end{lma}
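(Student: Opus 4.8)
The plan is to write down an explicit candidate isomorphism $\phi:T_{q^{-1}}\to T_q^{\cop}$ on the algebra generators and verify it is a well-defined Hopf algebra map which is bijective. First I would fix notation: let $g,x$ be the generators of $T_q$ (so $g^n=1$, $x^n=0$, $gx=qxg$, $\Delta(x)=x\ot 1+g\ot x$), and let $\gamma,\chi$ be the generators of $T_{q^{-1}}$ (so $\gamma^n=1$, $\chi^n=0$, $\gamma\chi=q^{-1}\chi\gamma$, $\Delta(\chi)=\chi\ot 1+\gamma\ot\chi$). Recall that $T_q^{\cop}$ has the same algebra structure as $T_q$ but comultiplication $\Delta^{\cop}=\tau\circ\Delta$, so in $T_q^{\cop}$ we have $\Delta^{\cop}(x)=1\ot x+x\ot g$, and the antipode of $T_q^{\cop}$ is $S^{-1}$.

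Next I would propose the assignment $\phi(\gamma)=g^{-1}$ and $\phi(\chi)=g^{-1}x$ (equivalently $xg^{-1}$ up to a scalar; one checks which normalization makes the relations and the coproduct work out, and a scalar multiple of $g^{-1}x$ is the natural guess since $g^{-1}x$ is skew-primitive of the right type). To see $\phi$ is a well-defined algebra map I must check the three defining relations of $T_{q^{-1}}$ are sent to $0$: $\phi(\gamma)^n=g^{-n}=1$; $\phi(\chi)^n=(g^{-1}x)^n$, which I would compute using $xg^{-1}=q\, g^{-1}x$ (from $gx=qxg$) to pull all the $g^{-1}$'s to one side, obtaining a nonzero $q$-power times $g^{-n}x^n=0$; and $\phi(\gamma)\phi(\chi)=g^{-1}(g^{-1}x)=g^{-2}x$ versus $q^{-1}\phi(\chi)\phi(\gamma)=q^{-1}g^{-1}x g^{-1}=q^{-1}\cdot q\cdot g^{-2}x=g^{-2}x$, so the commutation relation holds. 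Then I would check compatibility with the coalgebra structure: $\phi$ must intertwine $\Delta_{T_{q^{-1}}}$ with $\Delta^{\cop}_{T_q}$ and the counits. On $\gamma$: $\Delta^{\cop}(g^{-1})=g^{-1}\ot g^{-1}=(\phi\ot\phi)\Delta(\gamma)$, fine since $g^{-1}$ is grouplike. On $\chi$: I need $(\phi\ot\phi)(\chi\ot1+\gamma\ot\chi)=g^{-1}x\ot 1+g^{-1}\ot g^{-1}x$ to equal $\Delta^{\cop}(g^{-1}x)=(\Delta^{\cop}g^{-1})(\Delta^{\cop}x)=(g^{-1}\ot g^{-1})(1\ot x+x\ot g)=g^{-1}\ot g^{-1}x+g^{-1}x\ot g^{-1}g=g^{-1}\ot g^{-1}x+g^{-1}x\ot1$; these agree. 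Counit: $\varepsilon(\phi(\gamma))=1=\varepsilon(\gamma)$ and $\varepsilon(\phi(\chi))=0=\varepsilon(\chi)$. A Hopf algebra map between Hopf algebras is automatically compatible with antipodes, so nothing more is needed there; alternatively one checks $S^{-1}_{T_q}(g^{-1}x)$ against $\phi(S_{T_{q^{-1}}}(\chi))=\phi(-\gamma^{-1}\chi)$ directly.

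Finally, for bijectivity I would note that $\phi$ is a linear map between vector spaces of the same finite dimension $n^2$, so it suffices to show surjectivity: $g=\phi(\gamma^{-1})=\phi(\gamma)^{n-1}$ lies in the image, hence $x=g\cdot g^{-1}x=\phi(\gamma^{-1})\phi(\chi)=\phi(\gamma^{-1}\chi)$ lies in the image, and since $g,x$ generate $T_q$ as an algebra, $\phi$ is surjective, hence an isomorphism. I do not anticipate a genuine obstacle here; the only mildly delicate point is pinning down the correct normalizing scalar on $\phi(\chi)$ so that the skew-primitivity matches exactly in the $\cop$ Hopf algebra, and then carrying out the $q$-commutation bookkeeping in the computation of $(g^{-1}x)^n$, but both are routine once the relation $xg^{-1}=q\,g^{-1}x$ is in hand.
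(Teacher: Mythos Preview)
Your proposal is correct and takes essentially the same approach as the paper: define $\phi$ explicitly on generators and verify it is a bialgebra map (hence automatically a Hopf map) and bijective. The paper's map sends the grouplike $g\mapsto g$ (in its convention the grouplike of $T_{q^{-1}}$ is $g^{-1}$, matching your $\gamma\mapsto g^{-1}$) and the skew-primitive $y\mapsto xg^{-1}$, which differs from your $\chi\mapsto g^{-1}x$ only by the harmless scalar $q$; your verifications are more detailed than the paper's, which simply asserts the map is a well-defined Hopf algebra isomorphism.
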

\begin{proof} Let us assume that $T_q$ is generated by elements $g,x$ such that
$g^n=1, x^n=0, gx=q\,  xg$ and
 $$\Delta(g)=g\ot g,\quad \Delta(x)=x\ot 1+g\ot x,$$
and $T_{q^{-1}}$ is generated by elements $g,y$ such that
$g^n=1, y^n=0, g^{-1}y=q^{-1}\,  yg^{-1}$ and the coproduct is determined by
 $$\Delta(g)=g\ot g,\quad \Delta(y)=y\ot 1+g^{-1}\ot y.$$
 (Strictly speaking, in $T_{q^{-1}}$ we should take $g^{-1}$ for a generator rather 
than $g$, but all the relations remain the same.) 
The algebra map $\phi: T_{q^{-1}} \to  T_q^{\cop}$ determined by
$ \phi(g)=g, \quad \phi(y)=x g^{-1},$
is a well-defined Hopf algebra isomorphism.\qed
\end{proof}

As we are interested in finding invertible $\Rep(T_q)$-bimodule categories, which are 
left $\Rep(T_q\ot T_q^{cop})$-module categories, from now on we shall consider 
the Hopf algebra $H=T_q\ot T_{q^{-1}}$.

\smallbreak

Let $V_1$ and $V_2$ be the one dimensional vector spaces spanned by
 $x$ and $y$ respectively,
$\Zz_{n}=\langle g\rangle=\langle  g^{-1} \rangle$ with $g^n=1$ and let 
$G=\Zz_{n}\times \Zz_{n}=
\langle g\rangle\times\langle g\rangle$.
 The vector spaces $V_1$ and $V_2$ are $G$-modules via
$$(g^i, g^j)\cdot x = g^i\cdot x=q^i x \quad\textnormal{and}\quad (g^i, g^j)\cdot y = g^j\cdot y=q^{j } y.$$
The algebra $H$ is generated by the elements
$\{f \in G, x, y\}$ subject  to relations
$$x^n=0=y^n, \quad xy=yx, \quad fx=(f\cdot x) f, \quad fy=(f\cdot y)f.$$
Its Hopf algebra structure is given by
$$\Delta(x)=x\ot 1 + (g,1)\ot x, \quad  \Delta(y)=y\ot 1 + (1,g^{-1})\ot y,
\quad\Delta(f)=f\ot f.$$
In other words, $H=\nic(V)\# \ku G$ where  $\nic(V)$ is the
Nichols algebra of the Yetter-Drinfeld module $V=V_1\oplus V_2$ over $\ku G$. The coaction
$\delta:V_1\oplus V_2\to \ku G\otk (V_1\oplus V_2)$ is given by
$$\delta(v)=(g,1)\ot v, \quad \delta(w)= (1,g^{-1})\ot w,$$
for all $v\in V_1, w\in V_2$. Let us define a new Hopf algebra that will be used later. 
If $\chi_1,\chi_2:G\to \ku$ are
characters then $V$ has a new action of $G$ as follows. For
any $f\in G$, $v\in V_1$, $w\in V_2$
$$f \rhd v= \chi_1(f)\; f\cdot v, \quad f \rhd w= \chi_2(f)\; f\cdot w.$$
Let $\chi_1, \chi_2$ be characters such that $\chi_1(1,g^{-1})\chi_2(g,1) =1$. In this case
$V $ with the new action and the same coaction is  a Yetter-Drinfeld module over $\ku G$ that we
 shall denote by $V_{(\chi_1,\chi_2)}$. Observe
that $V$ and also $V_{(\chi_1,\chi_2)}$ are quantum linear spaces, see \cite{AS}.

\begin{defn} If $\chi_1,\chi_2:G\to \ku$ are
characters such that $\chi_1(1,g^{-1})\chi_2(g,1)=1 $ we denote $H_{(\chi_1,\chi_2)}= \nic(V_{(\chi_1,\chi_2)})\# \ku G$.
\end{defn}

The algebra $H_{(\chi_1,\chi_2)}$ is generated by  elements
$\{f \in G, x, y\}$ subject  to relations
$$x^n=0=y^n, \quad xy=\chi_2(g,1)\; yx, \quad fx=(f\rhd x) f, \quad fy=(f\rhd y)f.$$
Its coproduct  is the same as the coproduct of $H$.

\subsection{Twisting $T_q\ot T_{q^{-1}}$}

We will next investigate the Hopf  algebra $(T_q\ot T_{q^{-1}})^{[\sigma]}$
for Hopf 2-cocycles $\sigma$ obtained as in \leref{lift-twist}. Let $\psi\in Z^2(G, \ku^{\times})$.
 We define
$\chi_1,\chi_2: G\to k^{\times}$ characters on $G$, via
\begin{equation}\eqlabel{charact-psi}
 \chi_1(f)=\frac{\psi(f, (g,1))} {\psi((g,1), f)} \quad\textnormal{and} \quad
\chi_2(f)=\frac{\psi(f, (1, g^{-1}))} {\psi((1, g^{-1}),f)}.
\end{equation}

The proof of the following result is straightforward.
\begin{prop}
Assume $\psi\in Z^2(G, k^{\times})$ is a 2-cocycle. Let  $\sigma: H\ot H\to \ku$ be a
 2-cocycle coming from $\psi$ as in  \leref{lift-twist} and $\chi_1, \chi_2$ characters
in $G$ defined in \equref{charact-psi}. There
is an isomorphism of Hopf algebras $H^{[\sigma]}\simeq H_{(\chi_1,\chi_2)}$.\qed
\end{prop}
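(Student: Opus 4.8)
\textbf{Proof strategy for the statement $H^{[\sigma]}\simeq H_{(\chi_1,\chi_2)}$.}

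The plan is to write down an explicit algebra map and check that it respects all defining relations and the coproduct. Recall from \leref{lift-twist} that $\sigma=\sigma_\psi$ vanishes on pairs of homogeneous elements unless both lie in degree zero $H(0)=\ku G$, in which case it agrees with $\psi$. Since the twisted product is computed by the formula \equref{sigma-mult}, I would first note that on the group-like part nothing changes: for $f,h\in G$ all the Sweedler components of $f$ and $h$ equal $f$ and $h$, so $f\cdot_\sigma h=\sigma(f,h)\sigma^{-1}(f,h)\,fh=fh$; thus $\ku G\subseteq H^{[\sigma]}$ is still a group algebra on $G$. Next I would compute $x\cdot_\sigma x$, $y\cdot_\sigma y$, $x\cdot_\sigma y$ and $y\cdot_\sigma x$, as well as $f\cdot_\sigma x$ and $x\cdot_\sigma f$, using $\Delta(x)=x\ot 1+(g,1)\ot x$, $\Delta(y)=y\ot 1+(1,g^{-1})\ot y$, and the normalization \equref{2-cocyclo-g} for $\psi$ (which I may assume holds within its cohomology class, and the Hopf cocycle depends on $\psi$ only through its class). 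Because $x$ and $y$ have only one ``lower'' leg in each tensor slot of $\Delta^{(2)}$ that is not group-like, and $\sigma$ kills any argument containing a positive-degree term, these computations collapse to a single scalar: e.g. $f\cdot_\sigma x=\sigma(f,(g,1))\sigma^{-1}(f,f)\,fx=\psi(f,(g,1))\,fx$ and similarly $x\cdot_\sigma f=\psi((g,1),f)\,xf$ (here I use that $x\_1\in\{x,(g,1)\}$ and only the group-like choice survives in $\sigma$). Carrying this out gives $x^n=0$, $y^n=0$ still hold (a degree-$n$ product of $x$'s twists the original relation by a scalar), and
\begin{align*}
f\cdot_\sigma x\cdot_\sigma f^{-1}&=\chi_1(f)\,(f\cdot x),\\
x\cdot_\sigma y&=\chi_2(g,1)^{\pm1}\,y\cdot_\sigma x
\end{align*}
after substituting \equref{charact-psi}; matching signs/exponents here against the relation $xy=\chi_2(g,1)\,yx$ of $H_{(\chi_1,\chi_2)}$ is the one bookkeeping point to get right.

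With these computations in hand, I would define $\Phi\colon H_{(\chi_1,\chi_2)}\to H^{[\sigma]}$ on generators by $\Phi(f)=f$ for $f\in G$, $\Phi(x)=x$, $\Phi(y)=y$, and observe that the relations of $H_{(\chi_1,\chi_2)}$ (namely $x^n=0=y^n$, $xy=\chi_2(g,1)yx$, $fx=(f\rhd x)f$, $fy=(f\rhd y)f$) are exactly the relations just verified to hold in $H^{[\sigma]}$ for the elements $x,y,f$; hence $\Phi$ is a well-defined algebra map. It is surjective since $\{f,x,y\}$ generate $H^{[\sigma]}$ as an algebra (the underlying vector space, and a generating set, are unchanged by a cocycle twist), and both algebras have the same dimension $n^2\cdot|G|$, so $\Phi$ is an algebra isomorphism. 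Finally, $H^{[\sigma]}$ and $H_{(\chi_1,\chi_2)}$ have, by construction, literally the same coalgebra structure as $H$ (twisting by a Hopf $2$-cocycle changes only the multiplication, and $H_{(\chi_1,\chi_2)}$ was defined with the coproduct of $H$), and $\Phi$ is the identity on the coalgebra $H$; therefore $\Phi$ is a coalgebra map as well, hence a bialgebra and thus a Hopf algebra isomorphism.

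The only genuine obstacle is purely computational: pinning down the scalars in $x\cdot_\sigma y$, $y\cdot_\sigma x$, $f\cdot_\sigma x$, $x\cdot_\sigma f$ from \equref{sigma-mult}, and checking that the normalization conditions \equref{2-cocyclo-g} for $\psi$ (used to reduce the Hopf cocycle $\sigma_\psi$ on mixed arguments and to handle $\sigma^{-1}$) make these match the characters $\chi_1,\chi_2$ of \equref{charact-psi} and the relation $xy=\chi_2(g,1)yx$. Since the paper describes this proof as ``straightforward,'' I expect no conceptual difficulty beyond organizing this finite check, and I would present it by displaying the handful of twisted products and then reading off $\Phi$.
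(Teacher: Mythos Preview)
Your approach is correct and is exactly the routine verification the paper omits under ``straightforward'' (no proof is given there). Two minor points: in $f\cdot_\sigma x$ the factor you wrote as $\sigma^{-1}(f,f)$ should be $\sigma^{-1}(f,1)=1$ (your final answer $\psi(f,(g,1))\,fx$ is right regardless), and for the relation $x^n=0$ you can make the ``twists by a scalar'' remark precise by noting inductively that the $k$-fold $\sigma$-power of $x$ equals a nonzero scalar times $x^k$, since in \equref{sigma-mult} the only summand with both outer Sweedler legs in $H(0)$ is $(g,1)^k\otimes x^k\otimes 1$ against $(g,1)\otimes x\otimes 1$.
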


\subsection{ Homogeneous coideal subalgebras in Taft Hopf algebra}

Due to \cite[Theorem 6.1]{Sk} any coideal subalgebra of a finite-dimensional Hopf algebra $H$ 
 is an $H$-simple comodule algebra. Its \emph{lifting} will also be of that type and thus it will 
determine an exact indecomposable $\Rep(H)$-bimodule category. Any exact indecomposable 
module category  emerges  in this way. This is why a fundamental piece of 
information needed to compute exact $\Rep(T_q)$-bimodule categories
is the classification of its coideal subalgebras. 
This is the main goal of this section. As before, we set $H=T_q\ot T_{q^{-1}}$. 

\medbreak
Note that $H(1)=(V_1\oplus V_2)\ot \ku G$. For $(v_1, v_2)=(\alpha x,\beta y)\in V_1\oplus V_2$,
with $\alpha, \beta\in \ku$, we will denote
$$[(v_1,v_2)]=v_1+v_2(g,g)\in H(1)\quad\textnormal{and}\quad
\widetilde{[(v_1,v_2)]}=v_2+v_1(g^{-1},g^{-1})\in H(1).$$

\begin{rem} \rmlabel{corchetes}
The following holds:
\begin{equation}
[(v_1,v_2)]^n=\widetilde{[(v_1,v_2)]}^n=0
\end{equation}
\begin{equation} \eqlabel{x-y}
\Delta([(v_1,v_2)])=v_1\ot 1 + v_2(g,g)\ot (g,g) + (g,1)\ot [(v_1,v_2)]
\end{equation}
\begin{equation} \eqlabel{tilde-x-y}
\Delta(\widetilde{[(v_1,v_2)]})=v_2\ot 1 + v_1(g^{-1},g^{-1})\ot (g^{-1},g^{-1}) +
(1,g^{-1})\ot \widetilde{[(v_1,v_2)]}.
\end{equation}
Observe that if $K$ is a homogeneous left coideal subalgebra of $H$
and $[(v_1,v_2)]\in K$ or $\widetilde{[(v_1,v_2)]}\in K$ where some $v_i$ is not null then, since both \equref{x-y} and \equref{tilde-x-y} are elements in $H(0)\ot K(1) \hspace{0,1cm}
\oplus\hspace{0,1cm} H(1)\ot K(0)$,  it follows that $(g, g)\in K(0)$.
\end{rem}

\begin{defn}
A  coideal subalgebra datum is a collection $(W^1, W^2, W^3, F)$ such that
\begin{enumerate}
\item
$W=W^1\oplus W^2\oplus W^3$ is a subspace of $V_1\oplus V_2$ such that 
$$W\cap V_1=W^1, \quad W\cap V_2=W^2;$$ 
\item $W^3\subseteq V_1\oplus V_2$ is a subspace such that 
$$W^3\cap W^1\oplus W^2=0,\quad W^3\cap V_1=0=W^3\cap V_2;$$
\item $F\subseteq G$ is a subgroup that leaves invariant all  subspaces $W^i$, $i=1,2,3$;
\item if $W^3\not=0$ then $(g, g)\in F$;
\end{enumerate}
We denote by $C(W^1, W^2, W^3, F)$ the subalgebra of $H$ generated by $\ku F$ and elements in $W^1\oplus W^2$
and $\{ [w], \widetilde{[w]} \hspace{0,2cm}:  w\in W^3\}$.
\end{defn}
If   $\chi_1,\chi_2:G\to \ku$ are
characters such that $\chi_1(1,g^{-1})\chi_2(g,1)=1 $ and $(W^1, W^2, W^3, F)$
is a coideal subalgebra datum, we shall denote by $C_{(\chi_1,\chi_2)}(W^1, W^2, W^3, F)$
the subalgebra of $H_{(\chi_1,\chi_2)}$ generated by $\ku F$ and elements in $W^1\oplus W^2$
and $\{ [w], \widetilde{[w]} \hspace{0,2cm}:  w\in W^3\}$.

\begin{rem}\label{remark-coi1} If $(W^1, W^2, W^3, F)$ is a coideal subalgebra datum then we conclude that
 if $W^3\neq 0$ then $W^1=W^2=0$. 
 Indeed, suppose that $W^3\neq 0$ and $W^1\neq 0$. Then $W^1\oplus W^3=W$ 
and $(W^1\oplus W^3)\cap V_2\neq 0$, 
since $V_1$ has dimension $1$. 
This implies that $W^2\neq 0$, but if $W^1\neq 0$ and $W^2\neq 0$,
 then it must be $W^3=0$, contradiction. 
\end{rem}

\begin{lma}
The algebra $C(W^1, W^2, W^3, F)$ (resp.  $C_{(\chi_1,\chi_2)}(W^1, W^2, W^3, F)$ ) is a homogeneous
left coideal subalgebra of $H$ (resp.  $H_{(\chi_1,\chi_2)}$). \qed
\end{lma}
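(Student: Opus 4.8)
The plan is to verify directly the two defining properties of a homogeneous left coideal subalgebra: that $C(W^1,W^2,W^3,F)$ (and likewise $C_{(\chi_1,\chi_2)}(W^1,W^2,W^3,F)$) is a graded subalgebra sitting inside the coradical filtration in the way demanded by the definition of ``homogeneous'', and that it is a left coideal, i.e. $\Delta(C)\subseteq H\otk C$. Since both algebras are generated by $\ku F$ together with the degree-zero elements of $W^1\oplus W^2$ and the elements $[w],\widetilde{[w]}$ for $w\in W^3$, and since $\Delta$ is an algebra map while $H\otk C$ is a subalgebra of $H\otk H$, it suffices to check the coideal condition on this finite generating set.

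First I would treat homogeneity. The group-like elements of $F$ lie in $H(0)=\ku G$; the elements of $W^1\oplus W^2\subseteq V_1\oplus V_2$ and the elements $[w]=v_1+v_2(g,g)$, $\widetilde{[w]}=v_2+v_1(g^{-1},g^{-1})$ all lie in $H(1)=(V_1\oplus V_2)\ot\ku G$ by construction. Products of $k$ such generators land in $H(k)$ because $H$ is coradically graded, so setting $C(i)=C\cap H(i)$ gives a grading with $C(i)\subseteq H(i)$; one should note that $C$ is spanned by monomials in the generators and a monomial of ``length'' $k$ lies in $H(k)$, so the grading is the obvious one. This part is routine.

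The substantive point is the left coideal property, checked on generators. For $f\in F$ one has $\Delta(f)=f\ot f\in H\otk C$ since $f\in F\subseteq C$. For a degree-one generator $v\in W^1\subseteq V_1$, $\Delta(v)=v\ot 1+(g,1)\ot v$; here $v\in C$ and $1\in C$, so the only issue is whether $(g,1)\in C$ — but it is not needed, because the term $(g,1)\ot v$ already has its right tensorand $v$ in $C$, which is all the left coideal condition requires. The same remark applies to $w\in W^2$. For $[w]$ and $\widetilde{[w]}$ with $w\in W^3$, formulas \equref{x-y} and \equref{tilde-x-y} give
$$\Delta([w])=v_1\ot 1+v_2(g,g)\ot(g,g)+(g,1)\ot[w],$$
and by condition (4) of the datum we have $(g,g)\in F\subseteq C$, while the middle right-hand tensorands are $(g,g)\in C$ and the last is $[w]\in C$; similarly for $\widetilde{[w]}$. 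Hence $\Delta$ of every generator lies in $H\otk C$, and since $H\otk C$ is a subalgebra of $H\otk H$ and $\Delta$ is multiplicative, $\Delta(C)\subseteq H\otk C$, so $C$ is a left coideal. The argument for $C_{(\chi_1,\chi_2)}$ is identical: the comultiplication of $H_{(\chi_1,\chi_2)}$ agrees with that of $H$ on the generators, so the same computation applies verbatim; only the algebra relations (the $q$-commutation constants) differ, and those play no role in the coideal check.

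The main obstacle, such as it is, is bookkeeping rather than conceptual: one must make sure the spanning set of $C$ is closed enough that checking $\Delta$ on the chosen generators genuinely suffices — i.e. that $C$ really is generated as an algebra by the listed elements and that $H\otk C$ is closed under multiplication — and one must invoke conditions (3) and (4) of the coideal subalgebra datum at exactly the right spots (condition (3) guarantees $f\cdot w$ stays in $W^i$ so that $\ku F$-conjugates of the generators remain in $C$, keeping $C$ a subalgebra; condition (4) supplies $(g,g)\in F$ needed for $\Delta([w])$ and $\Delta(\widetilde{[w]})$ to close up). With those used, the verification is a direct computation on the finite generating set.
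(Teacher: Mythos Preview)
The paper gives no proof of this lemma --- the statement ends with an immediate \qed\ --- so there is nothing to compare your argument against; your direct verification (check $\Delta$ on the finite generating set using that $\Delta$ is multiplicative and $H\otk C$ is a subalgebra, then read off homogeneity from the degrees of the generators) is exactly the routine computation the authors are leaving to the reader. One tiny slip: you call the elements of $W^1\oplus W^2$ ``degree-zero'' in your first paragraph, but they lie in $H(1)$, as you correctly say a few lines later.
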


\begin{thm}\label{class-hcoid}
Any homogeneous left coideal subalgebra $K=\oplus_{i=0}^m K(i)$ in $ H$ (resp.  $H_{(\chi_1,\chi_2)}$)
 is of the form $K=C(W^1, W^2, W^3, F)$  (resp.   $C_{(\chi_1,\chi_2)}(W^1, W^2, W^3, F)$ ),
for some coideal subalgebra datum $(W^1, W^2, W^3, F)$.
\end{thm}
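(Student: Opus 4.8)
The plan is to take an arbitrary homogeneous left coideal subalgebra $K=\oplus_{i=0}^m K(i)$ of $H$ (the argument for $H_{(\chi_1,\chi_2)}$ is identical, only the commutation constants change) and extract from it a coideal subalgebra datum $(W^1,W^2,W^3,F)$, then show $K=C(W^1,W^2,W^3,F)$. First I would set $F=G(K):=\{f\in G: f\in K\}$; since $K(0)\subseteq H(0)=\ku G$ is a coideal subalgebra of the group algebra, it is spanned by a subgroup, so $F$ is a subgroup of $G$. Next I analyze the degree-one part $K(1)\subseteq H(1)=(V_1\oplus V_2)\ot\ku G$. Using that $\Delta$ sends $K(1)$ into $H(0)\ot K(1)\oplus H(1)\ot K(0)$ and the explicit coproducts \equref{x-y}, \equref{tilde-x-y}, I would argue that every homogeneous degree-one element of $K$, after right-multiplying by an appropriate group element of $F$, has the form $[w]$ or $\widetilde{[w]}$ for some $w\in V_1\oplus V_2$ — this is where \rmref{corchetes} enters, forcing $(g,g)\in F$ whenever such an element with nonzero $V_2$- (resp. $V_1$-) component and nonzero complementary component appears. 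So define $W\subseteq V_1\oplus V_2$ to be the set of $w$ with $[w]\in K(1)\cdot\ku F$ (equivalently $\widetilde{[w]}\in K(1)\cdot\ku F$), set $W^1=W\cap V_1$, $W^2=W\cap V_2$, and let $W^3$ be a complement of $W^1\oplus W^2$ in $W$; conditions (1)–(2) of the datum hold by construction and the dimension count of \rmref{remark-coi1}, and condition (4) is exactly the \rmref{corchetes} observation. For (3), $F$ leaves each $W^i$ invariant because conjugation by $f\in F$ preserves $K$ and acts on $H(1)$ through the $G$-action on $V_1\oplus V_2$ (up to the grouplike factors), hence preserves $W$ and its intersections with $V_1,V_2$; one also checks $W^3$ can be chosen $F$-stable since $F$ acts semisimply.

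The containment $C(W^1,W^2,W^3,F)\subseteq K$ is then immediate: all the generators $\ku F$, $W^1\oplus W^2$, and $\{[w],\widetilde{[w]}:w\in W^3\}$ lie in $K$ by construction, and $K$ is an algebra. For the reverse inclusion I would induct on the coradical degree. Suppose $k\in K(i)$ with $i\geq 1$; I want to write $k$ as a sum of products of the listed generators. The idea is that $\Delta(k)-1\ot k-k\ot 1 \in \bigoplus_{0<j<i} H(j)\ot K(i-j)$ (coideal property plus homogeneity), and by the inductive hypothesis the $K(i-j)$ factors are already in $C(W^1,W^2,W^3,F)$; combined with a careful analysis of the structure of $H$ as the bosonization $\nic(V)\#\ku G$ of a quantum linear space — in which a PBW-type basis in the generators $x,y$ and group elements is available — one deduces that $k$ itself is a linear combination of such products, i.e. $k\in C(W^1,W^2,W^3,F)$. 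Concretely, one shows that the $i$-th graded piece of $C(W^1,W^2,W^3,F)$ accounts for all of $K(i)$ by comparing it against the description of $K(1)$ already obtained and using multiplicativity: any monomial in $K(i)$ has its "leading" degree-one constituents forced into $W$ by applying suitable projections built from $\Delta$.

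The main obstacle I anticipate is the reverse inclusion $K\subseteq C(W^1,W^2,W^3,F)$ in the mixed case where $W^3\neq 0$: here $[w]$ and $\widetilde{[w]}$ for $w\in W^3$ are genuinely "diagonal" elements mixing the two tensor factors of $T_q\ot T_{q^{-1}}$, and one must verify that no new independent degree-one (or higher) elements can appear in $K$ beyond those generated by $W^1\oplus W^2$ and the bracketed elements of $W^3$ — in particular that $K(1)$ cannot contain, say, a "pure" $x$ and a diagonal $[w]$ with $w$ having a nonzero $V_1$-component simultaneously, which is precisely the content of \rmref{remark-coi1}. Disentangling which elements of $H(1)$ can lie in $K(1)$ given that $K(0)$ may or may not contain $(g,g)$, and organizing the bookkeeping of the $G$-action and the quantum-linear-space relations $x^n=y^n=0$, $xy=yx$, will be the technical heart of the argument; everything else is a routine induction using the coalgebra filtration and the explicit coproduct formulas.
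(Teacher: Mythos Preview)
Your plan is correct and follows essentially the same route as the paper: identify $F$ from $K(0)$, analyze $K(1)$ via the coproduct to extract the $W^i$, and then show $K$ is generated as an algebra by $K(0)\oplus K(1)$. The paper organizes the analysis of $K(1)$ by decomposing it as a $\ku G$-comodule via $(\pi\ot\id)\Delta$ (which systematizes the case analysis you sketch), and for the generation step uses exactly the ``suitable projection built from $\Delta$'' you allude to, namely $(\id\ot p)\Delta$ with $p:H\to H(1)$ applied to a PBW expression, rather than a bare induction on degree.
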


\begin{proof} We shall assume that $\chi_1,\chi_2$ are trivial, since the proof for non-trivial
 characters is completely analogous.

Given that $K(0)\subseteq \ku G$ is a left coideal subalgebra, it is $K(0)=\ku F$
for some subgroup $F\subseteq G$.
If $K(1)=0$, then $K=\ku F$. Indeed, if $x\in K(2)$ then $\Delta(x)\in H(0)\ot K(2)\hspace{0,1cm} \oplus
\hspace{0,1cm} H(2)\ot K(0)$, therefore
$x\in H(0)\oplus H(1)$, and since  $(H(0)\oplus H(1))\cap H(2)=0$, it follows $x=0$. Similarly, one proves that $K(n)=0$
 for all $n$.

Suppose that $K(1)\not=0$. The vector space $K(1)$ is a $\ku G$-subcomodule of
 $(V_1\oplus V_2)\ot \ku G$ via
$$(\pi\ot \id)\Delta: K(1)\to  \ku G\ot K(1),$$
where $\pi: H\to \ku G$ is the canonical projection. We may write $K(1)=\oplus_{f\in G}K(1)_{f}$
where $K(1)_{f}= \{ x\in K(1) \vert (\pi\ot \id)\Delta(x)=f\ot x\}$. Then it is
$$K(1)_{f}\subseteq V_1\ot \ku \langle(g^{-1},1)f \rangle \hspace{0,1cm}
\oplus \hspace{0,1cm} V_2\ot  \ku\langle(1,g)f \rangle.$$
In particular we have:
$$K(1)_{(g,1)}=W^1 \oplus \widetilde{W}^2(g,g) \oplus U^3 \qquad
K(1)_{(1,g^{-1})}=W^2 \oplus \widetilde{W}^1(g^{-1},g^{-1}) \oplus \widetilde{U}^3.$$
Here $W^1$ is the intersection of $K(1)_{(g,1)}$ with $V_1$, $\widetilde{W}^2(g,g)$
 is the intersection
of $K(1)_{(g,1)}$ with $V_2\ot  \ku\langle(g,g)\rangle$, and $U^3$ is a direct complement.
 Concretely, $U^3$
is a subspace of $V_1\oplus V_2\ot \ku\langle(g,g)\rangle$ consisting of elements of the form
$[w]$, where $w\in W^3$ and $W^3\subseteq V_1\oplus V_2$. Given that
$U^3\cap W^1\oplus\widetilde{W}^2(g, g)=0$,
it follows that $W^3\cap W^1\oplus\widetilde{W}^2=0$. \medbreak

 Analogously, for $K(1)_{(1,g^{-1})}$ we have that $W^2$ is the
intersection of $K(1)_{(1,g^{-1})}$ with $V_2$, $\widetilde{W}^1(g^{-1},g^{-1})$ is the intersection of
$K(1)_{(1,g^{-1})}$ with $V_1\ot k\langle(g^{-1},g^{-1})\rangle$, and $\widetilde{U}^3$ is a direct complement. The
elements of $\widetilde{U}^3$ are of the form $\widetilde{[w]}$,
 where $w\in\widetilde{W}^3$ and  $\widetilde{W}^3\subseteq V_1\oplus V_2$.
 We next prove the following two results:

\begin{claim} \label{Claim 6.1}
If any of $\widetilde{W}^1, \widetilde{W}^2, \widetilde{W}^3$ or $W^3$ is different from $0$,
 then $(g,g)\in F$.  If $\widetilde{W}^1\not=0$,
 then $\widetilde{W}^1=W^1$. If $\widetilde{W}^2\not=0$, then $\widetilde{W}^2=W^2$.
\end{claim}

\begin{proof}
Take $0\not=(v_1, v_2)\in W^3$. Then $0\not=[(v_1, v_2)]\in U^3$ and by \rmref{corchetes}
 $(g, g)\in F$.
The proof is analogous if $\widetilde{W}^1, \widetilde{W}^2$ or $\widetilde{W}^3$ are different
from zero. For the second claim take $w\in\widetilde{W}^1$. Then $w(g^{-1},g^{-1})\in K(1)_{(1,g^{-1})}$ and
$w\in K(1)_{(g,1)}$ and it must be $w\in W^1$. Similarly,
one proves the other inclusion and we get $\widetilde{W}^1=W^1$. The other equality is proven analogously.
\qed\end{proof}

\begin{claim}  $K(1)=W^1 F \oplus W^2 F \oplus U^3 F$.
\end{claim}

\begin{proof}
Take $f \in G$ and $0\not=x\in K(1)_{f}$. Then for some $v_1\in V_1$ and $v_2\in V_2$ it is
$$x=v_1(g^{-1},1)f  + v_2(1,g)f$$
and
$$ \Delta(x)=v_1(g^{-1},1)f \ot (g^{-1},1)f + v_2(1,g )f \ot (1,g)f + f\ot x $$
is an element in $H(0)\ot K(1)\hspace{0,1cm} \oplus \hspace{0,1cm} H(1)\ot K(0)$.
If $v_1\not=0$, then $(g^{-1},1)f
\in F$ and hence $xf^{-1}(g,1)\in K(1)_{(g,1)}$ and $x\in K(1)_{(g,1)}F \subseteq
W^1F \oplus \widetilde{W}^2(g,g)F \oplus U^3F \subseteq W^1F \oplus W^2F \oplus U^3F$ - the
latter inclusion is due to Claim  \ref{Claim 6.1}.
If $v_1=0$, then $v_2\not=0$ and it follows $(1,g)f\in F$. Thus
$xf^{-1}(1,g^{-1})\in K(1)_{(1,g^{-1} )}$.
If $\widetilde{W}^1=\widetilde{U}^3=0$, then $xf^{-1}(1,g^{-1})\in W^2$ and $x\in W^2F$ and the claim follows.
If any of $\widetilde{W}^1$ and $\widetilde{U}^3$ is not zero, then $(g,g)\in F$.
But then $(g^{-1},1)f=(g^{-1},g^{-1}) (1,g)f\in F$ and thus $xf^{-1}(g,1)\in K(1)_{(g,1)}$.
It follows $x\in K(1)_{(g,1)}F$, which we already have proved is a subspace of $W^1F \oplus W^2F \oplus U^3F$.
\qed\end{proof}

To finish the proof of the Theorem we will prove that $K$ is generated as an algebra by $K(0)$ and
$K(1)$, which yields $K=C(W^1, W^2, W^3, F)$.
 Let $B=\{b_i \}$ be a basis of $V_1\oplus V_2$. For any
$v_1\in V_1$ and $v_2\in V_2$ it is $(v_1, 0)=\sum_i\alpha_i b_i$
and $(0, v_2)=\sum_i\beta_i b_i$ for some
$\alpha_i, \beta_i\in k$. Then $v_1=\sum_i\alpha_i [b_i]$ and
$v_2=\sum_i\beta_i [b_i](g^{-1}, g^{-1})$. So we have
that $H$ is generated as an algebra by the set
$$\{[b_i], f: \hspace{0,1cm} b_i\in B, f\in G\}.$$
Now, let $\{b_1, ..., b_r\}$ be a basis of $W=W^1\oplus W^2\oplus W^3$ and extend it to a basis
$\{b_1, ..., b_t\}$ of $V_1\oplus V_2$ with $r\leq t$. For $n>1$ an arbitrary $x\in K(n)$
 has the form
$$x=\sum_{{s_j\in\{0,1\}} \atop {f_i\in G}} \alpha_{s_1, ..., s_t,i} [b_1]^{s_1}
[b_2]^{s_2}\cdots [b_t]^{s_t} f_i$$
for some $\alpha_{s_1, ..., s_t,i}\in k$, where $s_1+ ...+ s_t=n$.
Let $p: H\to H(1)$ be the canonical projection. Then
 $$(\id\ot p)\Delta(x)=\sum_l \sum_{{s_j\in\{0,1\}} \atop {f_i\in G}} \alpha_{s_1, ..., s_t,i}
h_{s_1, ..., s_t,i,l}\ot [b_l]f_i$$ 
for some  $0\not= h_{s_1, ..., s_t,i,l}\in H(n-1)$  is an element in $H(n-1)\ot K(1)$. 
So for all $l>r$ with $s_l=1$
it must be $\alpha_{s_1, ..., s_t,i}=0$ and $K(n)$ is generated as an algebra by $K(1)$.
\qed\end{proof}

\begin{rem} A coideal subalgebra datum depends whether it is on $H$ or $H_{(\chi_1,\chi_2)}$.
Since $V_1, V_2$ are 1-dimensional vector spaces we can give a description
of all possible coideal subalgebra data. 
Let $( W^1, W^2, W^3, F)$ be a coideal subalgebra datum (either for
$H$ or $H_{(\chi_1,\chi_2)}$ ). Then $W^3$ is either
null or 1-dimensional. If $W^3\neq 0$ then, using Remark \ref{remark-coi1}
we get that $W^1= W^2=0$ and
$$( W^1, W^2, W^3)=(0,0, < \xi\, x + y>_{\ku})$$
for some $0\neq\xi\in \ku$. We call this coideal subalgebra datum
of \emph{type} $\xi$.

If $W^3=0$ then
$$( W^1, W^2, W^3)=(<\delta_1 \, x>_{\ku} , <\delta_2 \, y>_{\ku}, 0)$$
for some $\delta_1,\delta_2\in \{0,1\}$.  We call this coideal subalgebra datum
of \emph{type} $(\delta_1,\delta_2)$.
 We have already observed that if $W^3\neq 0$, then $(g,g)\in F$.

Assume  $( W^1, W^2, W^3, F)$ is a coideal subalgebra datum for $H_{(\chi_1,\chi_2)}$.
Let $f=(g^i, g^j)\in F$. Since $F$ leaves invariant
the subspace $W^3$, then
$$f\cdot (\xi\, x + y)= \xi q^i \chi_1(f)\, x + q^j \chi_2(f)\, y \in  < \xi\, x + y>_{\ku}.$$
Hence
\begin{equation}\eqlabel{coi-stab-F} q^i \chi_1(f)=q^j \chi_2(f)
\end{equation}
for all $f\in F$. In particular, if $\chi_1=\chi_2$, then $i=j$.
 Therefore $F$ contains the cyclic group generated by $(g,g)$.
\end{rem}

\subsection{ Families of $T_q\ot T_{q^{-1}}$-comodule algebras}

We shall introduce families of non-equivalent right $H$-simple
left $H$-comodule algebras and \emph{a fortiori}, families of
exact  indecomposable  $\Rep(H)$-module categories, where $H=T_q\ot T_{q^{-1}}$.  
 We shall define them by generators and relations extending the information 
from the coideal subalgebras from the previous section. It will turn out that the former 
families are { liftings} of the latter.  
\medbreak

\begin{defn} Given a subgroup $F\subseteq \Zz_{n}\times \Zz_{n}$  we shall say that a 2-cocycle
$\psi\in Z^2(\Zz_{n}\times \Zz_{n},\ku^{\times})$ is \emph{compatible} with $F$ if

\begin{equation}\eqlabel{compatib-psi} q^i\,  \frac{\psi((g,1), f)} {\psi(f, (g,1))} = q^j\,
\frac{\psi((1,g^{-1}),f)} {\psi(f, (1,g^{-1}))}
\end{equation}
for any $f= (g^i, g^j)\in F$. We shall say that
 a 2-cocycle
$\psi\in Z^2(F,\ku^{\times})$ is \emph{compatible} with $F$ if the corestriction 
(see \cite[p. 81]{Br}) 
 of 
$\psi$ in $ Z^2(G,\ku^{\times})$ satisfies
\equref{compatib-psi}.
\end{defn}

\begin{rem} Equation \equref{compatib-psi} is obtained by replacing
the values of $\chi_1, \chi_2$ given in  \equref{charact-psi}, using $\psi^{-1}$, in
equation \equref{coi-stab-F}.

\end{rem}

Let us introduce five families of left $H$-comodule algebras.
\begin{itemize}
 \item Let $F\subseteq \Zz_{n}\times \Zz_{n}$ be a subgroup such that $(g,g)\in F$, $\psi\in
Z^2(F,\ku^{\times})$ a 2-cocycle compatible with $F$, $\xi, \mu\in\ku$ with $\xi\neq 0$. Set $\ele(\xi,\mu, F,\psi)$ for the algebra generated by elements
$\{w,e_f: f\in F\}$ subject to relations
$$w^n=\mu 1, \quad e_f e_{f'}=\psi(f, f')\, e_{ff'},  \quad e_f w=\tau_f\,  w e_f.$$
Here $\tau_f= q^i$ if $f=(g^i, g^j)$.
The left comodule structure $\lambda:\ele(\xi,\mu, F,\psi)\to H\otk \ele(\xi,\mu, F,\psi)$
is defined by
$$\lambda(e_f)=f\ot e_f, \quad \lambda(w)= \xi\, x\ot 1+ y (g,g)\ot e_{(g,g)} + (g,1)\ot w.$$

\item Let $a,b,\xi\in\ku$, $F\subseteq G$ a subgroup, $\psi\in Z^2(F,\ku^{\times})$.
Set $\Kc_{11}(a,b,\xi, F,\psi)$ for the algebra generated by elements
$\{z,u,e_f : f\in F\}$ subject to relations
$$z^n= a\, 1, \quad u^n=b\, 1, \quad zu - uz = \xi\, e_{(g,g^{-1})}, \quad e_f e_{f'}=
\psi(f,f')\, e_{ff'},$$
$$ e_{(g^i,g^j)} z= q^i\, z  e_{(g^i,g^j)}, \quad e_{(g^i,g^j)} u=q^j\, w e_{(g^i,g^j)}
\quad \text{ if } (g^i,g^j)\in F.$$
If $(g,g^{-1})\notin F$ then $\xi=0$. The coaction
$\lambda: \Kc_{11}(a,b,\xi, F,\psi)\to H\otk \Kc_{11}(a,b,\xi, F,\psi)$ is defined by
$$\lambda(e_f)=f\ot e_f, \quad  \lambda(z)= x\ot 1 + (g,1)\ot z,\quad
 \lambda(u)=y\ot 1 + (1,g^{-1})\ot u.$$
\item The algebra $\Kc_{01}(a, F,\psi)$ is the subcomodule algebra of
$\Kc_{11}(a,b,\xi, F,\psi)$  generated by elements
$\{z,e_f : f\in F\}$.

\item The algebra $\Kc_{10}(b, F,\psi)$ is the subcomodule algebra of
$\Kc_{11}(a,b,\xi, F,\psi)$  generated by elements
$\{u,e_f : f\in F\}$.

\item Let $F\subseteq \Zz_{n}\times \Zz_{n}$ be a subgroup, $\psi\in
Z^2(F,\ku^{\times})$ a 2-cocycle then $\ku_\psi F$ is the twisted group algebra.
\end{itemize}

\begin{rem} The first family of comodule algebras is related to the
coideal subalgebra datum of type $\xi$ and the other four families
are related to the
coideal subalgebra datum of type $(1,1)$, $(0,1)$, $(1,0)$ and  $(0,0)$ respectively.
\end{rem}

\begin{lma}\label{r-simp} The  algebras $\ele(\xi,\mu, F,\psi), $ $\Kc_{11}(a,b,\xi, F,\psi),$
$\Kc_{01}(a, F,\psi),$ $\Kc_{10}(b, F,\psi)$ are  right $H$-simple left $H$-comodule algebras with trivial
coinvariants.
\end{lma}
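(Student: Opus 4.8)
The plan is to verify directly that each of these four algebras satisfies the two properties: $H$-simplicity (no nontrivial $H$-costable ideals) and triviality of the coinvariants. The key observation is that each of these algebras is a \emph{lifting} of one of the homogeneous coideal subalgebras classified in \thref{class-hcoid}, in the sense that the associated graded algebra (with respect to the coradical filtration induced on the comodule algebra by the coradical filtration of $H$) is isomorphic to the corresponding coideal subalgebra $C(W^1,W^2,W^3,F)$ (for $\ele$, the type-$\xi$ datum; for $\Kc_{11}, \Kc_{01}, \Kc_{10}$ the data of type $(1,1), (0,1), (1,0)$). So the first step is to make the filtration explicit: assign degree $0$ to the group-like generators $e_f$ and degree $1$ to the ``skew-primitive'' generators $w$, $z$, $u$, check that the defining relations are filtered, and identify $\gr \ele(\xi,\mu,F,\psi)\simeq C(0,0,\langle\xi x+y\rangle,F)$ and likewise for the three $\Kc$-families. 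Since the coaction $\lambda$ is itself filtered, this identification is one of graded comodule algebras.

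The second step is to reduce both properties to the graded level. For triviality of coinvariants: if $A^{\co H}\neq\ku$, pick a homogeneous nonzero coinvariant of minimal positive degree; its image in $\gr A$ is a nonzero coinvariant of $\gr A$ of positive degree, contradicting that the coideal subalgebras have trivial coinvariants (which follows because a left coideal subalgebra $K$ of $H$ with $K\subsetneq H$ satisfies $K^{\co H}=\ku$ — indeed any coinvariant of $K$ is a coinvariant of $H$, and $H^{\co H}=\ku$). For $H$-simplicity: by \cite[Theorem 6.1]{Sk} (quoted in the excerpt) each coideal subalgebra $C(W^1,W^2,W^3,F)$ is $H$-simple; then a standard filtered-to-graded argument shows a lifting of an $H$-simple graded comodule algebra is again $H$-simple. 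Concretely, if $I\subseteq A$ is a nonzero $H$-costable ideal, equip $I$ with the induced filtration; $\gr I$ is a nonzero $H$-costable ideal of $\gr A$, hence equals $\gr A$ by $H$-simplicity of $\gr A$, and by the usual filtration argument this forces $I=A$.

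The third step is to carry out the base cases carefully, i.e.\ to confirm that the coideal subalgebras $C(0,0,\langle\xi x+y\rangle,F)$ and $C(\delta_1 x,\delta_2 y,0,F)$ are indeed $H$-simple with trivial coinvariants — but this is exactly what \cite[Theorem 6.1]{Sk} and the coinvariants remark above give us, so no new computation is needed beyond checking that these algebras are genuinely coideal subalgebras of $H$, which is the content of the Lemma preceding \thref{class-hcoid}. Finally, one should note that $\Kc_{01}$ and $\Kc_{10}$ are defined as subcomodule algebras of $\Kc_{11}$, so their simplicity does not formally follow from that of $\Kc_{11}$; instead one runs the same filtered argument with base the coideal subalgebras of type $(0,1)$ and $(1,0)$.

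I expect the main obstacle to be the bookkeeping in the filtered-to-graded reduction for $H$-simplicity: one must check that the filtration on $A$ induced by the coradical filtration of $H$ via $\lambda$ is exhaustive, multiplicative, and compatible with $\lambda$, and that passing to $\gr$ does not collapse a nonzero ideal — the standard argument needs $\gr A$ to be the honest associated graded, which relies on the defining relations being \emph{exactly} the filtered lifts of the relations in $C(W^1,W^2,W^3,F)$ with no hidden lower-order obstructions. Verifying that the listed relations (e.g.\ $zu-uz=\xi e_{(g,g^{-1})}$, $w^n=\mu 1$) are consistent — that the algebras have the expected dimension $|F|\cdot n^{\dim W}$ — is where the real work lies, and this is most cleanly done by exhibiting an explicit PBW-type basis $\{e_f w^k\}$ (resp.\ $\{e_f z^k u^l\}$) and checking the coaction is well-defined on it.
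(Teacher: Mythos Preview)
Your overall strategy---reduce to the associated graded and invoke $H$-simplicity there---is sound, and the filtered-to-graded arguments in your second step are correct. However, your first step contains an error: the identification $\gr \ele(\xi,\mu,F,\psi)\simeq C(0,0,\langle\xi x+y\rangle,F)$ fails whenever $\psi$ is nontrivial. Passing to $\gr$ kills the relation $w^n=\mu\,1$ (replacing it by $w^n=0$) but leaves the degree-$0$ relations $e_fe_{f'}=\psi(f,f')\,e_{ff'}$ intact, so $\gr\ele(\xi,\mu,F,\psi)=\ele(\xi,0,F,\psi)$ still has the twisted group algebra $\ku_\psi F$ as its degree-$0$ piece, whereas the honest coideal subalgebra $C(0,0,\langle\xi x+y\rangle,F)\subseteq H$ has the untwisted $\ku F$. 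The same discrepancy occurs for the three $\Kc$-families. Your route can be repaired via Lemma~\ref{tw-coidl}: $\ele(\xi,0,F,\psi)$ is a cocycle twist of a coideal subalgebra of $H^{[\sigma]}$, and twisting by a Hopf $2$-cocycle preserves both $H$-simplicity and triviality of coinvariants. With that extra step your argument goes through, but the ``main obstacle'' you anticipated (the PBW bookkeeping) is not where the difficulty lies.

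The paper's proof is much shorter and sidesteps all of this: it simply records that the degree-$0$ term of each algebra is $\ku_\psi F$ and invokes \cite[Prop.~4.4]{M1}, which gives directly that a left comodule algebra over a pointed Hopf algebra whose bottom filtration piece is a twisted group algebra is right $H$-simple with trivial coinvariants. The idea behind that proposition is that any nonzero $H$-subcomodule of $A$---in particular any nonzero costable ideal, or the coinvariants---must meet $A_0$ nontrivially (simple comodules over a pointed Hopf algebra are one-dimensional), reducing both claims to the elementary verification for $\ku_\psi F$. This avoids identifying all of $\gr A$, the cocycle-twist correction, and the appeal to Skryabin.
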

\begin{proof} It is enough to note that
$$\ku_\psi F=\ele(\xi,\mu, F,\psi)_0=\Kc_{11}(a,b,\xi, F,\psi)_0
=\Kc_{01}(a, F,\psi)_0=\Kc_{10}(b, F,\psi)_0$$ and
use  \cite[Prop. 4.4]{M1}.\qed
\end{proof}

Let $\psi\in Z^2(\Zz_{n}\times \Zz_{n},\ku^{\times})$ and $\sigma_\psi:H\otk H\to \ku$ be the associated Hopf
2-cocycle.  Let $\chi_1, \chi_2$ be the characters in $\Zz_{n}\times \Zz_{n}$ defined
in \equref{charact-psi} and let $( W^1, W^2, W^3, F)$ be a coideal subalgebra datum for $H_{(\chi_1,\chi_2)}$.

\begin{lma}\label{tw-coidl} If $W^3=0$   there is an isomorphism of comodule algebras
$$C_{(\chi_1,\chi_2)}(W^1, W^2, 0, F)_{\sigma_\psi^{-1}}\simeq \Kc_{ij}(0,0,0, F,\psi^{-1}) $$ for some
$i,j \in \{0,1\}$.   If $W^3\neq 0$ then
$C_{(\chi_1,\chi_2)}(0, 0, W^3, F)_{\sigma_\psi^{-1}}\simeq \ele(\xi,0 , F,\psi^{-1})$
 for some $\xi\in \ku^{\times}$.\qed
\end{lma}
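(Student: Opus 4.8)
The plan is to verify the isomorphism by writing down an explicit algebra map on generators and checking that it respects both the algebra relations and the comodule structures. First I would recall from \leref{lift-twist} and the description of $\sigma_\psi$ that, because $\sigma_\psi$ is supported on $H(0)\otimes H(0)=\ku G\otimes\ku G$, the twisted product $\cdot_{\sigma_\psi^{-1}}$ on a homogeneous coideal subalgebra $C_{(\chi_1,\chi_2)}(W^1,W^2,W^3,F)$ changes only the relations among the group-like elements $f\in F$ (turning $\ku F$ into $\ku_{\psi^{-1}}F$) and the cross-relations $fx$, $fy$, $f[w]$, while leaving $x^n=0$, $y^n=0$ untouched. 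So after twisting, the relations of the twisted coideal subalgebra match precisely those in the definition of $\Kc_{ij}(0,0,0,F,\psi^{-1})$ or $\ele(\xi,0,F,\psi^{-1})$ with the parameters $a=b=\mu=0$ (since $x^n=y^n=0$ and the internal commutator $zu-uz$ or $[w]$-data is inherited from $H_{(\chi_1,\chi_2)}$).

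Next I would split into the two cases dictated by \rmref{remark-coi1}. If $W^3=0$, then $(W^1,W^2)=(\langle\delta_1 x\rangle,\langle\delta_2 y\rangle)$ with $\delta_i\in\{0,1\}$, so $C_{(\chi_1,\chi_2)}(W^1,W^2,0,F)$ is generated by $\ku F$ together with (possibly) $x$ and (possibly) $y$; set $i=\delta_1$, $j=\delta_2$. I would send $e_f\mapsto f$, $z\mapsto x$, $u\mapsto y$ (only the relevant generators when $\delta_i=0$), and check that in $C_{(\chi_1,\chi_2)}(\cdots)_{\sigma_\psi^{-1}}$ one has $e_f\cdot_{\sigma_\psi^{-1}}e_{f'}=\psi^{-1}(f,f')e_{ff'}$, that $e_{(g^i,g^j)}\cdot_{\sigma_\psi^{-1}}z=q^i\,z\cdot_{\sigma_\psi^{-1}}e_{(g^i,g^j)}$ using \eqref{sigma-product} together with the definition of the action $\rhd$ on $H_{(\chi_1,\chi_2)}$ and the formula \equref{charact-psi} for $\chi_1,\chi_2$, and similarly with $q^j$ for $u$; the commutator $zu-uz$ vanishes here since in $H_{(\chi_1,\chi_2)}$ one has $xy=\chi_2(g,1)yx$ and twisting by $\sigma_\psi^{-1}$ exactly cancels the $\chi_2$-factor (this is where \eqref{charact-psi} does its work), so the image lands in $\Kc_{ij}(0,0,0,F,\psi^{-1})$. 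If $W^3\neq0$, then by \rmref{remark-coi1} $W^1=W^2=0$ and $W^3=\langle\xi x+y\rangle$ with $(g,g)\in F$; I would send $e_f\mapsto f$ and $w\mapsto [\,\xi x+y\,]=\xi x+y(g,g)$, and use \equref{x-y}--\equref{tilde-x-y} and \rmref{corchetes} to check $w^n=0$ and the cross-relation $e_f\cdot_{\sigma_\psi^{-1}}w=q^i\,w\cdot_{\sigma_\psi^{-1}}e_f$.

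In both cases I would then confirm the comodule-algebra axiom: the map $C_{(\chi_1,\chi_2)}(\cdots)_{\sigma_\psi^{-1}}\to H^{[\sigma_\psi]}$, and hence the composite landing in $H=T_q\otimes T_{q^{-1}}$ via the isomorphism $H^{[\sigma_\psi]}\simeq H_{(\chi_1,\chi_2)}$, intertwines the coaction inherited from $H_{(\chi_1,\chi_2)}$ with the coaction written in the definitions of $\ele$ and $\Kc_{ij}$. Concretely, $\Delta$ of $[\,\xi x+y\,]$ given by \equref{x-y} becomes $\xi x\otimes1+y(g,g)\otimes e_{(g,g)}+(g,1)\otimes w$ after applying $(\pi\otimes\id)$ and identifying $y(g,g)$ with the group-like twist — matching $\lambda(w)$ exactly; and for $z,u$ the coactions $x\otimes1+(g,1)\otimes z$, $y\otimes1+(1,g^{-1})\otimes u$ are immediate from $\Delta(x),\Delta(y)$. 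Finally I would note that the map is bijective by comparing dimensions: both sides are free of rank $\dim\ku F$ over the span of the $w$-monomials (resp.\ $z,u$-monomials), which has the same dimension $n$ (resp.\ $1$, $n$, or $n^2$) on either side, and the map sends a basis to a basis. The main obstacle I expect is purely bookkeeping: tracking the precise powers of $q$ and the cocycle values $\psi^{\pm1}$ through formula \eqref{sigma-product} and the passage $H^{[\sigma_\psi]}\simeq H_{(\chi_1,\chi_2)}$, and making sure the internal-commutator parameter $\xi$ in $\Kc_{11}$ is forced to $0$ here (which it is, since $W^1\oplus W^2\oplus W^3$ with $W^3=0$ contributes no $e_{(g,g^{-1})}$-term) — there is no deep difficulty, only the risk of sign/scalar errors, so I would organize the computation by first settling the group part, then the $x,y$ cross-relations, then the coaction.
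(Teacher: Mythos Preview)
Your approach is correct and is exactly the verification one would carry out; the paper itself offers no proof at all (the lemma is stated with an immediate \qed), so there is nothing to compare against. Your outline --- identify generators, check that the $\sigma_\psi^{-1}$-twisted relations in $C_{(\chi_1,\chi_2)}(W^1,W^2,W^3,F)$ coincide with the defining relations of $\Kc_{ij}(0,0,0,F,\psi^{-1})$ or $\ele(\xi,0,F,\psi^{-1})$, match the coactions via \equref{x-y}, and finish by a dimension count --- is the natural and complete argument.

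Two small points of phrasing to tidy up. First, the twisted coideal subalgebra is an $H$-comodule algebra (not an $H^{[\sigma_\psi]}$-comodule algebra): since $H_{(\chi_1,\chi_2)}=H^{[\sigma_\psi]}$ and twisting by $\sigma_\psi^{-1}$ brings you back to $H$, the coaction already lands in $H\otimes(-)$ with the same underlying comultiplication, so no ``composite via the isomorphism $H^{[\sigma_\psi]}\simeq H_{(\chi_1,\chi_2)}$'' is needed. Second, your sentence ``so the image lands in $\Kc_{ij}(0,0,0,F,\psi^{-1})$'' has the direction reversed: you are defining a map \emph{from} $\Kc_{ij}$ (or $\ele$) \emph{to} the twisted coideal subalgebra, and what you are checking is that this assignment on generators respects the relations of the source, hence is a well-defined algebra map. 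With those cosmetic fixes the write-up would be clean.
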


\subsection{Classification of exact  module categories over  $\Rep(T_q\ot T_{q^{-1}})$}

In this section we give a classification of exact indecomposable $\Rep(T_q)$-bimodule categories.
This is a new result, interesting in itself.

\begin{thm}\thlabel{ex-modcat} Let $\mo$ be an exact indecomposable 
$\Rep(T_q)$-bimodule category
 then $\mo$ is equivalent to one of the following
categories:
\begin{itemize}
\item ${}_{\ku_\psi F}\mo$ for some subgroup $F\subseteq G$ and $\psi\in Z^2(F,\ku^{\times})$;
 \item ${}_{\ele(\xi,\mu, F,\psi)}\mo$ for some subgroup $F\subseteq G$ such that $(g,g)\in F$ and
$\psi\in Z^2(F,\ku^{\times})$ is compatible with $F$,
 $\xi, \mu\in\ku$ with $\xi\neq 0$;
 \item ${}_{\Kc_{11}(a,b,\xi, F,\psi)}\mo$ for some $a,b,\xi\in\ku$, $F\subseteq G$ a subgroup, $\psi\in Z^2(F,\ku^{\times})$;
 \item ${}_{\Kc_{01}(a, F,\psi)}\mo$ for some $a\in\ku$, $F\subseteq G$ a subgroup, $\psi\in Z^2(F,\ku^{\times})$;
\item  ${}_{\Kc_{10}(b, F,\psi)}\mo$ for some $a\in\ku$, $F\subseteq G$ a subgroup, $\psi\in Z^2(F,\ku^{\times})$.
\end{itemize}

\end{thm}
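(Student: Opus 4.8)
The plan is to reduce \thref{ex-modcat} to the classification of homogeneous coideal subalgebras of $H=T_q\ot T_{q^{-1}}$ (Theorem \ref{class-hcoid}) together with the machinery of cocycle twists and liftings. As explained in the introduction, an exact indecomposable $\Rep(T_q)$-bimodule category is the same as a left $\Rep(H)$-module category, hence is equivalent to ${}_S\mo$ for a right $H$-simple left $H$-comodule algebra $S$ with trivial coinvariants (by \cite[Theorem 3.3]{AM}). So the whole statement is equivalent to: every such $S$ is, up to equivalence of module categories (i.e., up to Morita equivalence of comodule algebras, which includes twisting by Hopf $2$-cocycles), isomorphic to one of the five families $\ku_\psi F$, $\ele(\xi,\mu,F,\psi)$, $\Kc_{11}$, $\Kc_{01}$, $\Kc_{10}$.

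\textbf{Step 1: reduce to the homogeneous case.} Given $S$ as above, pass to the associated graded comodule algebra $\gr S$ with respect to the Loewy (coradical) filtration; $\gr S$ is again $H$-simple with trivial coinvariants and is now a \emph{homogeneous} object, in fact a homogeneous left coideal subalgebra of $H$ possibly after a Hopf $2$-cocycle twist of $H$ itself. Concretely, by \cite[Lemma 4.1]{GM} (our \leref{lift-twist}) and the preceding analysis, $\gr S$ is of the form $C_{(\chi_1,\chi_2)}(W^1,W^2,W^3,F)$ inside some $H_{(\chi_1,\chi_2)}\simeq H^{[\sigma_\psi]}$; then by \leref{tw-coidl} its untwist $C_{(\chi_1,\chi_2)}(\dots)_{\sigma_\psi^{-1}}$ is one of $\Kc_{ij}(0,0,0,F,\psi^{-1})$ or $\ele(\xi,0,F,\psi^{-1})$, and since twisting the comodule algebra by a Hopf $2$-cocycle of $H$ does not change the equivalence class of the associated $\Rep(H)$-module category (because it corresponds to an equivalence $\Rep(H^{[\sigma]})\simeq\Rep(H)$ at the level of module categories), we may as well work with these standard models for $\gr S$. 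This uses Theorem \ref{class-hcoid} crucially, and the description of all coideal subalgebra data in the Remark following it (only finitely many "types": $\xi$, $(1,1)$, $(0,1)$, $(1,0)$, $(0,0)$).

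\textbf{Step 2: classify the liftings.} With $\gr S$ pinned down to one of the standard graded models, it remains to determine all comodule algebras $S$ with that given associated graded — the liftings. This is a deformation-theoretic computation: one writes the generators of $S$ as lifts of the homogeneous generators $w$, $z$, $u$, $e_f$ and asks which defining relations ($w^n = ?$, $z^n = ?$, $u^n = ?$, $zu-uz = ?$, the commutation relations with $e_f$) can be deformed to lower-order terms while preserving the comodule structure and $H$-simplicity. The output is exactly the appearance of the parameters $\mu$ in $\ele(\xi,\mu,F,\psi)$ and $a,b$ in $\Kc_{ij}(a,b,\xi,F,\psi)$; one shows no other deformations occur and that these deformed algebras are still $H$-simple with trivial coinvariants (this last is \leref{r-simp}, via $\ku_\psi F$ being the degree-zero part and \cite[Prop.~4.4]{M1}). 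One must also check that distinct parameter values can give equivalent module categories only in the predicted ways, so the list is not redundant beyond what is stated.

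\textbf{Main obstacle.} The genuinely hard part is Step 2, the lifting computation: showing that the list of deformation parameters is \emph{complete} — i.e., that every lift of each standard graded coideal subalgebra (twisted or not) is isomorphic \emph{as a comodule algebra} to a member of the corresponding family, with no extra hidden parameters and no obstructions that would kill some of the claimed ones. This requires a careful filtered/graded argument: deform relations one degree at a time, use that the comodule map must be filtered, and exploit that the top-degree relations ($w^n$, $z^n$, $u^n$) land in a one-dimensional space of coinvariants while the bracket relation $zu-uz$ must land in the span of $e_{(g,g^{-1})}$. The subtlety is that a priori the correcting terms could involve many monomials; one has to rule these out using the $H$-coaction compatibility and the structure of $H$ as a quantum linear space (Nichols algebra of a quantum linear space over $\ku G$), which is what forces the rigid shape of the five families. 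The twisted case $H_{(\chi_1,\chi_2)}$ is handled "mutatis mutandis" as in the proof of Theorem \ref{class-hcoid}.
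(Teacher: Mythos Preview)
Your outline is essentially the paper's proof: reduce via \cite[Thm.~3.3]{AM} to a right $H$-simple comodule algebra $A$ with trivial coinvariants, pass to $\gr A$, twist into $H_{(\chi_1,\chi_2)}$, apply Theorem~\ref{class-hcoid}, untwist via Lemma~\ref{tw-coidl} to identify $\gr A$ itself with one of the zero-parameter models over $H$, and then carry out the lifting computation exactly as you describe in Step~2 (the paper does the $W^3\neq 0$ case in detail, producing the element $w$ and the parameter $\mu$ from $w^n=\mu\,1$). The one ingredient you assert without justification is why the twisted $(\gr A)_{\sigma_\psi^{-1}}$ is actually a homogeneous \emph{coideal subalgebra} of $H_{(\chi_1,\chi_2)}$ rather than merely a Loewy-graded $H$-simple comodule algebra with trivial coinvariants --- the paper supplies this via \cite[Lemma~5.5]{M4}, whereas your citation of \leref{lift-twist} only constructs the cocycle $\sigma_\psi$ and does not give this embedding (and your aside about twisting not changing the module-category class, while true, is not what is being used here).
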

\begin{proof} By Lemma \ref{r-simp} all module categories listed above
are exact indecomposable. Let $\mo$ be an indecomposable exact
 $\Rep(T_q)$-bimodule category, then it is an indecomposable exact 
$\Rep(H)$-module category. By \cite[Thm 3.3]{AM}
there exists a right $H$-simple left comodule algebra with trivial coinvariants $(A,\lambda)$, $\lambda:A\to H\otk A$,
such that $\mo={}_A\mo$ as $\Rep(H)$-modules. Since $H$ is coradically graded then
$\gr A$ is a right $H$-simple left comodule algebra also with trivial coinvariants.
Thus, there exists a subgroup $F\subseteq G$ and $\psi\in  Z^2(F,\ku^{\times})$
such that $\gr A_0=\ku_\psi F$.
\medbreak

Abusing the notation we shall denote by $\psi\in  Z^2(G,\ku^{\times})$ the
2-cocycle such that restricted to $F$  it  equals $\psi$. Since $(\gr A)_{\sigma_\psi^{-1}}$
is a Loewy-graded comodule algebra in $H^{[\sigma_\psi^{-1}]}$, it follows
from \cite[Lemma 5.5]{M4} that $(\gr A)_{\sigma_\psi^{-1}}$ is isomorphic
to a homogeneous coideal subalgebra of $H^{[\sigma_\psi^{-1}]}$.
Let $\chi_1, \chi_2$ be the characters in $G$ defined
in \equref{charact-psi} using $\psi^{-1}$. Then $H^{[\sigma_\psi^{-1}]}= H_{(\chi_1,\chi_2)}$
and by Theorem \ref{class-hcoid} $(\gr A)_{\sigma_\psi^{-1}}= C_{(\chi_1,\chi_2)}(W^1,W^2,W^3,F)$
for some coideal subalgebra datum $(W^1,W^2,W^3,F)$. Then
 $\gr A=C_{(\chi_1,\chi_2)}(W^1,W^2,W^3,F)_{\sigma_\psi}$ and
by Lemma \ref{tw-coidl} there are two options: when $W^3=0$ and $W^3\neq 0$.

\bigbreak

We shall only analyze the case when $W^3\neq 0$, the other case is done similarly.
In this case, by Lemma \ref{tw-coidl},  $\gr A=\ele(\xi,0 , F,\psi)$.

\begin{claim} There exists an element $w\in A$ such that
\begin{equation}\label{co-w} \lambda(w)=  \xi\, x\ot 1+ y (g,g)\ot e_{(g,g)} + (g,1)\ot w.
\end{equation}
\end{claim}

\begin{proof}
Since  $\gr A=\ele(\xi,0 , F,\psi)$ there exists an element $\overline{w}\in A_1/A_0$ such that
$$\overline{ \lambda }(\overline{w})= \xi\, x\ot 1+ y (g,g)\ot e_{(g,g)} + (g,1)\ot \overline{w}.$$
Here $\overline{ \lambda }: \gr A\to H\otk \gr A$ is the coaction induced from $\lambda$,
see for example \cite[Section 4]{M1}.
Let us denote by $w'\in A_1$ a representative element
in the class of $\overline{w}$. By the definition of the induced coaction $\overline{ \lambda }$ there
exists a map $\lambda_1:\gr A\to \ku G\otk  \ku_\psi F$ such that
$$\lambda(w')= \xi\, x\ot 1+ y (g,g)\ot e_{(g,g)} + (g,1)\ot w' + \lambda_1(w').$$
Set $\lambda_1(w')= \sum_{h\in G, f\in F} a_{h,f}\; h\ot e_f$, for some $ a_{h,f}\in \ku$. By the coassociativity of
$\lambda$ we get that $(g,1)\ot \lambda_1(w')+(\id\ot \lambda)  \lambda_1(w')= (\Delta\ot\id)\lambda_1(w')$, whence
$$\lambda_1(w') =\sum_{f\in F, f\neq  (g,1)}
\beta_f\; ((g,1)-f)\ot e_f.$$
Then, if $z= \sum_{f\in F, f\neq  (g,1)}\beta_f\;  e_f\in A_0$ we get that
$ \lambda_1(w') = (g,1)\ot z- \lambda(z).$
The element $w=w'+ z$ satisfies equation \eqref{co-w}.\qed
\end{proof}

It is not difficult to prove that we can choose one $w\in A$ which also
satisfies $e_f w = (f\cdot w) e_f$, where the action $\cdot:F\times W^3\to W^3$
is the restriction of the action of $G$ on $V_1\oplus V_2$. The set $\{f w^i: f\in F, 0\leq i
< n\}$ is a basis for $A$. Since
$$ \lambda(w^n)= 1\ot w^n,$$
there exists $\mu\in \ku$ such that $w^n= \mu 1$. Hence, there is a
projection $\ele(\xi,\mu , F,\psi)\to A$ that must be an isomorphism since both algebras
have the same dimension.\qed
\end{proof}

We shall analize when  module categories listed in \thref{ex-modcat} are equivalent.

\begin{prop}\prlabel{equival-cl} The following statements hold:
\begin{itemize}
 \item[1.] ${}_{\ku_\psi F}\mo\simeq {}_{\ku_{\psi'} F'}\mo$ if and only if  $F=F'$ and $\psi=\psi'$
in $H^2(F,\ku^{\times})$;
 \item[2.]  ${}_{\ele(\xi,\mu, F,\psi)}\mo \simeq {}_{\ele(\xi',\mu', F',\psi')}\mo$
 if and only if $\xi=q^i \xi'$, $\mu=\mu'$ for some $i\in \N$ and $F=F'$ and $\psi=\psi'$
in $H^2(F,\ku^{\times})$;
 \item[3.]  ${}_{\Kc_{11}(a,b,\xi, F,\psi)}\mo\simeq {}_{\Kc_{11}(a',b',\xi', F',\psi')}\mo$
if and only if $(a,b,\xi, F,\psi)= (a',b',\xi', F',\psi')$;
 \item[4.]  for any $(i,j)\in \{(0,1),(1,0)\}$ ${}_{\Kc_{ij}(a, F,\psi)}\mo\simeq {}_{\Kc_{ij}(a', F',\psi')}\mo$
if and only if $(a, F,\psi)=(a', F',\psi')$.
\end{itemize}
\end{prop}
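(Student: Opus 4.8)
The plan is to establish each equivalence by the standard dictionary between exact indecomposable $\Rep(H)$-bimodule categories (equivalently, left $\Rep(H)$-module categories) and right $H$-simple left $H$-comodule algebras with trivial coinvariants, up to an appropriate equivalence of comodule algebras. Concretely, two module categories ${}_A\mo$ and ${}_B\mo$ are equivalent as $\Rep(H)$-module categories if and only if $B$ is obtained from $A$ by a gauge transformation, i.e. a composition of a comodule algebra isomorphism and a twist by a suitable invertible element; since each of the five families listed carries a canonical coradical filtration whose associated graded is one of the homogeneous coideal subalgebra twists classified in \thref{class-hcoid}, we may detect equivalences first on the level of $\gr A$ and then on the level of liftings. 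I would phrase and invoke this criterion once at the start of the proof (citing the relevant statements from \cite{M1}, \cite{M4}) so that each of the four items reduces to a finite bookkeeping problem.

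The key steps, in order, are as follows. First, for item (1), the comodule algebra $\ku_\psi F$ is semisimple with trivial coaction on a group-like basis indexed by $F$; a $\Rep(H)$-module equivalence induces an isomorphism of the associated graded comodule algebras, which on the degree-zero part forces $F=F'$, and the cocycle is then determined up to a coboundary, giving $\psi=\psi'$ in $H^2(F,\ku^\times)$. Conversely, $H^2$-cohomologous cocycles give isomorphic twisted group algebras, hence equivalent module categories. Second, for item (2), an equivalence ${}_{\ele(\xi,\mu,F,\psi)}\mo\simeq{}_{\ele(\xi',\mu',F',\psi')}\mo$ again forces $F=F'$ and $\psi=\psi'$ in $H^2$ by looking at degree zero; then one tracks the distinguished generator $w$ with coaction $\lambda(w)=\xi\,x\ot1+y(g,g)\ot e_{(g,g)}+(g,1)\ot w$. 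Any isomorphism of comodule algebras must send $w$ to a scalar multiple of the corresponding generator modulo lower-degree terms, and the constraint that both satisfy $\lambda(w)=(g,1)\ot w+\dots$ together with compatibility with the $F$-action forces the rescaling factor to be a power $q^i$; the relation $w^n=\mu1$ then shows $\mu=\mu'$. Conversely, rescaling $w\mapsto q^{-i}w$ (which amounts to acting by the group-like $(g^i,\cdot)$ inside $H$, exploiting the $g$-twist construction \equref{g-twisted}) produces the required isomorphism. Third and fourth, for items (3) and (4), the comodule algebras $\Kc_{11}$, $\Kc_{01}$, $\Kc_{10}$ have generators $z,u$ whose coactions $\lambda(z)=x\ot1+(g,1)\ot z$, $\lambda(u)=y\ot1+(1,g^{-1})\ot u$ are \emph{rigid}: there is no nontrivial group-like in $H$ that rescales $x$ or $y$ while fixing the relevant grouplikes, so every comodule algebra isomorphism fixes $z$ and $u$ up to lower-order terms with no free scalar, whence $a=a'$, $b=b'$, $\xi=\xi'$, and again $F=F'$, $\psi=\psi'$ on the nose (not merely up to $H^2$, because the presence of $z,u$ rigidifies the cocycle representative via the commutation relations $e_{(g^i,g^j)}z=q^iz e_{(g^i,g^j)}$).

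The main obstacle I anticipate is the ``only if'' direction in items (2)--(4): one must argue that a $\Rep(H)$-module equivalence is not merely an abstract equivalence of categories but is realized by a gauge transformation of comodule algebras, and then control exactly which scalar rescalings of the distinguished generators are permitted. The subtlety is that the equivalence of module categories a priori only gives an equivalence after passing to $\gr$, so one must lift: show that the filtered comodule algebras $A$ and $B$ with $\gr A\cong\gr B$ are themselves isomorphic (or related by the explicit $g$-twist of \delabel{eq-bigal}) precisely under the stated numerical conditions. For the $\ele$ family this is where the power-of-$q$ ambiguity genuinely appears and must be pinned down by comparing the $F$-action data \equref{coi-stab-F}; for the $\Kc$ families the point is rather to verify rigidity, i.e. that no such ambiguity exists. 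I would handle the lifting by the argument already used in the proof of \thref{ex-modcat}: write down the most general element of $A$ mapping to the distinguished generator of $\gr B$, use coassociativity of $\lambda$ to kill the lower-order correction terms up to an inner modification by an element of $A_0=\ku_\psi F$, and conclude. The remaining computations (checking the relations, dimensions, and that the constructed maps are mutually inverse) are routine and I would only sketch them.
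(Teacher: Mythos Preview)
Your approach is essentially the same as the paper's: reduce the equivalence of module categories to an isomorphism of comodule algebras up to conjugation by a group-like, then compute what such a conjugation does to the parameters. The paper is far terser than your outline---it proves only item (2), invokes \cite[Thm.~4.2]{GM} (not \cite{M1} or \cite{M4}) for the criterion that ${}_A\mo\simeq{}_B\mo$ as $\Rep(H)$-modules iff $A\simeq B^f$ for some $f\in G(H)$, checks directly that $\ele(\xi',\mu',F',\psi')^f=\ele(q^i\xi',\mu',F',\psi')$, and declares the remaining items analogous. Your graded/lifting scaffolding is therefore more machinery than the paper deploys, though it is not wrong.

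Two points in your sketch need adjustment. First, your ``rigidity'' explanation for items (3)--(4) is misstated: the $f$-twist \emph{does} rescale $x$ and $y$ in the coactions of $z,u$; the reason the parameters $a,b$ survive unchanged is that after absorbing the rescaling into $z,u$ the $n$-th powers are multiplied by $q^{ni}=1$, not that no rescaling occurs. You should redo that computation rather than assert rigidity. Second, your claim that in items (3)--(4) one gets $\psi=\psi'$ ``on the nose, not merely up to $H^2$'' is not what is meant; the equality of tuples in the statement should still be read with $\psi$ taken in $H^2(F,\ku^\times)$, as in items (1)--(2), and your proposed mechanism (that the commutation relations with $z,u$ pin down a cocycle representative) does not survive the freedom to rescale the $e_f$ by a character of $F$.
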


\begin{proof} We shall only prove  (2). The proofs
of the other statements are analogous. It is not difficult to prove that if there is an isomorphism
of left $H$-comodule algebras
  $\ele(\xi,\mu, F,\psi)\simeq \ele(\xi',\mu', F',\psi'),$ then 
$\xi=q^a \xi'$, $\mu=\mu'$, for some $a\in \N$ and $F=F'$, $\psi=\psi'$.
\medbreak

It follows from \cite[Thm. 4.2]{GM} that ${}_{\ele(\xi,\mu, F,\psi)}\mo \simeq {}_{\ele(\xi',\mu', F',\psi')}\mo$
if and only if there exists $f\in G$ such that $\ele(\xi,\mu, F,\psi)\simeq \ele(\xi',\mu', F',\psi')^f$ as
left $H$-comodule algebras, where the latter comodule algebra has the structure as in \equref{g-twisted}.
One readily obtains that if $f\in G$ then $\ele(\xi',\mu', F',\psi')^f=\ele(q^i\xi',\mu', F',\psi')$  for some $i\in \N$.\qed
\end{proof}

One of the consequences of the above is the classification of $T_q$-biGalois objects. The
classification was already obtained by Schauenburg, see \cite{S}.
\begin{cor}  If $A$ is a $T_q$-biGalois object then $A\simeq \ele(\xi,\mu, \diag(G),1)$ as
$T_q$-bicomodule algebras for some $0\neq\xi, \mu\in  \ku$. 
\end{cor}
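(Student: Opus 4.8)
The plan is to feed the comodule algebra underlying a biGalois object into the classification machinery of \thref{ex-modcat} and then isolate, among the five resulting families, the only one that can support a \emph{two-sided} Galois structure.

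First I would note that a $T_q$-biGalois object $A$ is in particular a $(T_q,T_q)$-bicomodule algebra, hence a left $H=T_q\ot T_{q^{-1}}$-comodule algebra via the Hopf isomorphism $T_q^{\cop}\simeq T_{q^{-1}}$ of \leref{iso-cop}; under this identification a $(T_q,T_q)$-bicomodule algebra isomorphism is exactly a left $H$-comodule algebra isomorphism. Moreover $A$ is $H$-simple with trivial $H$-coinvariants: the left $T_q$-coinvariants of a left $T_q$-Galois object already reduce to $\ku$, so $A^{\co H}=\ku$; and a non-trivial $H$-costable ideal $I\subsetneq A$ would give a non-zero quotient $A/I$ on which the canonical map stays surjective, forcing $\dim(A/I)\ge\dim T_q=\dim A$, absurd. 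Thus $A$ meets the hypotheses on the comodule algebra in the proof of \thref{ex-modcat}, and that argument, run verbatim, shows $A$ is isomorphic as a left $H$-comodule algebra to one of $\ku_\psi F$, $\ele(\xi,\mu,F,\psi)$, $\Kc_{11}(a,b,\xi,F,\psi)$, $\Kc_{01}(a,F,\psi)$ or $\Kc_{10}(b,F,\psi)$.

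The next step eliminates every family but the second, using that $A$ is Galois on both sides. The left (resp.\ right) $T_q$-coaction on $A$ is obtained from $\lambda$ by applying $\id\ot\varepsilon$ (resp.\ $\varepsilon\ot\id$) to the two tensor factors of $H$. For $\ku_\psi F$ the coaction $\lambda$ already factors through $\ku G\subseteq H$, so both one-sided $T_q$-coactions factor through the proper Hopf subalgebra $\ku\Zz_n\subsetneq T_q$, whence the image of the corresponding canonical map is never all of $T_q\ot A$ and $A$ is not $T_q$-Galois. For $\Kc_{11}$ the generator $u$ has $\lambda(u)=y\ot 1+(1,g^{-1})\ot u$, so its image under $\id\ot\varepsilon$ is $1\ot u$; hence $u$ lies in the left $T_q$-coinvariants and, since $u^n\in\ku 1$, produces an $n$-dimensional space of coinvariants, contradicting the left $T_q$-Galois condition. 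The same computation applied to the generator $z$, whose coaction is trivial on the second factor of $H$, rules out $\Kc_{01}$ and $\Kc_{10}$. So $A\simeq\ele(\xi,\mu,F,\psi)$ as a left $H$-comodule algebra, with $(g,g)\in F$, $\psi\in Z^2(F,\ku^\times)$ compatible with $F$ and $\xi\ne 0$.

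Finally I would pin down $F$ and $\psi$. From $\dim\ele(\xi,\mu,F,\psi)=|F|\,n$ and $\dim A=\dim T_q=n^2$ we get $|F|=n$, and since $(g,g)\in F$ this forces $F=\langle(g,g)\rangle=\diag(G)$. Because $H^2(\Zz_n,\ku^\times)=0$, the cocycle $\psi$ is a coboundary $\psi=d\beta$, and the rescaling $e_{(g,g)^i}\mapsto\beta_i^{-1}e_{(g,g)^i}$, $w\mapsto\beta_1^{-1}w$ furnishes an isomorphism of left $H$-comodule algebras $\ele(\xi,\mu,\diag(G),\psi)\simeq\ele(\xi\beta_1^{-1},\beta_1^{\,n}\mu,\diag(G),1)$; here $\psi=1$ is indeed compatible with $\diag(G)$, since for a diagonal element the two sides of \equref{compatib-psi} coincide. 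This gives $A\simeq\ele(\xi',\mu',\diag(G),1)$ with $\xi'\ne 0$ and $\mu'\in\ku$, as claimed. The delicate point is the second step: one must unwind the identification of the $(T_q,T_q)$-bicomodule structure with the left $H$-comodule structure accurately enough to know which tensor factor of $H$ each generator's coaction lands in, and then argue that a coaction concentrated in one factor (or in $\ku G$) obstructs the Galois property; the rest is bookkeeping or has already been carried out in \thref{ex-modcat} and \leref{iso-cop}.
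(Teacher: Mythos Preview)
Your proof is correct and follows the same strategy as the paper: show that $A$ is $H$-simple with trivial coinvariants, invoke the classification established in the proof of \thref{ex-modcat}, and then eliminate all families except the $\ele$ family. You supply considerably more detail than the paper (which merely asserts that the elimination ``is easily seen'' and hints at the bicomodule structure); in particular your explicit one-sided coinvariant checks, the dimension count forcing $F=\diag(G)$, and the use of $H^2(\Zz_n,\ku^\times)=0$ to trivialize $\psi$ are all sound.
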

\begin{proof} Let $A$ be a $T_q$-biGalois object. Then $A$ as a left $T_q\otk T_q^{\cop}$-comodule
algebra has no non-trivial $H$-costable ideal. Indeed, let $I\subseteq A$ be 
an $H$-costable ideal. Thus $I$ is a $T_q$-costable ideal and since $A$ is biGalois, this means that
$I=0$ or $I=A$. Then $A$ must be one of the algebras listed above.  If we observe these algebras 
as $T_q$-bicomodules, it is easily seen that the only family of 
algebras that are $T_q$-biGalois is $\ele(\xi,\mu, \diag(G),1)$ for some $0\neq\xi, \mu\in  \ku$ 
(keep in mind that since $T_q$ is finite-dimensional, every biGalois object is isomorphic to $T_q$ as a bicomodule). 
\qed
\end{proof}

We shall denote $\ele(\xi,\mu)=\ele(\xi,\mu, \diag(G),1)$ for any $0\neq\xi, \mu\in  \ku$.
Two biGalois objects $\ele(\xi,\mu), \ele(\xi',\mu')$
are isomorphic if and only if $\mu=\mu', \xi=q^i\xi'$ for some $i\in \N$. 
Recall the definition of the equivalence relation $\sim$ given in \deref{eq-bigal}. 
 As in the proof of \prref{equival-cl} we have that $\ele(\xi,\mu)\sim  \ele(\xi',\mu')$
if and only if $\mu=\mu', \xi=q^i\xi'$ for some $i\in \N$. It is straightforward to see that 
$T_q\simeq\ele(1,0)$. We then have: 

\begin{cor} 
The subgroup $\inbi(T_q)$ from \coref{bigal seq} is trivial 
and the map $\phi: \biga(T_q) \to  \brp(\Rep(T_q)), 
\phi([A])= [{}_A\mo]$ is a group embedding. 
\end{cor}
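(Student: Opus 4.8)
The plan is to read off the result from two facts already established: the classification of $T_q$-biGalois objects in the previous corollary, and the exact sequence of \coref{bigal seq}. Concretely, I would first recall that every $A\in\biga(T_q)$ is isomorphic, as a $T_q$-bicomodule algebra, to $\ele(\xi,\mu)=\ele(\xi,\mu,\diag(G),1)$ for some $\xi\in\ku^{\times}$, $\mu\in\ku$, that $\ele(\xi,\mu)\simeq\ele(\xi',\mu')$ as biGalois objects exactly when $\mu=\mu'$ and $\xi=q^i\xi'$ for some $i\in\N$, that $T_q\simeq\ele(1,0)$, and that (for the relation $\sim$ of \deref{eq-bigal}) $\ele(\xi,\mu)\sim\ele(\xi',\mu')$ if and only if $\mu=\mu'$ and $\xi=q^i\xi'$ for some $i\in\N$. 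All of these are in the material preceding the statement.

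Next I would prove that $\inbi(T_q)$ is trivial. Take $[A]\in\inbi(T_q)$, so by definition $A\sim T_q=\ele(1,0)$. Writing $A\simeq\ele(\xi,\mu)$, the description of $\sim$ forces $\mu=0$ and $\xi=q^i$ for some $i\in\N$. But $\ele(q^i,0)\simeq\ele(1,0)$ as biGalois objects, so $A\simeq T_q$ as biGalois objects, i.e. $[A]$ is the identity class in $\biga(T_q)$. Hence $\inbi(T_q)=\{1\}$.

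Finally I would invoke \coref{bigal seq}. Its proof shows not only that $\inbi(T_q)$ sits inside $\ker\phi$ but, via \thref{bigal-pseudonat}, that $\phi([A])$ is trivial precisely when $A\sim T_q$; thus $\ker\phi=\inbi(T_q)$. Since the latter is trivial, $\phi:\biga(T_q)\to\brp(\Rep(T_q))$, $\phi([A])=[{}_A\mo]$, is injective, hence a group embedding, which is the claim.

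The only real care needed is bookkeeping: keeping straight the two relations on the family $\{\ele(\xi,\mu)\}$ (isomorphism of biGalois objects versus the coarser relation $\sim$), and checking that the kernel of $\phi$ is genuinely $\inbi(T_q)$ rather than merely containing it — both, however, are supplied by \prref{equival-cl}, \thref{bigal-pseudonat} and the proof of \coref{bigal seq}, so no new argument is required. It is worth emphasizing that the conclusion holds even though $T_q$ is not co-quasitriangular, so that the general injectivity criterion obtained from \leref{pseudo-vs-nat} does not apply here and the explicit classification is what makes the embedding work.
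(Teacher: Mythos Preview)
Your proposal is correct and follows essentially the same route as the paper: the paper establishes in the paragraph immediately preceding the corollary that $\ele(\xi,\mu)\sim\ele(\xi',\mu')$ holds under exactly the same conditions as $\ele(\xi,\mu)\simeq\ele(\xi',\mu')$, and that $T_q\simeq\ele(1,0)$, so $\inbi(T_q)$ is trivial and injectivity of $\phi$ follows from \coref{bigal seq}. Your write-up is, if anything, more explicit than the paper, which simply states the corollary after laying out these facts.
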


\subsection{Invertible $\Rep(T_q)$-bimodule categories}

As a consequence of the above results we give an explicit family of invertible 
exact $\Rep(T_q)$-bimodule categories that form a subgroup inside $ \brp(\Rep(T_q))$.
\medbreak

Define the group $\ku^{\times} \ltimes \ku^{+}$ 
with the underlying set $\ku-\{0\} \times \ku$ and product given by
$$(a,b) \cdot (c,d) = (ac, cb+d), $$
for any $(a,b), (c,d)\in \ku^{\times} \times \ku.$ If $\mathbb{G}_n$ 
denotes the subgroup of $\ku^{\times}$ of n-th roots of unity, then
$\ku^{\times}/\mathbb{G}_n \ltimes \ku^{+}$ is a group with product
$$(\overline{a},b) \cdot (\overline{c},d) = (\overline{ac}, c^n b+d), $$
for any $(\overline{a},b), (\overline{c},d)\in \ku^{\times}/\mathbb{G}_n \times \ku.$
The map $\phi:\ku^{\times}/\mathbb{G}_n \ltimes \ku^{+} \to \ku^{\times} \ltimes \ku^{+}$
given by $\phi(\overline{\xi}, \mu)=(\xi^n,\mu)$ is a group isomorphism.
Schauenburg proved that there 
is a group isomorphism $\biga(T_q) \simeq \ku^{\times} \ltimes \ku^{+}$, \cite[Thm. 2.5]{S}. 
We shall give another proof of this result, mainly for two reasons. Our description of biGalois objects
is different from the one in \cite{S} and we also want to show an explicit subgroup
inside $ \brp(\Rep(T_q))$.

\begin{thm}\thlabel{invt-product} Let $\xi, \mu, \xi', \mu'\in \ku$, $\xi',\xi\neq 0$.
 There
is an isomorphism of $T_q\otk T_{q^{-1}}$-comodule algebras
$$ \ele(\xi',\mu')  \Box_{T_q}  \ele(\xi,\mu)  \simeq \ele(\xi'\xi,\xi^n\mu'+ \mu). $$
\end{thm}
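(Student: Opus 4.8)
The plan is to compute the cotensor product $\ele(\xi',\mu')\Box_{T_q}\ele(\xi,\mu)$ explicitly, identify a spanning set, and then produce an isomorphism onto $\ele(\xi'\xi,\xi^n\mu'+\mu)$ by matching generators and relations. First I would recall from \leref{tw-coidl} and the definitions that $\ele(\xi,\mu)=\ele(\xi,\mu,\diag(G),1)$ has basis $\{f\,w^i : f\in\diag(G),\ 0\le i<n\}$ with $w^n=\mu 1$, and as a \emph{right} $T_q$-comodule algebra $\ele(\xi',\mu')$ carries the coaction coming from the diagonal $T_q\otk T_q^{\cop}$-structure; projecting onto the $T_q$-factor via $\pi_{T_q}$ as in \equref{comod-op2} one gets the right $T_q$-coaction $w'\mapsto w'\ot 1 + e_{(g,g)}\ot x$ (reading $x$ as the generator of the $T_q$ tensor-factor), together with $e_f\mapsto e_f\ot \pi_{T_q}(f)$. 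Dually, $\ele(\xi,\mu)$ as a left $T_q$-comodule algebra has $w\mapsto x\ot 1 + (g,1)\ot w$ truncated to $w\mapsto g\ot w + 1\ot(\text{term in }x)$; I would carefully write down both one-sided coactions and then form the equalizer $\ele(\xi',\mu')\Box_{T_q}\ele(\xi,\mu)$ inside $\ele(\xi',\mu')\otk\ele(\xi,\mu)$.

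Next I would exhibit the candidate generators of the cotensor product. Set $W := w'\ot 1 + (\text{correction})\ot w \in \ele(\xi',\mu')\Box_{T_q}\ele(\xi,\mu)$, chosen so that the two $T_q$-coactions agree; the natural guess, dictated by the bicomodule structures above, is $W = w'\ot e_{(g,g)} + e'_{(g,g)}\ot w$ up to normalization by $\xi,\xi'$, and $E_f := e'_f\ot e_f$ for $f\in\diag(G)$. One checks $E_f E_{f'} = E_{ff'}$ (the cocycles are trivial here), $E_f W = q^{i}\,W E_f$ for $f=(g^i,g^i)$ by combining the two commutation relations $e'_f w' = q^i w' e'_f$ and $e_f w = q^i w e_f$, and — this is the computational heart — that $W^n$ is a scalar. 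Using $w'^n=\mu'1$, $w^n=\mu 1$, the $q$-commutation of $w'$ past $e'_{(g,g)}$, and the binomial-type expansion in the $q$-deformed setting (the cross terms vanish because $q$ is a primitive $n$-th root, exactly as in the proof that $[(v_1,v_2)]^n=0$ in \rmref{corchetes}), one should obtain $W^n = (\xi^n\mu' + \mu)\cdot 1$ after the normalization that makes the left $H=T_q\otk T_{q^{-1}}$-coaction on $W$ take the standard form $\xi'\xi\,x\ot 1 + y(g,g)\ot E_{(g,g)} + (g,1)\ot W$. I would then verify that $\lambda(E_f)=f\ot E_f$ and that $\lambda(W)$ has the required shape, using \equref{coact-coprodu-t} for the coaction on a cotensor product.

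Finally, a dimension count closes the argument: the algebra generated by $\{W, E_f : f\in\diag(G)\}$ inside the cotensor product has the presentation of $\ele(\xi'\xi,\xi^n\mu'+\mu)$, so there is a surjective $H$-comodule algebra map $\ele(\xi'\xi,\xi^n\mu'+\mu)\to \ele(\xi',\mu')\Box_{T_q}\ele(\xi,\mu)$; since $\dim\ele(\xi,\mu)=n^2$ for every parameter and the cotensor product of two Galois objects over $T_q$ again has dimension $n^2$ (it is again a biGalois object, being a product in $\biga(T_q)$, cf.\ \leref{inv-examples} and the discussion preceding \coref{bigal seq}), the map is an isomorphism.

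I expect the main obstacle to be the explicit verification that $W^n$ is the scalar $\xi^n\mu'+\mu$ and not merely \emph{some} scalar: this requires bookkeeping of the $q$-powers picked up when moving $w'$ and $e'_{(g,g)}$ past each other $n$ times, and a clean argument that all mixed monomials $w'^{\,a}e'^{\,b}\cdots w^{\,c}$ with $0<a<n$ appearing in the expansion of $W^n$ cancel. The normalization of $W$ (the precise scalars in front of $w'\ot e_{(g,g)}$ and $e'_{(g,g)}\ot w$) must be pinned down first by forcing the left-coaction to be standard, and only then does the coefficient $\xi^n\mu'+\mu$ come out correctly; getting these constants right is the delicate part, everything else being routine.
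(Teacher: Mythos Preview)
Your strategy is exactly the paper's: send the generators of $\ele(\xi'\xi,\xi^n\mu'+\mu)$ to explicit elements $W,E_f$ of the cotensor product, verify the defining relations (in particular $W^{n}=(\xi^{n}\mu'+\mu)\,1$), check the $H$-coaction via \equref{coact-coprodu-t}, and close with a dimension count. Two points need adjusting.

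\medskip

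\textbf{The candidate $W$ is wrong in form, not just in scalars.} With the one-sided coactions you describe (right $T_q$-coaction $w'\mapsto w'\ot 1+e'_{(g,g)}\ot x$ on $\ele(\xi',\mu')$ and left $T_q$-coaction $w\mapsto \xi\,x\ot 1+g\ot w$ on $\ele(\xi,\mu)$), one checks that $\alpha\,w'\ot e_{(g,g)}+\beta\,e'_{(g,g)}\ot w$ lies in the equalizer only when $\alpha=0$: applying $\rho_R\ot\id$ produces a term $\alpha\,w'\ot 1\ot e_{(g,g)}$, while $\id\ot\lambda_L$ produces $\alpha\,w'\ot g\ot e_{(g,g)}$. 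The element the paper uses is
\[
\gamma(w)=\xi\,w'\ot 1+e'_{(g,g)}\ot w,\qquad \gamma(e_f)=e'_f\ot e_f,
\]
with $1$, not $e_{(g,g)}$, in the second slot of the first summand. Your own plan (write down the two coactions and solve the equalizer condition) would find this, but be aware that the discrepancy is structural and no rescaling repairs your guess. With the correct $W$ the $q$-binomial computation of $W^{n}$ is indeed the only nontrivial check, and gives $\xi^{n}\mu'+\mu$ exactly as you anticipate.

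\medskip

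\textbf{Injectivity versus surjectivity, and the dimension count.} You phrase the final step as a surjection onto the cotensor product, but you have only produced a map into it, landing in the subalgebra generated by $W,E_f$. The paper instead observes that $\gamma$ is injective (clear, since the source is an $H$-simple comodule algebra and $\gamma\neq 0$) and computes $\dim\big(\ele(\xi',\mu')\Box_{T_q}\ele(\xi,\mu)\big)$ directly: $T_q$ is finite-dimensional, so every $T_q$-Galois object is cleft and hence isomorphic to $T_q$ as a (one-sided) comodule, giving $\ele(\xi',\mu')\Box_{T_q}\ele(\xi,\mu)\simeq T_q\Box_{T_q}T_q\simeq T_q$ as vector spaces. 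Your alternative (the cotensor product of biGalois objects is again biGalois, hence of dimension $n^{2}$) is equally valid; combined with injectivity it finishes the proof.
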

\begin{proof} Recall that the left $T_q\otk T_{q^{-1}}$-comodule structure
on the cotensor product is given by \equref{coact-coprodu-t}. 
Let
$$\gamma: \ele(\xi'\xi,\xi^n\mu'+ \mu)\to \ele(\xi',\mu')  \Box_{T_q}  \ele(\xi,\mu)$$
be the algebra map determined by
$$ \gamma(w)=\xi\; w\ot 1 + e_{(g,g)}\ot w, \quad \gamma(e_f)=e_{f} \ot e_f,$$
for all $f\in \diag(G)$. Note that we are abusing  the notation by denoting with the same
name the generators of the  algebras $\ele(\xi,\mu), \ele(\xi',\mu')$
and $\ele(\xi'\xi,\xi^n\mu'+ \mu)$. To prove that $ \gamma$ is well-defined we have to verify that
$$ \gamma(w^n)= (\xi^n\mu'+ \mu)\, 1, \quad
\gamma(e_{(g,g)} w)=q \, \gamma(we_{(g,g)}).$$
This is done by a straightforward computation. Also, it can be readily
 proven that the image of $\gamma$ is contained in $\ele(\xi',\mu')  \Box_{T_q}  \ele(\xi,\mu)$
and that $\gamma$ is an injective comodule morphism. To prove that $\gamma$ is bijective,
 we shall prove that
$$\dim(\ele(\xi',\mu')  \Box_{T_q}   \ele(\xi,\mu))=\dim( \ele(\xi'\xi,\xi^n\mu'+ \mu)). $$
Since $T_q$ is finite-dimensional, then any Hopf-Galois object is Cleft. This implies that
$\ele(\xi,\mu)\simeq T_q$
as a right  and left $T_q$-comodules. Hence, there
are linear isomorphisms
$$\ele(\xi',\mu')  \Box_{T_q}  \ele(\xi,\mu)\simeq T_q\Box_{T_q}T_q\simeq T_q.$$
Therefore $\dim(\ele(\xi',\mu')  \Box_{T_q}   \ele(\xi,\mu))=\dim( \ele(\xi'\xi,\xi^n\mu'+ \mu)). $\qed
\end{proof}

As a consequence of the above Theorem
there is a group isomorphism $\ku^{\times} \ltimes \ku^{+} \to \biga(T_q)$  given by 
$(\xi,\mu)\mapsto [\ele(\phi^{-1}(\xi,\mu))]$. 

\begin{cor}
There is an injective group
homomorphism $\alpha:\ku^{\times} \ltimes \ku^{+} \to \brp(\Rep(T_q))$
given by 
$$\alpha(\xi,\mu)= 
 [{}_{\ele(\phi^{-1}(\xi,\mu))}\mo ], \quad (\xi,\mu)\in \ku^{\times} \ltimes \ku^{+}.$$\qed
\end{cor}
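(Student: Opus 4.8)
The plan is to assemble the statement from two ingredients that are already essentially in place: the group embedding $\phi\colon\biga(T_q)\to\brp(\Rep(T_q))$ obtained in the corollary following \prref{equival-cl}, and the group isomorphism $\ku^\times\ltimes\ku^+\to\biga(T_q)$ recorded just before the statement, which itself is a consequence of \thref{invt-product}. So the map $\alpha$ is simply the composition of these two, and injectivity is immediate once one checks that each factor is an injective group homomorphism.

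First I would recall the isomorphism $\rho\colon\ku^\times\ltimes\ku^+\xrightarrow{\ \simeq\ }\biga(T_q)$, $\rho(\xi,\mu)=[\ele(\phi^{-1}(\xi,\mu))]$, where $\phi\colon\ku^\times/\Gg_n\ltimes\ku^+\to\ku^\times\ltimes\ku^+$ is the group isomorphism $(\bar\xi,\mu)\mapsto(\xi^n,\mu)$ defined in the text; the fact that $\rho$ is a well-defined group isomorphism follows from the classification corollary (every $T_q$-biGalois object is some $\ele(\xi,\mu)$, with $\ele(\xi,\mu)\simeq\ele(\xi',\mu')$ iff $\mu=\mu'$ and $\xi=q^i\xi'$) together with the product formula $\ele(\xi',\mu')\Box_{T_q}\ele(\xi,\mu)\simeq\ele(\xi'\xi,\xi^n\mu'+\mu)$ of \thref{invt-product}, which matches the product $(a,b)\cdot(c,d)=(ac,c^nb+d)$ on $\ku^\times/\Gg_n\ltimes\ku^+$ transported along $\phi$. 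Second, I would invoke the previously established embedding $\biga(T_q)\hookrightarrow\brp(\Rep(T_q))$, $[A]\mapsto[{}_A\mo]$: this is the map $\phi$ of \coref{bigal seq}, which is a group homomorphism by \thref{tensorpb}, and whose kernel $\inbi(T_q)$ was shown to be trivial in the corollary after \prref{equival-cl} (using $T_q\simeq\ele(1,0)$ and the description of $\sim$). Composing, $\alpha(\xi,\mu)=[{}_{\ele(\phi^{-1}(\xi,\mu))}\mo]$ is a composition of two injective group homomorphisms, hence an injective group homomorphism, which is exactly the assertion.

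Since both component maps have already been constructed and their injectivity already proven in the preceding corollaries, there is no real obstacle here; the statement is a formal corollary. The only point that deserves a line of care is bookkeeping with the root-of-unity quotient: one must be sure that $\phi^{-1}(\xi,\mu)$ is chosen so that $\rho$ lands in isomorphism classes consistently, i.e. that the ambiguity $\xi\mapsto q^i\xi$ in a representative of $\phi^{-1}(\xi,\mu)\in\ku^\times/\Gg_n\times\ku$ does not change the class $[\ele(\phi^{-1}(\xi,\mu))]\in\biga(T_q)$ — but this is precisely the content of $\ele(\xi,\mu)\simeq\ele(q^i\xi,\mu)$ noted right after \thref{invt-product}. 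I would therefore present the proof in two short sentences: identify $\alpha$ as the composite $\ku^\times\ltimes\ku^+\xrightarrow{\rho}\biga(T_q)\xrightarrow{\phi}\brp(\Rep(T_q))$, and conclude injectivity of $\alpha$ from injectivity of $\rho$ (it is an isomorphism) and of $\phi$ (its kernel $\inbi(T_q)$ is trivial by the corollary after \prref{equival-cl}).
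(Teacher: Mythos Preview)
Your proposal is correct and matches the paper's approach exactly: the corollary is stated with only a \qed, since it follows immediately by composing the group isomorphism $\ku^\times\ltimes\ku^+\xrightarrow{\simeq}\biga(T_q)$ recorded just before with the embedding $\biga(T_q)\hookrightarrow\brp(\Rep(T_q))$ established in the preceding corollary. Your attention to the well-definedness of $\phi^{-1}$ modulo $\mathbb{G}_n$ is a nice touch that the paper leaves implicit.
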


\subsection*{Acknowledgments} The work of B.F. was supported by a postdoctoral fellowship of
PEDECIBA, Uruguay. The work of M.M. was supported by  Secyt (UNC), Mincyt (C\'ordoba)
and CONICET.
The authors want to thank C. Galindo and R. Heluani for  helpful discussions.
We also thank the referee for his many constructive comments.

\bibliographystyle{amsalpha}

\begin{thebibliography}{AE}




 \bibitem{AM}  {\sc N. Andruskiewitsch } and {\sc M. Mombelli}.
{\it On module categories over finite-dimensional Hopf algebras}.
J. Algebra \textbf{314} (2007), 383--418. 


\bibitem{AS} {\sc N. Andruskiewitsch } and
{\sc H.-J. Schneider}, \emph{Lifting of quantum linear spaces and
pointed Hopf algebras of order $p^3$}, J. Algebra \textbf{209}
(1998), 658--691.

\bibitem{Be} {\sc J. B\'enabou}, \emph{Introduction to bicategories}, 
Lecture notes in mathematics \textbf{47} (1967).

\bibitem{Br} {\sc K. Brown}, \emph{Cohomology of groups}, 
New York: Springer-Verlag, 1982. Print.

\bibitem{BFRS}{\sc T. Barmeier, J. Fuchs, I. Runkel,} and {\sc C. Schweigert}.
\emph{On the Rosenberg-Zelinsky sequence in abelian monoidal categories}. J. 
Reine Angew. Math. \textbf{642} (2010), 1--36.

\bibitem{Bo}{\sc F. Borceux},  \emph{Handbook of Categorical Algebra, Basic Category Theory (Encyclopedia of Mathematics and its Applications)}, 
Volume 1, Cambridge University Press 1994. 


\bibitem{CCMT} {\sc S. Caenepeel, S. Crivei, A. Marcus} and {\sc M. Takeuchi}.
\emph{ Morita equivalences induced by bimodules over Hopf-Galois extensions}. J. Algebra \textbf{314} (2007), 267--30

\bibitem{De}  {\sc P. Deligne} \emph{Cat\`{e}gories tannakiennes}.  The Grothendieck
Festschrift, Vol. II,
Progr. Math., \textbf{87}, Birkh\"auser, Boston, MA, 1990, 111--195.

\bibitem{D} {\sc Y. Doi}. \emph{Unifying Hopf modules}. J. Algebra \textbf{153} (1992), 373--385.

\bibitem{DKR}  {\sc A. Davydov,} {\sc L. Kong} and  {\sc I. Runkel}. \emph{
Invertible defects and isomorphisms of rational CFTs}. ZMP-HH/10-13
Hamburger Beitr\"age zur Mathematik (2010).


\bibitem{ENO}  {\sc P. Etingof}, {\sc D. Nikshych} and {\sc V. Ostrik}. \emph{Fusion categories and
 homotopy theory}. Quantum Topol. \textbf{1}, No. 3, (2010) 209--273.


\bibitem{eo} {\sc P. Etingof} and {\sc V. Ostrik}.
\emph{Finite tensor categories}. Mosc. Math. J. \textbf{4} No. 3 (2004), 627--654.

\bibitem{fsv} {\sc J. Fuchs}, {\sc  C. Schweigert} and
{\sc A. Valentino}. \emph{Bicategories for boundary conditions
and for surface defects in 3-d TFT},
Comm.  Math. Physics
 \textbf{321}, Issue 2, (2013) 543--575.

\bibitem{Ga} {\sc C. Galindo}. \emph{Crossed product tensor categories}.
J. Algebra \textbf{337} (2011) 233--252.



\bibitem{GM} {\sc A. Garc\'ia Iglesias} and {\sc M. Mombelli}.
\emph{Representations of the category of modules over pointed Hopf algebras
 over ${\mathbb S}_3$ and ${\mathbb S}_4$}.  Pacific J. Math. vol \textbf{252} No.  2 (2011)
 343--378.

\bibitem{Gr}  \textsc{J. Greenough}. \emph{Monoidal 2-structure of Bimodule Categories}.
  J. Algebra \textbf{324} (2010) 1818--1859.



\bibitem{KK} {\sc A. Kitaev} and  {\sc L. Kong}. \emph{Models for
 gapped boundaries and domain walls}.
Comm.  Math. Physics
\textbf{313}, Issue 2, (2012) 351--373.






\bibitem{M1} {\sc M. Mombelli}. {\it Module categories over
pointed Hopf algebras}. Math. Z. \textbf{266} No. 2 (2010) 319--344.


\bibitem{M3} {\sc M. Mombelli}. \emph{On the tensor product of bimodule categories
over Hopf algebras}. Abh. Math. Semin. Univ. Hamb. \textbf{82}, (2012)  173--192.

\bibitem{M4} {\sc M. Mombelli}.  \emph{The Brauer-Picard group of the
 representation category of finite supergroup algebras}. Preprint arXiv:1202.6238.


\bibitem{S} {\sc P. Schauenburg}. \emph{Bi-Galois objects over the Taft algebras}.
Israel  J. Math. \textbf{115} (2000), 101--123.


\bibitem{S2} {\sc P. Schauenburg}. \emph{Hopf Bigalois extensions}.
Comm. in Algebra \textbf{24} (1996) 3797--3825.


\bibitem{Sk} {\sc S. Skryabin}. \emph{Projectivity and freeness over comodule algebras}.
 Trans. Am. Math. Soc. 359 \textbf{6} (2007) 2597--2623. 


\bibitem{U} {\sc  K. -H. Ulbrich}. \emph{Galois extensions as functors of comodules}.
Manuscr. Math.
 \textbf{59}, Number 4 (1987), 391--397.


\end{thebibliography}

\end{document}